\documentclass{amsart}
\usepackage{amsmath,amssymb,amsthm,newtxtext}
\usepackage{sidecap}
\usepackage{graphics, graphicx}
\usepackage{hyperref}  
\usepackage{mathrsfs}
\usepackage{enumerate}
\usepackage{nicefrac}
\usepackage{tikz}
\usepackage{tikz-cd}
\usepackage{todonotes}
\usepackage{subcaption}

\newtheorem{thm}{Theorem}
\newtheorem{conj}{Conjecture}
\newtheorem{prop}{Proposition}
\newtheorem{lemma}[prop]{Lemma}

\newtheorem{cor}[prop]{Corollary}
\newtheorem{example}[prop]{Example}

\newtheorem*{thm*}{Theorem}
\newtheorem*{alg*}{Algorithm}
\newtheorem*{lemma*}{Lemma}

\theoremstyle{definition}

\theoremstyle{remark}
\newtheorem{rmk}[prop]{Remark}
\newtheorem*{rmk*}{Remark}

\newtheorem*{notation*}{Notation}

\theoremstyle{definition}
\newtheorem{Def}[prop]{Definition}

\renewcommand{\P}{\mathbb{P}}
\newcommand{\R}{\mathbb{R}}
\newcommand{\C}{\mathbb{C}}
\newcommand{\N}{\mathbb{N}}
\newcommand{\Z}{\mathbb{Z}}
\newcommand{\Q}{\mathbb{Q}}

\newcommand{\D}{\mathbb{D}}
\newcommand{\E}{\mathbb{E}}
\newcommand{\U}{\mathbb{U}}

\newcommand{\cD}{\mathcal{D}}

\newcommand{\cS}{\mathcal{S}}

\newcommand{\cM}{\mathcal{M}}

\newcommand{\cT}{\mathcal{T}}

\newcommand{\br}{\mathbf{r}}
\newcommand{\bt}{\mathbf{t}}

\newcommand{\Diag}{\operatorname{Diag}}

\newcommand{\sA}{\mathsf{A}}
\newcommand{\sP}{\mathsf{P}}

\newcommand{\abs}[1]{\left\lvert#1\right\rvert}

\newcommand{\AZ}[2]{\left \langle #1 , #2 \right \rangle}

\title[Areal Weil Heights]{Areal Weil Heights}

\author[Kelley]{Preston Kelley}

\address{Department of Mathematics\\ Oklahoma State University, Stillwater, OK 74078}

\email{preston.kelley@okstate.edu}

\subjclass[2020]{11R06, 11G50, 31A15}

\keywords{Mahler measure, Weil height, equidistribution.}

\date{\today}

\begin{document}

\begin{abstract}
    In 2008, Pritsker introduced the areal Mahler measure, which is defined using an integral over the unit disk, as opposed to the classical Mahler measure which is defined using an integral over the unit circle. In this paper we introduce areal Weil heights, which generalize the areal Mahler measure to the adelic setting. We use the framework of adelic heights established by Favre and Rivera-Letelier and we construct a $p$-adic analog for the area measure on a disk in $\C$. For areal Weil heights we prove an analog of Kronecker's theorem, which characterizes their small points and essential minima. Furthermore, we determine equidistribution theorems for areal Weil heights. In some cases, they have a unique limiting distribution for small points, while in others there are infinitely many limiting distributions. We conclude with examples. In one of our examples, we determine for which radii $r$ there exist sequences of conjugate sets of algebraic integers which uniformly distribute to the disk $D(0,r) \subset \C$, and we compute the limiting height for such sequences.
\end{abstract}

\maketitle

\tableofcontents

\section{Introduction} \label{introduction section}

\subsection{Background and Motivation}

Let $P(z) = a_n (z-\alpha_1)\cdots (z-\alpha_n) \in \C[z]$ be a nonzero complex polynomial and let $\lambda$ be the normalized arc length measure on the unit circle $S^1 \subset \C$, i.e. $$d\lambda = \left . \frac{d\theta}{2\pi} \right |_{S^1}.$$ The \textit{(logarithmic) Mahler measure} of $P$ is the function 
\begin{gather} \label{def of mahler measure}
m(P) = \int \log \abs{P(z)} d\lambda (z) = \log \abs{a_n} + \sum_{k=1}^n \log^+ \abs{\alpha_k},    
\end{gather}
where the latter equality is a result of Jensen's formula and $\log^+ x = \log \max \{ 1, x\}$ for any $x > 0$. If $\alpha \in \overline{\Q}^\times$ is a nonzero algebraic number, then the Mahler measure of $\alpha$ is $m(\alpha) = m(P_\alpha)$, where $P_\alpha$ is the minimal polynomial for $\alpha$ over $\Z$. The Mahler measure is a height function on the space of algebraic numbers and has seen numerous applications in number theory. 

From (\ref{def of mahler measure}) it is immediate that $m(P) \geq \log \abs{a_n}$. Thus for an algebraic number $\alpha$ we have $m(\alpha) \geq 0$. It is then a consequence of a theorem of Kronecker \cite{kronecker} that for $\alpha \in \overline{\Q}^\times$, $m(\alpha) = 0$ if and only if $\alpha$ is a root of unity. By this observation, it is natural to ask how small can $m(\alpha)$ be for $\alpha \in \overline{\Q}^\times$ which is not a root of unity. This is perhaps the most famous open problem for the Mahler measure, and was posed by Lehmer in \cite{lehmer}. Although Lehmer did not conjecture the solution to this problem in \cite{lehmer}, the following is generally referred to as \textit{Lehmer's Conjecture}.

\begin{conj}[Lehmer's Conjecture] \label{lehmers conjecture}
    There exists $\varepsilon > 0$ such that $m(\alpha) \geq \varepsilon$ for every $\alpha \in \overline{\Q}^\times$ which is not a root of unity.
\end{conj}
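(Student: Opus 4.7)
The plan must open with an admission: the statement is Lehmer's problem, the most famous open question concerning the Mahler measure, unresolved since 1933, and so a proof proposal here is at best a direction of attack rather than a blueprint. The strongest unconditional result in the literature is Dobrowolski's inequality $m(\alpha) \gg \bigl( (\log \log d)/\log d \bigr)^3 / d$ for $d = \deg \alpha \geq 2$, and any serious plan must explain how the fatal factor $1/d$ is to be eliminated.

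First I would make the standard reductions. If $\alpha$ is non-reciprocal, Smyth's theorem already gives $m(\alpha) \geq \log \theta_0 \approx 0.2811$, where $\theta_0$ is the real root of $x^3 - x - 1$, so we may assume $\alpha$ is reciprocal. If some Galois conjugate satisfies $\abs{\alpha^\sigma} \geq 1 + \eta$ for fixed $\eta > 0$, then (\ref{def of mahler measure}) gives an absolute lower bound, so we may assume all conjugates of $\alpha$ lie close to the unit circle; Bilu's equidistribution theorem then forces the normalized counting measures on the Galois orbits to converge weakly to $\lambda$. The problem is thereby reduced to the case of reciprocal algebraic units whose conjugates cluster tightly on $S^1$ and equidistribute with respect to $\lambda$.

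Second, I would exploit the adelic picture. The framework of Favre and Rivera-Letelier on which this paper is built shows that a sequence of small points equidistributes not only archimedeanly but also at every finite place in the Berkovich sense. Dobrowolski's argument uses an auxiliary polynomial identity at a single prime $p \asymp \log d$; the plan would be to sum, rather than pick, over many primes and combine the resulting $p$-adic cancellations via the product formula into a global lower bound. A natural idea specific to the present setting is to replace the unit circle by the unit disk and use the areal Mahler measure, together with the $p$-adic disk analog constructed in this paper, as the test objects: the extra integration over the disk produces strictly more cancellation, which one might hope transfers to the classical Mahler measure via a comparison inequality between $m(\alpha)$ and its areal counterpart.

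The main obstacle is the archimedean cost of any auxiliary polynomial. Every construction in the literature produces an $F \in \Z[x]$ whose archimedean norm grows with $\deg \alpha$, and this growth is precisely what injects the factor $1/d$ into Dobrowolski's bound. Producing an auxiliary global object whose archimedean contribution is bounded uniformly in $d$ is, essentially, equivalent to Lehmer's conjecture itself; this is the wall where every known approach halts, and an honest proposal must therefore stop at this obstruction rather than at a theorem.
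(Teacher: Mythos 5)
The statement you were asked to prove is Lehmer's Conjecture, which the paper presents only as a conjecture (Conjecture~\ref{lehmers conjecture}) and never proves; it has been open since 1933 and remains so. There is consequently no ``paper's own proof'' to compare against, and your proposal is right to open by disclaiming that it cannot supply one. Your account of the state of the art---Smyth's unconditional bound $m(\alpha)\geq\log\theta_0$ for non-reciprocal $\alpha$, Dobrowolski's inequality, the reduction via Bilu's theorem to reciprocal units whose conjugates equidistribute on $S^1$, and the identification of the archimedean cost of the auxiliary polynomial as the wall against which every known argument halts---is accurate and matches the literature.

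The one caution I would add concerns the specific ``areal/adelic'' direction you float. The paper's own results cut against it: Proposition~\ref{a sequence fails lehmer} shows that the Lehmer-type statement is in fact \emph{false} for the areal Weil height $h_{\rho_{\mathbf{r}}}$ with $\mathbf{r}=(1)_{v\in S}$, and Pritsker's \cite[Example~1.4]{pritsker-areal} already exhibited the analogous failure for the areal Mahler measure. The extra cancellation you hope to harvest by integrating over the disk rather than the circle is real, but it is so strong that it manufactures small points of its own; a comparison inequality transferring an areal lower bound back to $m(\alpha)$ would have to be sharp enough not to inherit that defect, and no such inequality is known, precisely because the gap $m(P)-m_{\D}(P)$ from (\ref{pritsker areal inequality}) can be as large as $n/2$. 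So the areal objects in this paper are better understood as producing \emph{counterexamples} to Lehmer-type statements in a nearby setting than as a route toward the conjecture itself.
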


The smallest known Mahler measure for a nonzero algebraic integer which is not a root of unity was found by Lehmer in 1933 \cite{lehmer}. For more details on Lehmer's Conjecture \ref{lehmers conjecture}, we refer the reader to \cite{smyth-survey} and \cite{around-the-unit-circle}.

Let $\rho$ be the normalized area measure on the unit disc $\D \subset \C$, i.e. $$d\rho = \left . \frac{dA}{\pi} \right |_\D.$$ In 2008, Pritsker \cite{pritsker-areal} introduced the \textit{areal Mahler measure}, defined by
\begin{gather} \label{expression for areal mahler measure}
    m_\D (P) = \int \log \abs{P(z)} d\rho (z) =  m(P) + \sum_{\abs{\alpha_k} < 1} \frac{\abs{\alpha_k}^2 - 1}{2},
\end{gather}
where the latter equality is proved in \cite[Theorem 1.1]{pritsker-areal}. As a consequence of (\ref{expression for areal mahler measure}), Pritsker \cite[Corollary 1.2]{pritsker-areal} showed that
\begin{gather} \label{pritsker areal inequality}
    -\frac{n}{2} + m(P) \leq m_\D (P) \leq m(P).
\end{gather}
Thus $m_\D$ is very closely related to the classical Mahler measure. In fact, $m_\D$ satisfies a version of Kronecker's theorem: Pritsker proved in \cite[Theorem 1.3]{pritsker-areal} that for $\alpha \in \overline{\Q}^\times$, $m_\D (\alpha) = 0$ if and only if $\alpha$ is a root of unity. Furthermore, Pritsker demonstrated in \cite[Example 1.4]{pritsker-areal} that Lehmer's Conjecture \ref{lehmers conjecture} is false when $m$ is replaced with $m_\D$. This makes $m_\D$ a particularly interesting height to study. Further investigations of the areal Mahler measure can be found in \cite{choi-samuels}, \cite{flammang}, \cite{lalin-roy-1}, \cite{lalin-roy-2}, and \cite{lalin-nair-ringeling-roy}.

Let $h : \overline{\Q} \to \R$ be the \textit{(absolute logarithmic) Weil height} on the space of algebraic numbers, and let $\alpha$ be a nonzero algebraic number of degree $n$ over $\Q$ with minimal polynomial $$P_\alpha (z) = a_n z^n + \cdots + a_0 = a_n (z-\alpha_1) \cdots (z-\alpha_n) \in \Z [z].$$ Then the Mahler measure and the Weil height are related by the formula (see for instance \cite[1.6.6]{bombieri-gubler})
\begin{gather} \label{height and mahler measure relationship}
    h(\alpha) = \frac{1}{n} m(\alpha) = \frac{1}{n} \left ( \log \abs{a_n} + \sum_{k=1}^n \log^+ \abs{\alpha_k} \right ).
\end{gather}
Inspired by the above formula, one may similarly define $h_{\rho_\mathbf{r}} : \overline{\Q} \to \R$ by the following. Let $p_\rho : \C \to [-\infty,\infty]$ be the \textit{potential function} for $\rho$, defined by $$p_\rho (z) = \int \log \abs{z-w} d\rho (w).$$ Then for $\alpha \in \overline{\Q}^\times$, define 
\begin{gather} \label{naive areal height}
    h_{\rho_\mathbf{r}} (\alpha) = \frac{1}{n} m_\D (\alpha) = \frac{1}{n} \left ( \log \abs{a_n} + \sum_{k=1}^n p_\rho (\alpha_k) \right ),
\end{gather}
and furthermore set $h_{\rho_\mathbf{r}} (0) = p_\rho (0) = -\nicefrac{1}{2}$. Combining (\ref{pritsker areal inequality}), (\ref{height and mahler measure relationship}), and (\ref{naive areal height}), we obtain
\begin{gather*}
    -\frac{1}{2} + h(\alpha) \leq h_{\rho_\mathbf{r}} (\alpha) \leq h(\alpha)
\end{gather*}
for any $\alpha \in \overline{\Q}$. This demonstrates that $h_{\rho_\mathbf{r}}$ is a Weil height. The goal of this paper is to study a class of Weil heights which, up to their normalization, generalize $h_{\rho_\mathbf{r}}$. We will refer to heights in this class as \textit{areal Weil heights}. Roughly, areal Weil heights will generalize $h_{\rho_\mathbf{r}}$ by containing ``areal" data at a finite set of places of a number field $K$. At each of the finite places, this data consists of a choice of radius and a local height function which corresponds to this choice. We will construct areal Weil heights using the machinery of adelic heights developed by Favre and Rivera-Letelier in \cite{frl.2}.  After constructing areal Weil heights, we will extend the results of \cite{pritsker-areal} to our setting.

\subsection{Results and Outline of Article} 

We now present the outline of this article and state our main results.

In Section \ref{preliminaries section}, we will cover some technical background that will be used throughout the paper. In particular, we give the definition for adelic measures, adelic heights, and the Arakelov-Zhang pairing.

In Section \ref{definition of areal weil heights} we define areal Weil heights and derive an explicit expression for them. Let $K$ be a number field, let $S \subset M_K$ be a nonempty finite set of places of $K$, and let $\mathbf{r} = (r_v)_{v \in M_K} \in (0,\infty)^S$ be a tuple of radii. For every $v \in M_K$, let $$d_v = \frac{[K_v : \Q_v]}{[K : \Q]}$$ be the local-to-global degree at $v$. We will construct the \textit{areal adelic measure} $\rho_\mathbf{r} = (\rho_{\mathbf{r}, v})_{v \in M_K}$, which is an adelic measure in the sense of Favre and Rivera-Letelier \cite{frl.2}. At the places $v \in S$ such that $v \mid \infty$, the measure $\rho_{\mathbf{r},v}$ is given by the normalized area measure on $r_v \D \subset \C$. At the remaining places $v \in S$, we will construct the $\rho_{\mathbf{r},v}$ so that its potential function has a similar expression to the potential function for the normalized area measure on $r_v \D \subset \C$. We defer the precise definition to Section \ref{definition of areal weil heights}, see Definition \ref{def of areal measure at v}. 

By definition an areal Weil height is an adelic height $h_{\rho_\mathbf{r}}$ associated to an areal adelic measure $\rho_\mathbf{r}$. Our first theorem gives an explicit expression for $h_{\rho_\mathbf{r}} (\alpha)$ for $\alpha \in \P^1 (\overline{K})$. To state our results, we first introduce some helpful notation. For any $r > 0$ we define the function $f_r : [0,\infty) \to \R$ by 
\begin{gather*}
    f_r (x) = \begin{cases}
        \displaystyle \log r - \frac{1}{2} + \frac{x^2}{2r^2} & \text{if } x \leq r, \\
        \log x & \text{if } x \geq r.
    \end{cases}
\end{gather*}
At $v \in S$, the function $f_{r_v} (x)$ will take the role of $\log^+ x$ that appears in (\ref{height and mahler measure relationship}). 

Suppose that $L/K$ is a finite extension. For each $w \in M_L$, we let $$n_w = \frac{[L_w : \Q_w]}{[L:\Q]}$$ be the local-to-global degree of $L/\Q$ at $w$. Let $M_L^S$ denote the set of places $w \in M_L$ such that $w \mid v$ for a place $v \in S$. At each of the places $w \in M_L^S$, let $r_w = r_v$, where $w \mid v$ and $v \in S$. With this notation, we now state Theorem \ref{areal height formula}.

\begin{thm} \label{areal height formula} 
    Let $K$ be a number field, let $S \subset M_K$ be a nonempty finite set of places of $K$, and let $\br  = (r_v)_{v \in S} \in (0,\infty)^S$. Then $\rho_\br$ is an adelic measure in the sense of Favre and Rivera-Letelier \cite{frl.2}. Furthermore, if $\alpha \in L$, where $L/K$ is a finite extension, then 
    \begin{gather*} 
        h_{\rho_\mathbf{r}}(\alpha) = h_{\rho_\mathbf{r}}(\infty) + \sum_{w \in M_L \setminus M_L^S} n_w \log^+ \abs{\alpha}_w + \sum_{w \in M_L^S} n_w f_{r_w} (\abs{\alpha}_w),
    \end{gather*}
    where 
    \begin{gather*}
        h_{\rho_\mathbf{r}}(\infty) = \sum_{v \in S} d_v \left ( \frac{1}{8} - \frac{1}{2} \log r_v \right ).
    \end{gather*}
\end{thm}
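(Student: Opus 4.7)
The plan is to dispatch the two claims separately: adelicity of $\rho_\mathbf{r}$ and the explicit evaluation of $h_{\rho_\mathbf{r}}$.

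For the adelicity, I would invoke Definition \ref{def of areal measure at v} directly. At each $v \in S$ the local measure $\rho_{\mathbf{r},v}$ is a probability measure on $\P^1_v$ (the Berkovich analytification when $v$ is non-archimedean) with a bounded continuous potential: the normalized area measure on $r_v \D$ at archimedean $v$, and the $p$-adic analog constructed in Section \ref{definition of areal weil heights} at non-archimedean $v$. At $v \notin S$, $\rho_{\mathbf{r},v}$ is the standard measure that recovers the Weil height. The FRL finiteness axiom is automatic since $|S| < \infty$.

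For the formula, the strategy is to apply the FRL identity
$$h_{\rho_\mathbf{r}}(\alpha) = h_{\rho_\mathbf{r}}(\infty) + \sum_{w \in M_L} n_w \, p_{\rho_{\mathbf{r},w}}(\alpha),$$
where $p_{\rho_{\mathbf{r},w}}(\alpha) = \int \log|\alpha - t|_w\, d\rho_{\mathbf{r},w}(t)$ is the local potential, and then compute this potential at each type of place. At $w \notin M_L^S$ the standard measure gives $p_{\rho_{\mathbf{r},w}}(\alpha) = \log^+ |\alpha|_w$, producing the first sum. At an archimedean $w \in M_L^S$, the classical identity $\int_0^{2\pi} \log|z - s e^{i\theta}|\tfrac{d\theta}{2\pi} = \log\max(|z|,s)$ combined with radial integration against $\tfrac{2s\, ds}{r_w^2}$, together with a case split on whether $|\alpha|_w \leq r_w$ or $|\alpha|_w \geq r_w$, yields $p_{\rho_{\mathbf{r},w}}(\alpha) = f_{r_w}(|\alpha|_w)$ exactly. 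At a non-archimedean $w \in M_L^S$ the same formula is the defining property of $\rho_{\mathbf{r},v}$ from Section \ref{definition of areal weil heights}, producing the second sum.

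The constant $h_{\rho_\mathbf{r}}(\infty)$ is then identified by the FRL self-energy normalization $h_\mu(\infty) = -\tfrac{1}{2}\sum_v d_v \int p_{\mu_v}\, d\mu_v$. The integral vanishes for $v \notin S$, while for $v \in S$ a direct radial computation gives
$$\int_0^{r_v} \!\!\left(\log r_v - \tfrac{1}{2} + \tfrac{t^2}{2 r_v^2}\right)\tfrac{2t}{r_v^2}\, dt = \log r_v - \tfrac{1}{4},$$
so each such place contributes $d_v(\tfrac{1}{8} - \tfrac{1}{2}\log r_v)$ as stated. The main obstacle is the non-archimedean side: verifying that the Berkovich measure $\rho_{\mathbf{r},v}$ constructed in Section \ref{definition of areal weil heights} really has potential $f_{r_v}(|\cdot|_v)$ and self-energy $\log r_v - \tfrac{1}{4}$. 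The archimedean calculations are classical, and given the non-archimedean potential and self-energy, the remainder is a bookkeeping exercise using $r_w = r_v$ and $\sum_{w \mid v} n_w = d_v$.
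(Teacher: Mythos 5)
Your proposal is correct and follows essentially the same route as the paper: both use the Favre--Rivera-Letelier decomposition of the height into a self-energy constant plus a sum of local potentials of $\rho_{\mathbf{r}}$ evaluated on the Galois orbit, and both plug in the explicit potential $f_{r_v}(|\cdot|_v)$ and self-energy $\log r_v - \tfrac{1}{4}$ from Proposition~\ref{potential function}. The one small slip is describing the non-archimedean potential as ``the defining property of $\rho_{\mathbf r,v}$''; in fact $\rho_{r_v,v}$ is defined by the radial averaging construction of Definition~\ref{def of areal measure at v}, and its potential is a computed consequence (Proposition~\ref{potential function}) rather than a definition --- but you correctly flag this as the point that has to be verified, so the logic is sound.
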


In Section \ref{inequalities section} we will prove various inequalities for areal Weil heights, the first of which gives a comparison between areal Weil heights and the classical Weil height.

\begin{prop} \label{areal heights vs weil height}
    Let $\rho_\mathbf{r}$ be an areal adelic measure. Then for any $\alpha \in \P^1 (\overline{K})$, 
    \begin{gather} \label{areal heights vs weil height inequality}
        h(\alpha) + h_{\rho_\mathbf{r}}(\infty) + \sum_{v \in S} d_v \min \{ 0, f_{r_v} (0) \} \leq h_{\rho_\mathbf{r}}(\alpha) \leq h(\alpha) + h_{\rho_\mathbf{r}}(1).
    \end{gather}
    In the upper bound, equality holds when $\alpha$ is a root of unity. In the lower bound, the constant $h_{\rho_\mathbf{r}}(\infty) + \sum_{v \in S} d_v \min \{ 0, f_{r_v} (0) \}$ cannot be improved.
\end{prop}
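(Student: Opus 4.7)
My plan is to apply Theorem \ref{areal height formula} to rewrite $h_{\rho_\mathbf{r}}(\alpha) - h(\alpha)$ in terms of the absolute values of $\alpha$, and then reduce the problem to a single-variable pointwise estimate on $[0, \infty)$. Writing $h(\alpha) = \sum_{w \in M_L} n_w \log^+|\alpha|_w$ and subtracting from the formula of Theorem \ref{areal height formula}, the contributions from $w \in M_L \setminus M_L^S$ cancel, leaving
$$h_{\rho_\mathbf{r}}(\alpha) - h(\alpha) - h_{\rho_\mathbf{r}}(\infty) = \sum_{w \in M_L^S} n_w \bigl(f_{r_w}(|\alpha|_w) - \log^+|\alpha|_w\bigr).$$
Setting $g_r(x) := f_r(x) - \log^+ x$, both inequalities in (\ref{areal heights vs weil height inequality}) will follow from the pointwise bound
$$\min\{0, f_r(0)\} \leq g_r(x) \leq f_r(1) \qquad \text{for all } x \geq 0,\; r > 0,$$
after summing over $w \in M_L^S$ and using $\sum_{w \mid v} n_w = d_v$.

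Next I would establish the pointwise bounds by a direct case analysis. Splitting $[0,\infty)$ at $\min(1,r)$ and $\max(1,r)$ and separating the subcases $r \geq 1$ and $r < 1$, on each piece $g_r$ is elementary: an increasing quadratic $\tfrac{x^2}{2r^2} + \log r - \tfrac12$ on $[0, \min(1,r)]$; identically $0$ on $[\max(1,r), \infty)$; the convex function $\tfrac{x^2}{2r^2} + \log r - \tfrac12 - \log x$ on $[1, r]$ when $r \geq 1$; and simply $\log x$ on $[r, 1]$ when $r < 1$. Endpoint and monotonicity computations then show the maximum of $g_r$ is $f_r(1)$, attained at $x = 1$, while the minimum is $\min\{0, f_r(0)\}$, attained at $x = 0$ when $r \leq \sqrt e$ (so that $f_r(0) \leq 0$) and on $[\max(1,r), \infty)$ (where $g_r \equiv 0$) when $r > \sqrt e$.

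For the equality claims: taking $\alpha$ to be a root of unity gives $|\alpha|_w = 1$ at every place, hence $\log^+|\alpha|_w = 0$ and $g_{r_w}(|\alpha|_w) = f_{r_w}(1)$ at each $w \in M_L^S$, realizing equality in the upper bound. To see the lower-bound constant is sharp, the case when every $r_v \leq \sqrt e$ is immediate by taking $\alpha = 0$, since $g_{r_w}(0) = f_{r_w}(0) = \min\{0, f_{r_w}(0)\}$ at each $w \in M_L^S$. In the mixed or opposite case, I would exhibit a sequence of algebraic numbers $\alpha_n$, obtained by strong approximation in $L^\times$, whose local absolute values at each $w \in M_L^S$ converge to a global minimizer of $g_{r_w}$ — namely $0$ where $r_w \leq \sqrt e$ and any point of $[r_w,\infty)$ where $r_w > \sqrt e$.

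The main obstacle is this last point: when the family $(r_v)_{v \in S}$ straddles the threshold $\sqrt e$, no single algebraic number attains equality, and the sharpness assertion must be phrased as an infimum approached by a suitable sequence. The case analysis for $g_r$ itself is mechanical, just requiring careful bookkeeping of which subinterval contains each of $x$, $r$, and $1$.
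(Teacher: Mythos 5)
Your proposal is correct and follows essentially the same route as the paper: the paper isolates your function $g_r(x) = f_r(x) - \log^+ x$ into the two inequalities of its Lemma~\ref{f_r properties}~(\ref{f_r properties b}) and (\ref{f_r properties c}), applies them termwise to the formula of Theorem~\ref{areal height formula}, observes the root-of-unity case for the upper bound, and establishes sharpness of the lower constant by a simultaneous/strong approximation argument producing $\alpha \in K$ with $|\alpha|_v$ small at places where $r_v < e^{1/2}$ and $|\alpha|_v \ge r_v$ at places where $r_v \ge e^{1/2}$, then letting $\varepsilon \to 0$. Your observation that $\alpha = 0$ attains the lower bound outright when every $r_v \le \sqrt e$ is a pleasant shortcut in that case, but is not needed; the paper covers all cases with a single approximation argument, and also handles $\alpha = \infty$ by a separate one-line check rather than via the formula of Theorem~\ref{areal height formula}.
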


We also give an analog of Kronecker's theorem for areal Weil heights in Section \ref{inequalities section}. If $\mu$ is an adelic measure, we define the \textit{essential minimum} of $h_\mu$, $$\mathcal{L} (\mu) = \liminf_{\alpha \in \P^1 (\overline{K})} h_\mu (\alpha).$$ In our analog of Kronecker's thereorem for an areal adelic measure $\rho_\mathbf{r}$, we find $\mathcal{L} ({\rho_\mathbf{r}})$ and determine precisely when $h_{\rho_\mathbf{r}}(\alpha) = \mathcal{L} (\rho_\mathbf{r})$. There are two cases, depending on the choice of $\mathbf{r}$. Define 
\begin{gather*}
    \gamma (\mathbf{r}) = \prod_{v \in S} r_v^{[K_v : \Q_v]}. 
\end{gather*}
In (\ref{capacity of E}), we will interpret $\gamma (\mathbf{r})$ as the capacity of a certain adelic set. The analog of Kronecker's theorem in the case where $\gamma (\mathbf{r}) \leq 1$ is given by the following.

\begin{thm} \label{kronecker when gamma (r) < 1}
    Let $\rho_\mathbf{r}$ be an areal adelic measure. For any $\alpha \in \overline{K}^\times$, $h_{\rho_\mathbf{r}}(\alpha) \geq h_{\rho_\mathbf{r}}(\infty)$. Furthermore, suppose $\alpha \in L^\times$, where $L/K$ is a finite extension. Then $h_{\rho_\mathbf{r}}(\alpha) = h_{\rho_\mathbf{r}}(\infty)$ if and only if the following two conditions for $\alpha$ are met:
    \begin{enumerate}[(i)]
        \item If $w \in M_L^S$ then $\abs{\alpha}_w \geq r_w$. \label{lower bound initial case i}
        \item If $w \in M_L \setminus M_L^S$ then $\abs{\alpha}_w \geq 1$. \label{lower bound initial case ii}
    \end{enumerate}
    Finally, if $\gamma (\mathbf{r}) \leq 1$, then $\mathcal{L} (\rho_\mathbf{r}) = h_{\rho_\mathbf{r}}(\infty)$. 
\end{thm}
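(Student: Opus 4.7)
The plan is to combine the explicit formula of Theorem~\ref{areal height formula} with a pointwise inequality and the product formula. Writing
\begin{gather*}
h_{\rho_\mathbf{r}}(\alpha) - h_{\rho_\mathbf{r}}(\infty) = \sum_{w \in M_L \setminus M_L^S} n_w \log^+|\alpha|_w + \sum_{w \in M_L^S} n_w f_{r_w}(|\alpha|_w),
\end{gather*}
the idea is to bound each summand below by $n_w \log|\alpha|_w$, so that the product formula immediately gives nonnegativity of the whole sum.

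The key local inequality is that $f_r(x) \geq \log x$ for all $r, x > 0$, with equality if and only if $x \geq r$. For $x \geq r$ this is immediate from the definition of $f_r$. For $0 < x \leq r$, setting $t = x/r \in (0,1]$ reduces the claim to $\log t \leq (t^2-1)/2$, which follows because $\phi(t) = (t^2-1)/2 - \log t$ satisfies $\phi(1) = 0$ and $\phi'(t) = t - 1/t \leq 0$ on $(0,1]$, strictly for $t < 1$. Combining this with the trivial $\log^+ x \geq \log x$ (equality iff $x \geq 1$), for $\alpha \in L^\times$ Theorem~\ref{areal height formula} and the product formula $\sum_w n_w \log|\alpha|_w = 0$ yield
\begin{gather*}
h_{\rho_\mathbf{r}}(\alpha) - h_{\rho_\mathbf{r}}(\infty) \geq \sum_{w \in M_L} n_w \log|\alpha|_w = 0,
\end{gather*}
with equality exactly when every pointwise bound is sharp, which translates precisely to conditions (i) and (ii).

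For the essential-minimum claim under $\gamma(\mathbf{r}) \leq 1$, the inequality $\mathcal{L}(\rho_\mathbf{r}) \geq h_{\rho_\mathbf{r}}(\infty)$ is immediate. For the reverse, I must exhibit infinitely many $\alpha \in \overline{K}^\times$ satisfying (i) and (ii), since any such $\alpha$ attains $h_{\rho_\mathbf{r}}(\alpha) = h_{\rho_\mathbf{r}}(\infty)$ exactly. Combining (i) and (ii) with the product formula forces
\begin{gather*}
1 = \prod_w |\alpha|_w^{[L_w:\Q_w]} \geq \gamma(\mathbf{r})^{[L:K]},
\end{gather*}
so $\gamma(\mathbf{r}) \leq 1$ is precisely the necessary condition for compatibility; the main obstacle is establishing its sufficiency. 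When $\gamma(\mathbf{r}) < 1$ there is strict slack, and I expect to produce a single $\alpha_0 \in K^\times$ satisfying (i) and (ii) by an explicit $S$-integer construction with suitably large archimedean absolute values, and then obtain infinitely many $\alpha$ by multiplying $\alpha_0$ by roots of unity in a tower of cyclotomic extensions $L/K$ (note that $|\alpha_0 \zeta|_w = |\alpha_0|_w$ for any such $\zeta$, so (i) and (ii) persist). The boundary case $\gamma(\mathbf{r}) = 1$ is tighter, forcing $|\alpha|_w = r_w$ at $w \in M_L^S$ and $|\alpha|_w = 1$ elsewhere; here I will invoke an adelic Fekete-Szegő type existence result, working in extensions $L/K$ in which the places of $S$ split with the prescribed local behavior, and again multiply by roots of unity to generate the infinite family.
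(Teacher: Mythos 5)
Your treatment of the first two assertions (the lower bound $h_{\rho_\mathbf{r}}(\alpha) \geq h_{\rho_\mathbf{r}}(\infty)$ and the equality characterization via conditions (i) and (ii)) is correct and is the same argument as the paper's: apply the formula of Theorem~\ref{areal height formula}, bound each local term below by $n_w \log|\alpha|_w$ via $f_r(x) \geq \log x$ and $\log^+ x \geq \log x$, and invoke the product formula; the equality conditions fall out term by term.

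However, your plan for proving $\mathcal{L}(\rho_\mathbf{r}) = h_{\rho_\mathbf{r}}(\infty)$ when $\gamma(\mathbf{r}) \leq 1$ has a genuine gap. You state that you ``must exhibit infinitely many $\alpha \in \overline{K}^\times$ satisfying (i) and (ii),'' i.e., that attain $h_{\rho_\mathbf{r}}(\alpha) = h_{\rho_\mathbf{r}}(\infty)$ exactly. This is both unnecessary and, in general, impossible. The quantity $\mathcal{L}(\rho_\mathbf{r})$ is a $\liminf$, so it suffices to find a sequence of distinct points with $h_{\rho_\mathbf{r}}(\alpha_n) \to h_{\rho_\mathbf{r}}(\infty)$; exact attainment is not required. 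More seriously, exact attainers need not exist: in the boundary case $\gamma(\mathbf{r}) = 1$, the product formula forces equality $|\alpha|_w = r_w$ at each $w \in M_L^S$, and if some $r_v$ is transcendental this is impossible for any algebraic $\alpha$ (the paper records exactly this obstruction in the example immediately following the theorem). Your proposed Fekete--Szeg\H{o} invocation cannot rescue this, because that theorem only controls conjugates to lie in open neighborhoods of a prescribed adelic set, not on its topological boundary. The paper's actual argument sidesteps exact attainment entirely: choose $\mathbf{t}$ with $\gamma(\mathbf{t}) = 1$ and $t_v \geq r_v$ for $v \in S$, $t_v \geq 1$ off $S$; use the adelic Fekete--Szeg\H{o} theorem (via Proposition~\ref{kronecker for lambda heights}) to produce a sequence with $h_{\lambda_\mathbf{t}}(\alpha_n) \to 0$; then apply Theorem~\ref{pst theorem} to conclude $h_{\rho_\mathbf{r}}(\alpha_n) \to \AZ{\rho_\mathbf{r}}{\lambda_\mathbf{t}}$, and finally compute from Proposition~\ref{pairing of lambda and rho heights} that $\AZ{\rho_\mathbf{r}}{\lambda_\mathbf{t}} = h_{\rho_\mathbf{r}}(\infty)$ under these hypotheses. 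If you want to avoid the Arakelov--Zhang machinery, you would still need to replace your ``exact attainer'' step with a construction of a sequence whose heights only converge to the bound, which is what Fekete--Szeg\H{o} naturally delivers (one shrinks the approximating neighborhoods along the sequence).
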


In the special case where $\gamma (\mathbf{r}) = 1$, we must in fact have equality in (\ref{lower bound initial case i}) and (\ref{lower bound initial case ii}) by the product formula (\ref{product formula}). To give the analog of Kronecker's theorem in the case where $\gamma (\mathbf{r}) > 1$, we utilize another class class of adelic heights, which are associated to adelic measures $\lambda_\mathbf{r} = (\lambda_{\mathbf{r}, v})_{v \in M_K}$, whose definition is given in Definition \ref{lambda heights def}. The utility of these heights in our setting is being able to compare them with areal Weil heights by using the \textit{Arakelov-Zhang pairing} $\AZ{\cdot}{\cdot}$, defined in (\ref{arakelov-zhang pairing def}). Roughly, given two adelic measures $\mu$ and $\nu$ defined over a number field $K$, the Arakelov-Zhang pairing $\AZ{\mu}{\nu}$ gives the distance between $\mu$ and $\nu$. To be more precise, the Arakelov-Zhang pairing is the square of a metric on the space of adelic measures, see \cite[Theorem 1]{fili_ui}. 

\begin{thm} \label{kronecker when gamma (r) > 1}
    Let $\rho_\mathbf{r}$ be an areal adelic measure such that $\gamma (\mathbf{r}) > 1$. Let $\mathbf{t} = c\mathbf{r} = (cr_v)_{v \in S}$, where $c \in (0,1)$ is chosen such that $\gamma (\mathbf{t}) = 1$. Then for any $\alpha \in \overline{K}^\times$, we have
    \begin{gather*}
        h_{\rho_\mathbf{r}} (\alpha) \geq \AZ{\rho_\mathbf{r}}{\lambda_\mathbf{t}} + c^2 (h_{\rho_\mathbf{t}} (\alpha) - h_{\rho_\mathbf{t}} (\infty)),
    \end{gather*}
    with equality if and only if $h_{\rho_\mathbf{t}} (\alpha) = h_{\rho_\mathbf{t}} (\infty)$. Furthermore, $\mathcal{L} (\rho_\mathbf{r}) = \AZ{\rho_\mathbf{r}}{\lambda_\mathbf{t}}$.
\end{thm}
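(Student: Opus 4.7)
The plan is to apply Theorem \ref{areal height formula} to both $\mathbf{r}$ and $\mathbf{t} = c\mathbf{r}$ and bound the difference $h_{\rho_\mathbf{r}}(\alpha) - c^2 h_{\rho_\mathbf{t}}(\alpha)$ place-by-place. The key analytic step is a one-variable lemma: for $c \in (0,1)$ and $r > 0$, the function $g_r(x) := f_r(x) - c^2 f_{cr}(x)$ is piecewise $C^1$ on $[0,\infty)$, constant on $[0,cr]$, increasing on $[cr,r]$ (because its derivative $x/r^2 - c^2/x$ vanishes only at $x=cr$), and equal to $(1-c^2)\log x$ on $[r,\infty)$. Hence
\begin{equation*}
    \min_{x \geq 0} g_r(x) \;=\; g_r(cr) \;=\; (1-c^2)\left(\log r - \tfrac{1}{2}\right) - c^2 \log c,
\end{equation*}
attained exactly on $[0,cr]$. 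Combined with the trivial bound $(1-c^2)\log^+|\alpha|_w \geq 0$ at places $w \notin M_L^S$, summing over $w$ produces an inequality of the form
\begin{equation*}
    h_{\rho_\mathbf{r}}(\alpha) - c^2 h_{\rho_\mathbf{t}}(\alpha) \;\geq\; C
\end{equation*}
for an explicit constant $C$ depending only on $\mathbf{r}$ and $c$.

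To identify $C$ with $\AZ{\rho_\mathbf{r}}{\lambda_\mathbf{t}} - c^2 h_{\rho_\mathbf{t}}(\infty)$, I would unwind the Arakelov--Zhang pairing from Section \ref{preliminaries section} using the potential functions of $\rho_\mathbf{r}$ and $\lambda_\mathbf{t}$; the pairing reduces to a routine place-wise integral. For the equality clause, if equality holds in the derived inequality then $|\alpha|_w \leq t_w = cr_w$ at every $w \in M_L^S$ and $|\alpha|_w \leq 1$ at every $w \notin M_L^S$. Since $\gamma(\mathbf{t}) = 1$ translates to $\sum_{v \in S} d_v \log t_v = 0$, the weak bound $\sum_w n_w \log|\alpha|_w \leq 0$ coincides with the product formula, forcing every place-wise bound to be tight, i.e.\ $|\alpha|_w = t_w$ and $|\alpha|_w = 1$ respectively. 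By Theorem \ref{kronecker when gamma (r) < 1} applied to $\mathbf{t}$ (in which, as the excerpt notes, (i) and (ii) become equalities by the product formula), this is equivalent to $h_{\rho_\mathbf{t}}(\alpha) = h_{\rho_\mathbf{t}}(\infty)$. The converse direction falls out of the same characterization: whenever $h_{\rho_\mathbf{t}}(\alpha) = h_{\rho_\mathbf{t}}(\infty)$, the absolute values $|\alpha|_w$ land in the constant regime of each $g_{r_w}$ and on $\{|\alpha|_w \leq 1\}$ at the remaining places.

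Finally, for the essential minimum, the inequality combined with $h_{\rho_\mathbf{t}}(\alpha) \geq h_{\rho_\mathbf{t}}(\infty)$ (Theorem \ref{kronecker when gamma (r) < 1} applied to $\mathbf{t}$) immediately yields $\mathcal{L}(\rho_\mathbf{r}) \geq \AZ{\rho_\mathbf{r}}{\lambda_\mathbf{t}}$. For the reverse direction, one needs infinitely many algebraic numbers of unbounded degree realizing equality, i.e.\ with $|\alpha|_w = t_w$ at $w \in M_L^S$ and $|\alpha|_w = 1$ elsewhere. I expect this existence to be the principal obstacle: it does not follow formally from anything stated earlier. The cleanest route is probably an explicit construction --- for instance, roots of polynomials $x^n - \beta$ for $\beta \in K^\times$ chosen with the prescribed local valuations and varying $n$ --- or an appeal, within the adelic-height framework of \cite{frl.2}, to a density statement for points saturating the essential minimum of $h_{\rho_\mathbf{t}}$, from which the equality case of the main inequality upgrades to equality for $\mathcal{L}(\rho_\mathbf{r})$.
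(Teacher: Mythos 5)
Your argument for the main inequality is, up to algebraic repackaging, the same as the paper's: the paper's pointwise lemma $c^{-2}\bigl(f_{r_w}(x) - f_{r_w}(t_w)\bigr) \geq f_{t_w}(x) - \log t_w$, with equality iff $x \le t_w$, rearranges exactly to your statement $g_{r_w}(x) \geq g_{r_w}(cr_w)$ with the minimum attained on $[0, cr_w]$, and summing against $n_w$ is a cosmetic difference from the paper's summing against $d_v/n$. Your identification of the constant $C$ with $\AZ{\rho_\mathbf{r}}{\lambda_\mathbf{t}} - c^{2} h_{\rho_\mathbf{t}}(\infty)$ works, though it's worth noting that $g_{r_v}(cr_v) - f_{r_v}(t_v) = -c^2\log t_v$ is not zero place by place --- only the $d_v$-weighted sum vanishes, using $\gamma(\mathbf{t})=1$ via Proposition~\ref{pairing of lambda and rho heights}. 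The equality characterization is also the same as the paper's.

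The genuine gap is the one you flag: the reverse inequality $\mathcal{L}(\rho_\mathbf{r}) \leq \AZ{\rho_\mathbf{r}}{\lambda_\mathbf{t}}$. But two of your proposed workarounds would not succeed as stated. Constructing $\alpha$ with $|\alpha|_w = t_w$ exactly at $w \in M_L^S$ can be impossible (for instance if some $t_v$ is transcendental, as the paper's example after Theorem~\ref{kronecker when gamma (r) < 1} notes), so you cannot always realize the equality case. Appealing to ``a density statement for points saturating the essential minimum of $h_{\rho_\mathbf{t}}$'' also stalls: $\mathcal{L}(\rho_\mathbf{t}) = h_{\rho_\mathbf{t}}(\infty) = \sum_{v\in S} d_v/8 > 0$, so you cannot feed $\rho_\mathbf{t}$ into Theorem~\ref{pst theorem}, and the one-sided inequality $h_{\rho_\mathbf{r}}(\alpha_n) \ge \AZ{\rho_\mathbf{r}}{\lambda_\mathbf{t}} + c^2(h_{\rho_\mathbf{t}}(\alpha_n) - h_{\rho_\mathbf{t}}(\infty))$ alone does not force $h_{\rho_\mathbf{r}}(\alpha_n)$ to converge to $\AZ{\rho_\mathbf{r}}{\lambda_\mathbf{t}}$. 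The missing ingredient, already in the paper's preliminaries, is to minimize the \emph{lambda} height instead: Proposition~\ref{kronecker for lambda heights} (which rests on the adelic Fekete--Szeg\H{o} theorem) gives $\mathcal{L}(\lambda_\mathbf{t}) = 0$, so one can take a sequence of distinct $\alpha_n$ with $h_{\lambda_\mathbf{t}}(\alpha_n) \to 0$, and then Theorem~\ref{pst theorem} yields $h_{\rho_\mathbf{r}}(\alpha_n) \to \AZ{\rho_\mathbf{r}}{\lambda_\mathbf{t}}$ directly. That is the step that upgrades your lower bound to an equality for the essential minimum.
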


For Theorem \ref{kronecker when gamma (r) > 1}, note that the conditions for $h_{\rho_\mathbf{t}} (\alpha) = h_{\rho_\mathbf{t}} (\infty)$ to hold are covered in Theorem \ref{kronecker when gamma (r) < 1}. We will show in Corollary \ref{spectrum of areal height corollary} that $h_{\rho_\mathbf{r}} (0) < \AZ{\rho_\mathbf{r}}{\lambda_\mathbf{t}}$ and $h_{\rho_\mathbf{r}} (\infty) < \AZ{\rho_\mathbf{r}}{\lambda_\mathbf{t}}$.

In Section \ref{equidistribution section}, we examine equidistribution for sequences $(\alpha_n)_{n=1}^\infty$ of distinct points in $\P^1 (\overline{K})$ such that $h_{\rho_\mathbf{r}}(\alpha_n) \to \mathcal{L}(\rho_\mathbf{r})$ as $n \to \infty$. Our main tool is Favre and Rivera-Letelier's equidistibution theorem, which states that if $h_\mu$ is an adelic measure such that $h_\mu (\alpha_n) \to 0$ as $n \to \infty$, then $[\alpha_n]_v \overset{*}{\to} \mu_v$ at every $v \in M_K$, where convergence is in the weak-* topology (see (\ref{point-mass measure}) for the definition of $[\alpha_n]_v$ and see Theorem \ref{frl thm 2} for the formal statement of the theorem). Since $\mathcal{L} (\rho_\mathbf{r}) > 0$, we can't apply this theorem directly. Insteady, we use the adelic measures $\lambda_\mathbf{t}$, along with the Arakelov-Zhang pairing. In the case when $\gamma (\mathbf{r}) < 1$, we don't have a single limiting distribution, as the following result shows.

\begin{prop} \label{equidistribution for gamma (r) < 1}
    Let $\rho_{\mathbf{r}}$ be an areal adelic measure such that $\gamma (\mathbf{r}) < 1$ and let $S' \subset M_K$ be a finite set of places such that $S \subset S'$. Suppose that $\mathbf{t} = (t_v)_{v \in S} \in (0,\infty)^{S'}$ such that $t_v \geq r_v$ for every $v \in S$, $t_v \geq 1$ for every $v \in S' \setminus S$, and $\gamma (\mathbf{t}) = 1$. Then for any sequence of distinct points $(\alpha_n)_{n=1}^\infty$ in $\P^1 (\overline{K})$ such that $h_{\lambda_\mathbf{t}} (\alpha_n) \to 0$, we have $h_{\rho_\mathbf{r}} (\alpha_n) \to h_{\rho_\mathbf{r}}(\infty)$. In this case, $[\alpha_n]_v \overset{*}{\to} \lambda_{\mathbf{t},v}$ at every $v \in M_K$. 
\end{prop}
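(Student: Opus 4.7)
The argument proceeds in two stages: first deduce the equidistribution $[\alpha_n]_v \overset{*}{\to} \lambda_{\mathbf{t},v}$ from the hypothesis $h_{\lambda_\mathbf{t}}(\alpha_n) \to 0$, and then compute $\lim_n h_{\rho_\mathbf{r}}(\alpha_n)$ using this equidistribution. For the first stage I would appeal directly to the Favre--Rivera-Letelier equidistribution theorem applied to the adelic measure $\lambda_\mathbf{t}$: the normalization $\gamma(\mathbf{t}) = 1$ ensures that $\lambda_\mathbf{t}$ is a genuine adelic measure with $\mathcal{L}(\lambda_\mathbf{t}) = 0$ (the essential-minimum calculation should parallel the $\gamma(\mathbf{r}) = 1$ case of Theorem \ref{kronecker when gamma (r) < 1}), and the hypotheses $h_{\lambda_\mathbf{t}}(\alpha_n) \to 0$ together with distinctness of the $\alpha_n$ then yield $[\alpha_n]_v \overset{*}{\to} \lambda_{\mathbf{t},v}$ at every $v \in M_K$.

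For the second stage I would use Theorem \ref{areal height formula} to regroup the sum over places of $L$ as a sum over places of $K$:
\begin{equation*}
    h_{\rho_\mathbf{r}}(\alpha_n) - h_{\rho_\mathbf{r}}(\infty) \;=\; \sum_{v \in M_K} d_v \int F_v(\abs{z}_v) \, d[\alpha_n]_v(z),
\end{equation*}
where $F_v = f_{r_v}$ for $v \in S$ and $F_v = \log^+$ otherwise. Each $\lambda_{\mathbf{t},v}$ is supported on the locus $\abs{z}_v = t_v$ (with the convention $t_v = 1$ for $v \notin S'$, corresponding to the Dirac mass at the Gauss point). Under the hypotheses $t_v \geq r_v$ for $v \in S$ and $t_v \geq 1$ for $v \in S' \setminus S$, one verifies $F_v(t_v) = \log t_v$ at every place, so
\begin{equation*}
    \sum_{v \in M_K} d_v \int F_v(\abs{z}_v) \, d\lambda_{\mathbf{t},v} \;=\; \sum_{v \in S'} d_v \log t_v \;=\; \tfrac{1}{[K:\Q]} \log \gamma(\mathbf{t}) \;=\; 0.
\end{equation*}
Passing to the limit in the displayed identity then gives $h_{\rho_\mathbf{r}}(\alpha_n) \to h_{\rho_\mathbf{r}}(\infty)$.

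The main obstacle is justifying the passage to the limit in each local integral, because the function $z \mapsto F_v(\abs{z}_v)$ is continuous on the Berkovich line but unbounded near the point at infinity, so weak-* convergence of probability measures does not apply off the shelf. I would handle this by truncating $F_v$ at a large threshold $T$, applying the weak-* convergence to the bounded continuous truncation, and then bounding the tail error uniformly in $n$ using the estimate $h_{\lambda_\mathbf{t}}(\alpha_n) = O(1)$, which limits the $[\alpha_n]_v$-mass on the region $\abs{z}_v \gg t_v$. A cleaner variant that fits the framework of the paper is to identify $\lim_n h_{\rho_\mathbf{r}}(\alpha_n)$ with the Arakelov--Zhang pairing $\AZ{\rho_\mathbf{r}}{\lambda_\mathbf{t}}$ via the standard limit formula for the pairing along a sequence for which $h_{\lambda_\mathbf{t}}(\alpha_n) \to \mathcal{L}(\lambda_\mathbf{t}) = 0$, and then evaluate $\AZ{\rho_\mathbf{r}}{\lambda_\mathbf{t}} = h_{\rho_\mathbf{r}}(\infty)$ by the same local computation just described.
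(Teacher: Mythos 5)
Your "cleaner variant" is precisely the paper's proof: it invokes Theorem \ref{pst theorem} (Fili's extension of the PST theorem) directly, producing $h_{\rho_\mathbf{r}}(\alpha_n) \to \AZ{\rho_\mathbf{r}}{\lambda_\mathbf{t}}$, reads off $\AZ{\rho_\mathbf{r}}{\lambda_\mathbf{t}} = h_{\rho_\mathbf{r}}(\infty)$ from the computation already carried out in the proof of Theorem \ref{kronecker when gamma (r) < 1}, and then applies Theorem \ref{frl thm 2} for the weak-$*$ statement. So the route you flag as "cleaner" is not merely cleaner; it is the one that actually closes the argument.

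Your primary route has a real gap in the tail estimate. After truncating $F_v$ at height $T$, the error term at the place $v$ is bounded by $\int_{|z|_v > T} \log|z|_v \, d[\alpha_n]_v$, and the only control the bound $h_{\lambda_\mathbf{t}}(\alpha_n) = O(1)$ (equivalently $h(\alpha_n) = O(1)$) supplies is $\int_{|z|_v > T} \log|z|_v \, d[\alpha_n]_v \le C/d_v$, a constant independent of $T$. This does not tend to zero as $T \to \infty$, so the truncation argument as sketched does not go through without an additional uniform integrability input. The clean way to repair the direct calculation is to avoid expanding $h_{\rho_\mathbf{r}}(\alpha_n) - h_{\rho_\mathbf{r}}(\infty)$ alone: instead write
\begin{gather*}
h_{\rho_\mathbf{r}}(\alpha_n) - h_{\rho_\mathbf{r}}(\infty) = \bigl(h_{\rho_\mathbf{r}}(\alpha_n) - h_{\lambda_\mathbf{t}}(\alpha_n)\bigr) + h_{\lambda_\mathbf{t}}(\alpha_n) - h_{\rho_\mathbf{r}}(\infty),
\end{gather*}
and note that the first bracket is a finite sum over $v \in S'$ whose cross terms involve the integral of $p_{\rho_{\mathbf{r},v}} - p_{\lambda_{\mathbf{t},v}}$ against $[\alpha_n]_v$. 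Since $t_v \ge \max\{r_v, 1\}$ forces $p_{\rho_{\mathbf{r},v}}(z) = p_{\lambda_{\mathbf{t},v}}(z) = \log|z|_v$ whenever $|z|_v \ge t_v$, the difference of potentials is a \emph{bounded} continuous function on $\sP_v^1$, so weak-$*$ convergence applies directly with no tail issue. This is exactly the mechanism packaged inside the mutual-energy/pairing formalism, and hence inside \cite[Theorem 9]{fili_ui}, which is why invoking that theorem is the efficient route.
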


This result is nontrivial, since we will show in Proposition \ref{kronecker for lambda heights} that $\mathcal{L} (\lambda_{\mathbf{t}}) = 0$ when $\gamma (\mathbf{t}) = 1$. Alse, as the following example shows, there are infinitely many choices of $\mathbf{t}$ satisfying the conditions in Proposition \ref{equidistribution for gamma (r) < 1}.

\begin{example}
    Let $v_0 \in M_K \setminus S$ and let $S' = S \cup \{v_0\}$. Let $\mathbf{t} = (t_v)_{v \in S'}$ be defined by setting $t_v = r_v$ for $v \in S$ and $$t_{v_0} = \left ( \prod_{v \in S} t_v^{d_v} \right )^{-\frac{1}{d_{v_0}}}.$$ Then $\mathbf{t}$ satisfies the assumptions of Proposition \ref{equidistribution for gamma (r) < 1}. 
\end{example}

In the case where $\gamma (\mathbf{r}) \geq 1$, we have the following result, which can be see as an analog of Bilu's equidistribution theorem \cite[Theorem 1.1]{bilu}. 

\begin{thm} \label{equidistribution for gamma (r) > 1}
    Let $\rho_\mathbf{r}$ be an areal adelic measure such that $\gamma (\mathbf{r}) \geq 1$ and let $\mathbf{t} = c\mathbf{r} = (cr_v)_{v \in S}$, where $c \in (0,1)$ is chosen such that $\gamma (\mathbf{t}) = 1$. If $(\alpha_n)_{n=1}^\infty$ is a sequence of distinct points in $\P^1 (\overline{K})$, then $h_{\rho_\mathbf{r}} (\alpha_n) \to \AZ{\rho_\mathbf{r}}{\lambda_\mathbf{t}} = \mathcal{L} (\rho_\mathbf{r})$ if and only if $h_{\lambda_\mathbf{t}} (\alpha_n) \to 0$. In this case, $[\alpha_n]_v \overset{*}{\to} \lambda_{\mathbf{t},v}$ at every $v \in M_K$. 
\end{thm}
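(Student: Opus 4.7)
The strategy is to combine the sharp lower bound from Theorem \ref{kronecker when gamma (r) > 1} with Favre and Rivera-Letelier's equidistribution theorem applied to $\lambda_\mathbf{t}$. The key observation is that since $\gamma(\mathbf{t}) = 1$, Proposition \ref{kronecker for lambda heights} gives $\mathcal{L}(\lambda_\mathbf{t}) = 0$, so Theorem \ref{frl thm 2} applies directly to any sequence satisfying $h_{\lambda_\mathbf{t}}(\alpha_n) \to 0$ and produces the weak-$*$ convergence $[\alpha_n]_v \overset{*}{\to} \lambda_{\mathbf{t},v}$ at every $v \in M_K$; this immediately gives the final equidistribution claim once either implication of the ``iff'' is established.

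For the ``if'' direction, suppose $h_{\lambda_\mathbf{t}}(\alpha_n) \to 0$, so by the above $[\alpha_n]_v \overset{*}{\to} \lambda_{\mathbf{t},v}$ at each $v$. I would then write $h_{\rho_\mathbf{r}}(\alpha_n)$ via Theorem \ref{areal height formula} as a finite sum over places of integrals of the continuous functions $\log^+ |z|_v$ or $f_{r_v}(|z|_v)$ against $[\alpha_n]_v$, and pass to the weak-$*$ limit to obtain $h_{\rho_\mathbf{r}}(\alpha_n) \to \AZ{\rho_\mathbf{r}}{\lambda_\mathbf{t}}$; the identification of this limit with the Arakelov-Zhang pairing follows from the integral definition of $\AZ{\cdot}{\cdot}$ in (\ref{arakelov-zhang pairing def}). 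The final equality $\AZ{\rho_\mathbf{r}}{\lambda_\mathbf{t}} = \mathcal{L}(\rho_\mathbf{r})$ is the last assertion of Theorem \ref{kronecker when gamma (r) > 1}.

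For the ``only if'' direction, assume $h_{\rho_\mathbf{r}}(\alpha_n) \to \AZ{\rho_\mathbf{r}}{\lambda_\mathbf{t}}$. When $\gamma(\mathbf{r}) > 1$ so that $c \in (0,1)$, the inequality
\begin{gather*}
h_{\rho_\mathbf{r}}(\alpha_n) \geq \AZ{\rho_\mathbf{r}}{\lambda_\mathbf{t}} + c^2\bigl(h_{\rho_\mathbf{t}}(\alpha_n) - h_{\rho_\mathbf{t}}(\infty)\bigr)
\end{gather*}
from Theorem \ref{kronecker when gamma (r) > 1}, together with the bound $h_{\rho_\mathbf{t}}(\alpha_n) \geq h_{\rho_\mathbf{t}}(\infty)$ of Theorem \ref{kronecker when gamma (r) < 1} (which applies since $\gamma(\mathbf{t}) = 1$), squeezes $h_{\rho_\mathbf{t}}(\alpha_n) \to h_{\rho_\mathbf{t}}(\infty) = \mathcal{L}(\rho_\mathbf{t})$; the boundary case $\gamma(\mathbf{r}) = 1$ is immediate since then $\mathbf{t} = \mathbf{r}$ and $\AZ{\rho_\mathbf{r}}{\lambda_\mathbf{t}} = h_{\rho_\mathbf{r}}(\infty)$. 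To upgrade $h_{\rho_\mathbf{t}}(\alpha_n) \to \mathcal{L}(\rho_\mathbf{t})$ to $h_{\lambda_\mathbf{t}}(\alpha_n) \to 0$, I argue by contradiction: if a subsequence satisfies $h_{\lambda_\mathbf{t}}(\alpha_{n_k}) \geq \varepsilon > 0$, extract by weak-$*$ compactness a further subsequence with $[\alpha_{n_k}]_v \overset{*}{\to} \mu_v$ at every $v$, and integrate as in the ``if'' direction to obtain $\AZ{\rho_\mathbf{t}}{\mu} = h_{\rho_\mathbf{t}}(\infty)$ and $\AZ{\lambda_\mathbf{t}}{\mu} \geq \varepsilon$. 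Since the Arakelov-Zhang pairing is the square of a metric on adelic measures, the second inequality forces $\mu \neq \lambda_\mathbf{t}$, contradicting the fact that $\AZ{\rho_\mathbf{t}}{\,\cdot\,}$ is uniquely minimized at $\mu = \lambda_\mathbf{t}$ with minimum value $h_{\rho_\mathbf{t}}(\infty)$. The main obstacle is precisely this last uniqueness claim; I expect to establish it place by place from the explicit formulas for the potentials $p_{\rho_{\mathbf{t},v}}$ and $p_{\lambda_{\mathbf{t},v}}$, or, equivalently, to extract a pointwise comparison $h_{\rho_\mathbf{t}}(\alpha) - h_{\rho_\mathbf{t}}(\infty) \geq C\,h_{\lambda_\mathbf{t}}(\alpha)$ directly from Theorem \ref{areal height formula}, which would collapse the contradiction argument into a straightforward squeeze.
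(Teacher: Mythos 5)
Your reduction of the general case to $c=1$ via the inequality from Theorem \ref{kronecker when gamma (r) > 1} is correct and matches the paper, and the ``if'' direction can be handled cleanly by a direct application of Theorem \ref{pst theorem} (your plan of integrating potentials against $[\alpha_n]_v$ is more delicate than you indicate, since the potentials $\log^+\abs{\cdot}_v$ and $f_{r_v}(\abs{\cdot}_v)$ are unbounded and hence not legitimate test functions for weak-$*$ convergence on $\sP_v^1$; Theorem \ref{pst theorem} is exactly the black box that handles this). The serious gap is in the $c=1$ case, which is the entire content of the ``only if'' direction. Your compactness argument has several independent problems: the weak-$*$ limit $\mu = (\mu_v)$ need not be an adelic measure (in particular need not have continuous potentials or equal $\lambda_v$ away from a finite set); the mutual energies $(\mu,\mu)_v$ appearing in $\AZ{\cdot}{\cdot}$ are not weak-$*$ continuous, so ``integrating as in the if direction'' does not produce $\AZ{\rho_\mathbf t}{\mu}$; and the uniqueness-of-minimizer claim you want to invoke is precisely Corollary \ref{peters observation}, which the paper derives \emph{from} this theorem, so the argument would be circular.

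Your proposed fallback, a pointwise inequality $h_{\rho_\mathbf{t}}(\alpha) - h_{\rho_\mathbf{t}}(\infty) \geq C\,h_{\lambda_\mathbf{t}}(\alpha)$ with $C>0$, is provably false: since $f_{t_w}(x) \leq \log\max\{t_w,x\}$ for all $x$ (with strict inequality when $x<t_w$), one has
\begin{equation*}
h_{\rho_\mathbf{t}}(\alpha) - h_{\rho_\mathbf{t}}(\infty) = h_{\lambda_\mathbf{t}}(\alpha) + \sum_{w \in M_L^S} n_w\bigl(f_{t_w}(\abs{\alpha}_w) - \log\max\{t_w,\abs{\alpha}_w\}\bigr) \leq h_{\lambda_\mathbf{t}}(\alpha),
\end{equation*}
and by making half the conjugates at an archimedean $v\in S$ have $\abs{\alpha}_v = a$ slightly below $t_v$ and half equal to $t_v^2/a$, with the other places trivial, the ratio of the two sides tends to $0$ as $a \to t_v^-$; hence no positive uniform $C$ exists. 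The paper instead proves the $c=1$ case by a Pritsker-style argument: using the product formula it reduces to showing that the averaged correction terms $\frac{1}{\ell_n}\sum_{\abs{\alpha_{n,k}}_v < r_v}(-\frac12 + \abs{\alpha_{n,k}}_v^2/(2r_v^2))$ tend to zero at each $v\in S$, and establishes this by showing (via a contradiction keyed to the strict positivity of $f_{r_v}(x) - \log x$ for $x < r_v$) that $o(\ell_n)$ conjugates of $\alpha_n$ lie in $\mathcal{D}_v(0,a)$ for any fixed $a < r_v$. This asymptotic counting estimate is what replaces the nonexistent pointwise inequality; your proposal does not recover it.
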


As a corollary, $\lambda_{\mathbf{t}}$ is the unique closest adelic measure to $\rho_\mathbf{r}$ such that its essential minimum is zero.

\begin{cor} \label{peters observation}
    Let $\rho_\mathbf{r}$ be an areal adelic measure such that $\gamma (\mathbf{r}) \geq 1$ and let $\mathbf{t} = c\mathbf{r} = (cr_v)_{v \in S}$, where $c>0$ is chosen such that $\gamma (\mathbf{t}) = 1$. Suppose that $\mu$ is an adelic measure such that $\mathcal{L} (\mu) = 0$. Then $\AZ{\rho_\mathbf{r}}{\mu} \geq \AZ{\rho_\mathbf{r}}{\lambda_\mathbf{t}}$, with equality only when $\mu = \lambda_\mathbf{t}$. 
\end{cor}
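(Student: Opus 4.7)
The strategy is to bound $h_{\rho_\mathbf{r}}$ from below along a small-points sequence for $\mu$ using Theorem \ref{kronecker when gamma (r) > 1}, identify the limit of these heights with $\AZ{\rho_\mathbf{r}}{\mu}$, and in the equality case invoke Theorem \ref{equidistribution for gamma (r) > 1} to force $\mu = \lambda_\mathbf{t}$. Since $\mathcal{L}(\mu) = 0$, I would first pick a sequence $(\alpha_n)_{n=1}^\infty$ of distinct points in $\P^1(\overline{K})$ with $h_\mu(\alpha_n) \to 0$; Favre and Rivera-Letelier's equidistribution theorem (Theorem \ref{frl thm 2}) then yields $[\alpha_n]_v \overset{*}{\to} \mu_v$ at every $v \in M_K$.

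Theorem \ref{kronecker when gamma (r) < 1} applied to $\mathbf{t}$ (legal because $\gamma(\mathbf{t}) = 1$) gives $h_{\rho_\mathbf{t}}(\alpha) \geq h_{\rho_\mathbf{t}}(\infty)$ for every $\alpha \in \overline{K}^\times$, so the inequality in Theorem \ref{kronecker when gamma (r) > 1} collapses to $h_{\rho_\mathbf{r}}(\alpha_n) \geq \AZ{\rho_\mathbf{r}}{\lambda_\mathbf{t}}$ for all $n$ (the boundary case $\gamma(\mathbf{r}) = 1$, where $c = 1$ and $\mathbf{t} = \mathbf{r}$, is immediate from Theorem \ref{kronecker when gamma (r) < 1}). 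The integral representation of the Arakelov-Zhang pairing from (\ref{arakelov-zhang pairing def}), combined with the weak-$*$ convergence above and the regularity of the local potentials $g_{\rho_{\mathbf{r}}, v}$ constructed in Section \ref{definition of areal weil heights}, should then give $h_{\rho_\mathbf{r}}(\alpha_n) \to \AZ{\rho_\mathbf{r}}{\mu}$. Passing to the limit in the displayed inequality yields $\AZ{\rho_\mathbf{r}}{\mu} \geq \AZ{\rho_\mathbf{r}}{\lambda_\mathbf{t}}$.

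For the equality clause, suppose $\AZ{\rho_\mathbf{r}}{\mu} = \AZ{\rho_\mathbf{r}}{\lambda_\mathbf{t}}$. Then $h_{\rho_\mathbf{r}}(\alpha_n) \to \mathcal{L}(\rho_\mathbf{r})$ by Theorem \ref{kronecker when gamma (r) > 1}, so Theorem \ref{equidistribution for gamma (r) > 1} produces $[\alpha_n]_v \overset{*}{\to} \lambda_{\mathbf{t}, v}$ at every $v$. Uniqueness of weak-$*$ limits, combined with the earlier convergence to $\mu_v$, forces $\mu_v = \lambda_{\mathbf{t}, v}$ for all $v \in M_K$, so $\mu = \lambda_\mathbf{t}$. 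The single technical point requiring care is the convergence $h_{\rho_\mathbf{r}}(\alpha_n) \to \AZ{\rho_\mathbf{r}}{\mu}$, which depends on checking that $g_{\rho_\mathbf{r}, v}$ is sufficiently tame (bounded and, on non-archimedean Berkovich lines, continuous enough) that the integrals defining $h_{\rho_\mathbf{r}}(\alpha_n)$ pass through the weak-$*$ limit. This is the standard continuity ingredient underpinning the framework of \cite{frl.2}, and I expect it to be quickly verifiable from the explicit construction of $\rho_{\mathbf{r}, v}$ in Section \ref{definition of areal weil heights}.
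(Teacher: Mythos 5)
Your overall structure -- take a small-points sequence for $\mu$, identify $\lim_n h_{\rho_\mathbf{r}}(\alpha_n)$ with $\AZ{\rho_\mathbf{r}}{\mu}$, bound it below by $\mathcal{L}(\rho_\mathbf{r}) = \AZ{\rho_\mathbf{r}}{\lambda_\mathbf{t}}$, and in the equality case pit the two equidistribution conclusions against each other -- matches the paper's argument. The equality case is handled correctly.

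The genuine gap is the step you flag as ``the single technical point requiring care'': the convergence $h_{\rho_\mathbf{r}}(\alpha_n) \to \AZ{\rho_\mathbf{r}}{\mu}$. This is \emph{not} a quick consequence of weak-$*$ convergence $[\alpha_n]_v \overset{*}{\to} \mu_v$ together with continuity of the areal potentials. To see why, recall from the proof of Theorem \ref{areal height formula} (via Lemme 5.3 of \cite{frl.2}) that $h_{\rho_\mathbf{r}}(\alpha_n) = h_{\rho_\mathbf{r}}(\infty) - \sum_v d_v(\rho_\mathbf{r}, [\alpha_n])_v$, while by \cite[Proposition 7]{fili_ui} (used in Proposition \ref{pairing of lambda and rho heights}), $\AZ{\rho_\mathbf{r}}{\mu} = h_{\rho_\mathbf{r}}(\infty) + h_\mu(\infty) - \sum_v d_v(\rho_\mathbf{r}, \mu)_v$. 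Passing the weak-$*$ limit through each local pairing $(\rho_\mathbf{r}, [\alpha_n])_v$ -- even when legitimate -- would produce the quantity $h_{\rho_\mathbf{r}}(\infty) - \sum_v d_v(\rho_\mathbf{r}, \mu)_v$, which differs from $\AZ{\rho_\mathbf{r}}{\mu}$ by the constant $h_\mu(\infty)$. That extra constant does not come from weak-$*$ convergence; it comes from the quantitative hypothesis $h_\mu(\alpha_n) \to 0$, which your limit argument discards after having extracted equidistribution from it. (There is also a nontrivial uniform-in-$v$ step needed to interchange $\lim_n$ with the sum over infinitely many places.) The clean way around both issues is exactly Theorem \ref{pst theorem} (Fili's extension of the Petsche--Szpiro--Tucker theorem), which directly gives $h_\nu(\alpha_n) \to \AZ{\mu}{\nu}$ whenever $h_\mu(\alpha_n) \to 0$; the paper cites this and you should too, rather than attempting to re-derive it. With Theorem \ref{pst theorem} substituted in for your step, your proof becomes essentially the paper's. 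As a minor simplification, the lower bound $\lim_n h_{\rho_\mathbf{r}}(\alpha_n) \geq \AZ{\rho_\mathbf{r}}{\lambda_\mathbf{t}}$ is immediate from $\lim_n h_{\rho_\mathbf{r}}(\alpha_n) \geq \mathcal{L}(\rho_\mathbf{r})$ (definition of essential minimum over distinct points) together with $\mathcal{L}(\rho_\mathbf{r}) = \AZ{\rho_\mathbf{r}}{\lambda_\mathbf{t}}$ from Theorem \ref{kronecker when gamma (r) > 1}; your route through the pointwise inequality in that theorem works but is longer than necessary.
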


Section \ref{examples section} is devoted to examples and has four different subsections. In \ref{lehmers question for areal weil heights}, we will consider an analog of Lehmer's Conjecture \ref{lehmers conjecture} for areal Weil heights $h_{\rho_{\mathbf{r}}}$, where $\mathbf{r} = (1)_{v \in S}$. We show in Proposition \ref{a sequence fails lehmer} that an analog of Lehmer's Conjecture \ref{lehmers conjecture} is false for these heights. In \ref{special class of areal}, we study the following important class of areal Weil heights. Let $S = \{\infty\} \subset M_\Q$ and let $r = r_\infty > 0$. We consider areal Weil heights of the form $h_{\rho_\mathbf{r}}$, where $\mathbf{r} = (r_\infty)$. For \ref{special class of areal} and for the rest of the paper, we use the notation $\rho_r$ in place of $\rho_\mathbf{r}$, since $\mathbf{r}$ only depends on the parameter $r \in (0,\infty)$. In \ref{uniform equidistribution}, we determine for which $r$ a sequence of algebraic integers can uniformly distribute to the measure $\rho_{r,\infty}$, the normalized area measure on $r\D$. We also compute the limiting height for such a sequence. In \ref{computation of pairings}, we give explicit computations of the Arakelov-Zhang pairing of $\rho_r$ with other measures. We conclude with Proposition \ref{minimize pairing prop}, in which we find that $r=2$ minimizes the Arakelov-Zhang pairing of $\rho_r$ and $\mu$, where $\mu$ is the adelic measure associated to the Chebyshev polynomial $T(x) = x^2-2$.

\subsection*{Acknowledgments} The idea for this paper was inspired by a talk about the areal Mahler measure that Igor Pritsker gave at the number theory seminar at Oklahoma State University. I am extremely grateful to my advisor, Paul Fili, who gave me the idea for Definition \ref{def of areal measure at v}. I thank John Doyle for comments on an earlier draft of this paper. Finally, I credit Peter Oberly for the idea of Corollary \ref{peters observation}.

\section{Preliminaries} \label{preliminaries section}

\subsection{Basic Notions}

Let $K$ be a number field, let $M_K$ denote the set of places of $K$, and let $$d_v = \frac{[K_v : \Q_v]}{[K : \Q]}$$ be the local-to-global degree of $K / \Q$. Suppose that $\alpha \in K^\times$. A fundamental tool that we will use several times is the \textit{product formula}:
\begin{gather} \label{product formula}
    \sum_{v \in M_K} d_v \log \abs{\alpha}_v = 0.
\end{gather}

At each $v \in M_K$, let $\C_v$ be a completion of an algebraic closure of $K_v$. We will denote the open disk of radius $r$ centered at $a$ in $\C_v$ by $D_v (a,r)$ and the closed disk by $\overline{D_v} (a,r)$. If $v \mid \infty$, then we identify $\C_v$ with $\C$, and we may write $r\D$ or $D(0,r)$ in place of $D_v (0,r)$.

We let $\sP_v^1$ denote the projective Berkovich line over $\C_v$. If $v \mid \infty$, then we identify $\sP_v^1$ with $\P^1 (\C)$. When $v \nmid \infty$, $\sP_v^1$ is a Hausdorff, locally compact, path connected space which includes $\C_v$ as a dense subspace. For background material on Berkovich spaces we refer the reader to \cite{berkovich}, \cite{baker-rumely.book}, and \cite{benedetto}. Following \cite{baker-rumely.book}, we use the notation $\mathcal{D}_v (a,r)^-$ for the open Berkovich disc of radius $r$ centered at $a$, and $\mathcal{D}_v (a,r)$ for the closed Berkovich disk of radius $r$ centered at $a$. Note that with our notation, if $v \mid \infty$, then we have $D_v (a,r) = \mathcal{D}_v (a,r)^-$ and $\overline{D}_v (a,r) = \mathcal{D}_v (a,r)$. If $a \in \C_v$ and $r >0$, then we let $\zeta_{a,r} \in \sP_v^1$ denote the point in Berkovich space associated to the disk $\overline{D_v} (a,r)$.

\subsection{Adelic Measures and Adelic Heights}

We now briefly review the construction of adelic heights. Define the standard adelic measure $\lambda = (\lambda_v)_{v \in M_K}$ as follows. If $v \mid \infty$, then let $\lambda$ be the normalized arc length measure on $S^1 \subset \C$. If $v$ is finite, let $\lambda_v = \delta_{\zeta_{0,1}}$ be the point mass at the Gauss point $\zeta_{0,1} \in \sP_v^1$. 

\begin{Def} \label{adelic measure def}
    An adelic measure $\mu = (\mu_v)_{v\in M_K}$ defined over $K$ is a sequence of probability measures $\mu_v$ on $\sP_v^1$ subject to the following conditions.
    \begin{enumerate}[(i)]
        \item For all but finitely many places, $\mu_v = \lambda_v$. \label{adelic measure condition i}
        \item At the remaining places, $\mu_v$ has a continuous potential with respect to $\lambda_v$. That is, $\mu_v - \lambda_v = \Delta g_v$ for some continuous $g_v : \sP_v^1 \to \R$, where $\Delta$ is the measure-valued Laplacian on $\sP_v^1$. \label{adelic measure condition ii}
    \end{enumerate}
\end{Def}

Suppose that $\mu_v$ and $\nu_v$ are signed Borel measures on $\sP_v^1$. When it exists, we define the \textit{mutual energy pairing}
\begin{gather*} 
    (\mu_v , \nu_v)_v = \iint_{\sA_v^1 \times \sA_v^1 \setminus \Diag_v} - \log \abs{z-w}_v d\mu_v (w) d\nu_v (z),
\end{gather*}
where $\Diag_v = \{ (z,z) : z \in \sA_v^1\}$ is the diagonal of $\sA_v^1 \times \sA_v^1$ and $\abs{\cdot}_v$ is the Hsia kernel, a natural extension of the $v$-adic absolute value to $\sP_v^1$. Furthermore, if $\mu = (\mu_v)_{v \in M_K}$ and $\nu = (\nu_v)_{v \in M_K}$ are adelic measures defined over $K$, we write $(\mu, \nu)_v$ or $(\mu_v , \nu_v)$ in place of $(\mu_v , \nu_v)_v$ for convenience. 

Now, suppose that $\mu = (\mu_v)_{v \in M_K}$ is an adelic measure defined over $K$, and for each $v \in M_K$ choose a field embedding $\sigma_v : \overline{K} \hookrightarrow \C_v$ which restricts to an isometry on $(K, \abs{\cdot}_v)$. Extend $\sigma_v$ to a map $\P^1 (\overline{K}) \to \P^1 (\C_v)$ by setting $\sigma_v (\infty) = \infty$. Suppose $\alpha \in \P^1 (\overline{K})$ has Galois conjugates $\alpha_1 ,\cdots, \alpha_n$ over $K$. For each $v \in M_K$, we let 
\begin{gather} \label{point-mass measure}
    [\alpha]_v = \frac{1}{n} \sum_{k=1}^n \delta_{\sigma_v (\alpha_k)}
\end{gather}
be the probability measure supported equally on the $K$-conjugates of $\alpha$. Furthermore we let $[\alpha] = ([\alpha]_v)_{v \in M_K}$ denote the sequence formed by the $[\alpha]_v$.

\begin{Def}
    With the above notation, the adelic height for $\mu$ (with respect to $(\sigma_v)_{v \in M_K}$) is the function $h_\mu : \P^1 (\overline{K}) \to \R$ defined by
    \begin{gather*}
        h_\mu (\alpha) = \frac{1}{2} \sum_{v \in M_K} d_v (\mu - [\alpha], \mu - [\alpha])_v.
    \end{gather*}
\end{Def}

\begin{rmk}
    In \cite{frl.2}, Favre and Rivera-Letelier did not make a choice of embedding $\sigma_v$ at each place. However, the choice of $\sigma_v$ can be nontrivial. For instance, if $K= \Q(i)$, $v \mid \infty$, and if $\alpha$ is a root of $z^2+1$, then $[\alpha]_v$ may be either $\delta_i$ or $\delta_{-i}$. Note that for any $v \in M_K$, another choice of embedding $\sigma_v' : \overline{K} \hookrightarrow \C_v$ such that $\sigma_v |_K = \sigma_v' |_K$ will produce the same $[\alpha]_v$. Note that by Theorem \ref{areal height formula}, areal Weil heights do not depend on the choice of embeddings. 
\end{rmk}

\begin{Def} \label{liminf of height}
    Let $\mu$ be an adelic measure defined over a number field $K$. Then the \textit{essential minimum} of $h_\mu$ is 
    \begin{gather*}
        \mathcal{L} (\mu) := \liminf_{\alpha \in \P^1 (\overline{K})} h_\mu (\alpha).
    \end{gather*}
\end{Def}

For later reference, we now state the two main theorems of \cite{frl.2}.

\begin{thm}[\cite{frl.2}, Th\'eor\`em 1] \label{frl thm 1}
    Let $\mu$ be an adelic measure defined over a number field $K$. Then $h_\mu$ is a Weil height and is essentially nonnegative. In other words, there exists a constant $C > 0$ such that $$\abs{h_\mu (\alpha) - h(\alpha)} \leq C$$ for all $\alpha \in \P^1 (\overline{K})$ and $\mathcal{L} (\mu ) \geq 0$.
\end{thm}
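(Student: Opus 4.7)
My plan is to prove both parts by direct comparison with the \emph{standard} adelic measure $\lambda = (\lambda_v)_{v\in M_K}$ defined just before Definition \ref{adelic measure def}. First I would verify by a place-by-place computation that $h_\lambda = h$. Expanding
\begin{gather*}
    h_\lambda(\alpha) = \tfrac{1}{2}\sum_v d_v\bigl[(\lambda,\lambda)_v - 2(\lambda,[\alpha])_v + ([\alpha],[\alpha])_v\bigr],
\end{gather*}
one has $(\lambda_v,\lambda_v)_v = 0$ at every place (the unit circle has Robin constant $0$; non-archimedeanly the Hsia ``self-distance'' of the Gauss point is $1$), Jensen's formula and its Berkovich analog give $(\lambda_v,[\alpha]_v)_v = -\tfrac{1}{n}\sum_k \log^+|\alpha_k|_v$ using $|\zeta_{0,1}-a|_v = \max(1,|a|_v)$, and the $([\alpha],[\alpha])_v$ terms sum to zero by the product formula applied to each nonzero difference $\alpha_i - \alpha_j \in \overline{K}^\times$. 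Combining these identities yields $h_\lambda(\alpha) = h(\alpha)$.

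To prove $|h_\mu - h| \leq C$, I would expand the difference. Letting $T$ be the finite set of places where $\mu_v \neq \lambda_v$, one obtains
\begin{gather*}
    h_\mu(\alpha) - h_\lambda(\alpha) = \tfrac{1}{2}\sum_{v\in T} d_v (\mu_v,\mu_v)_v - \sum_{v\in T} d_v (\mu_v - \lambda_v, [\alpha]_v)_v.
\end{gather*}
The first sum is a constant depending only on $\mu$. For the second, the decomposition $\mu_v - \lambda_v = \Delta g_v$ with continuous $g_v$ and a standard integration-by-parts identity in Berkovich potential theory (cf.\ \cite{baker-rumely.book}) identifies $(\mu_v - \lambda_v, [\alpha]_v)_v$ with an integral of the continuous function $g_v$ against $[\alpha]_v$ (plus a normalizing constant), hence its absolute value is uniformly controlled by $\|g_v\|_\infty$ regardless of $\alpha$. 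Combined with Step~1 this gives the Weil height property.

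The main obstacle is essential nonnegativity $\mathcal{L}(\mu) \geq 0$. The underlying principle is that the mutual energy pairing defines a \emph{positive semidefinite} quadratic form on real signed measures of total mass zero with continuous potential: $(\sigma,\sigma)_v \geq 0$ for such $\sigma$. This cannot be applied directly to $\sigma = \mu - [\alpha]$ because $[\alpha]_v$ is supported at classical (type I) points whose self-energy is infinite. My plan is to regularize at each place by replacing every Dirac $\delta_{\sigma_v(\alpha_k)}$ with a probability measure supported on a Berkovich disk of radius $\varepsilon$ about that point, yielding a measure $[\alpha]_v^{(\varepsilon)}$ of continuous potential to which the positivity applies. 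Letting $\varepsilon \to 0$ and expanding the correction, the off-diagonal contributions involve $\log|\alpha_i-\alpha_j|_v$ and cancel across all places by the product formula, while the diagonal ``self-Robin'' contributions are of order $\log\varepsilon / n$ and survive only at the finitely many places where $\mu_v$ enters. Optimizing $\varepsilon$ as a function of $n = \deg(\alpha)$ then yields
\begin{gather*}
    h_\mu(\alpha) \geq -\frac{C(\mu)\log n}{n},
\end{gather*}
from which $\mathcal{L}(\mu) \geq 0$ follows as $n \to \infty$. The hardest technical point is executing this regularization coherently on the Berkovich line; Definition \ref{adelic measure def}(ii) supplies precisely the continuity of the potential of $\mu_v$ needed to ensure that the local pairings against $[\alpha]^{(\varepsilon)}$ remain finite and converge properly as $\varepsilon \to 0$.
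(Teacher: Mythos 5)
This statement is quoted from Favre--Rivera-Letelier \cite{frl.2}; the present paper does not prove it, so the comparison is against the strategy of that source. Your Steps~1 and~2 (identifying $h_\lambda$ with $h$ and bounding $h_\mu - h_\lambda$ via the uniform bound on the continuous potentials $g_v$) are correct and are indeed the standard route. Step~3 captures the key ideas (positivity of the mutual energy form on mass-zero measures with bounded potential, regularization of $[\alpha]_v$ at radius $\varepsilon$, with the off-diagonal corrections controlled by the product formula and the diagonal Robin terms of size $\tfrac{\abs{\log\varepsilon}}{n}$), and the final optimization giving $h_\mu(\alpha) \gtrsim -\tfrac{\log n}{n}$ is precisely the FRL-type quantitative bound. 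So you are not on a genuinely different route.

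There is, however, a real gap in the way Step~3 is phrased, and it needs a new ingredient. You propose to \emph{regularize at each place}, then assert that the diagonal Robin contributions ``survive only at the finitely many places where $\mu_v$ enters.'' Both halves of that sentence cannot hold simultaneously: if you regularize with a common $\varepsilon<1$ at every place, the diagonal contribution at a non-archimedean place where all conjugates lie in the unit ball is $(\delta_{\zeta_{\alpha_k,\varepsilon}},\delta_{\zeta_{\alpha_k,\varepsilon}})_v = -\log\varepsilon > 0$ per root, and $\sum_{v\nmid\infty} d_v$ diverges, so the total diagonal correction is infinite. The fix is to regularize only at a \emph{finite} set of places, namely the archimedean ones together with the set $T$ where $\mu_v \neq \lambda_v$, and to show directly that at every remaining non-archimedean place $v$ the diagonal-removed pairing is already nonnegative,
\begin{equation*}
(\lambda_v - [\alpha]_v,\ \lambda_v - [\alpha]_v)_v \;=\; \frac{2}{n}\sum_{k}\log^+\abs{\alpha_k}_v \;-\; \frac{1}{n^2}\sum_{i\neq j}\log\abs{\alpha_i-\alpha_j}_v \;\geq\; 0,
\end{equation*}
which follows from the ultrametric bound $\log\abs{\alpha_i-\alpha_j}_v \leq \log^+\abs{\alpha_i}_v + \log^+\abs{\alpha_j}_v$. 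This local positivity fails archimedeanly (take $n=2$, $\alpha = \pm 1$, which gives $-\tfrac{1}{2}\log 2$), so regularization at $v\mid\infty$ genuinely cannot be avoided even when $\mu_v = \lambda_v$ there; hence the finite set that controls the $\tfrac{\abs{\log\varepsilon}}{n}$ term is $T\cup\{v\mid\infty\}$, not merely the support of $\mu-\lambda$. Once this decomposition is in place, your off-diagonal and modulus-of-continuity estimates go through and yield the claimed $O(\tfrac{\log n}{n})$ lower bound.
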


\begin{thm}[\cite{frl.2}, Th\'eor\`em 2] \label{frl thm 2}
    Let $\mu = (\mu_v)_{v \in M_K}$ be an adelic measure defined over a number field $K$, and let $(\alpha_n)_{n=1}^\infty \in \P^1 (\overline{K})^\N$ be a sequence of distinct points such that $h_\mu (\alpha_n) \to 0$ as $n \to \infty$. Then $[\alpha_n]_v \overset{*}{\to} \mu_v$ for every $v \in M_K$.
\end{thm}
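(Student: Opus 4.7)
The plan is to reduce equidistribution to a potential-theoretic convergence statement at each place, and then invoke weak-$*$ compactness together with positive-definiteness of the mutual energy pairing. First, by Theorem~\ref{frl thm 1}, $h_\mu$ is a Weil height, so the hypothesis $h_\mu(\alpha_n) \to 0$ implies that $h(\alpha_n)$ is bounded. Since the $\alpha_n$ are distinct, Northcott's theorem forces $[K(\alpha_n):K] \to \infty$.

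Next I would rewrite the height place-by-place. Because $\sum_{v \in M_K} d_v \log\abs{\beta}_v = 0$ for $\beta \in K^\times$, applying the product formula to the differences $\alpha_i - \alpha_j$ shows $\sum_v d_v ([\alpha_n]_v, [\alpha_n]_v)_v = 0$. Therefore
\begin{gather*}
2 h_\mu(\alpha_n) \;=\; \sum_{v \in M_K} d_v \,(\mu_v - [\alpha_n]_v,\, \mu_v - [\alpha_n]_v)_v,
\end{gather*}
and one establishes at every place a uniform discriminant-type lower bound of the form $(\mu_v - [\alpha_n]_v, \mu_v - [\alpha_n]_v)_v \geq -C_v \log(\deg \alpha_n)/\deg\alpha_n$, using a Mahler/Rumely estimate for the standard measure $\lambda_v$ and its continuous perturbation by the potential $g_v$ from Definition~\ref{adelic measure def}\,(\ref{adelic measure condition ii}). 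Combined with $h_\mu(\alpha_n) \to 0$ and $\deg\alpha_n \to \infty$, this forces each individual energy $(\mu_v - [\alpha_n]_v, \mu_v - [\alpha_n]_v)_v \to 0$.

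To finish, fix $v \in M_K$. Since $\sP_v^1$ is compact Hausdorff, by weak-$*$ compactness of probability measures it suffices to show every weak-$*$ subsequential limit $\nu_v$ of $([\alpha_n]_v)$ equals $\mu_v$. The functional $\nu \mapsto (\mu_v - \nu, \mu_v - \nu)_v$ is lower semicontinuous on probability measures on $\sP_v^1$ (a standard fact for the $-\log\abs{\cdot}_v$-kernel, using that $\mu_v$ has a continuous potential with respect to $\lambda_v$, so its cross-term is weak-$*$ continuous, while the self-energy of $\nu_v$ is lower semicontinuous). Hence $(\mu_v - \nu_v, \mu_v - \nu_v)_v \leq \liminf_k (\mu_v - [\alpha_{n_k}]_v, \mu_v - [\alpha_{n_k}]_v)_v = 0$. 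Since the energy pairing is positive-definite on signed Borel measures of total mass zero admitting a continuous potential, we conclude $\nu_v = \mu_v$.

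The main obstacle is the interplay between places. The local energies can individually be negative, and it is only after summing against the weights $d_v$ that one recovers the clean global positivity of $h_\mu$ stated in Theorem~\ref{frl thm 1}. Extracting local convergence from global smallness therefore requires the uniform one-sided bounds at each place; at non-archimedean $v$ these rely on the Baker–Rumely potential theory on the Berkovich projective line, in particular the characterization of finite-energy signed measures and the positive-definiteness of the Laplacian-based inner product, which is the technical heart of the argument in \cite{frl.2}.
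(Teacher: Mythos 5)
This theorem is imported verbatim from Favre--Rivera-Letelier \cite{frl.2} (Th\'eor\`eme~2); the paper you are reading cites it and does not supply a proof, so there is no in-house argument to measure your sketch against. What you have written is, rather, an outline of the argument in \cite{frl.2} itself.

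As a description of that argument's shape, your outline is accurate: expand $2h_\mu(\alpha_n) = \sum_v d_v(\mu_v-[\alpha_n]_v,\mu_v-[\alpha_n]_v)_v$, use the product-formula identity $\sum_v d_v([\alpha_n]_v,[\alpha_n]_v)_v=0$, prove a one-sided bound of the form $(\mu_v-[\alpha_n]_v,\mu_v-[\alpha_n]_v)_v \geq -\varepsilon_{n,v}$ with $\sum_v d_v\varepsilon_{n,v}\to 0$ so that each local energy is squeezed to zero, and then pass to a subsequential weak-$*$ limit $\nu_v$ and invoke positive-definiteness of the energy pairing on mass-zero signed measures to force $\nu_v=\mu_v$.

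However, two of the steps you state as if they were routine are exactly where the real work lies, and as literally written they fail. First, the pairing $(\;\cdot\;,\;\cdot\;)_v$ in this paper is the \emph{off-diagonal} double integral, which is finite for atomic measures precisely because the diagonal is removed. The standard lower semicontinuity of energy under weak-$*$ convergence is a statement about the \emph{full} double integral of a lower semicontinuous kernel; it does not automatically transfer to the off-diagonal functional when the approximants (and possibly the limit) have atoms. Likewise your ``discriminant-type lower bound'' is not a direct estimate on $(\mu_v-[\alpha_n]_v,\mu_v-[\alpha_n]_v)_v$. Favre and Rivera-Letelier handle both issues by regularizing $[\alpha_n]_v$ (mollifying at archimedean places, replacing $\delta_{\sigma_v\alpha_k}$ by $\delta_{\zeta_{\sigma_v\alpha_k,\epsilon}}$ at non-archimedean places), proving the needed inequalities for the regularized measures, and carefully controlling the regularization cost; that mechanism is the technical heart of the proof and is absent from your sketch. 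Second, $-\log\abs{z-w}_v$ is not bounded below on $\sA_v^1\times\sA_v^1$, so the lower semicontinuity you invoke requires first localizing to a bounded region (possible since $h(\alpha_n)$ is bounded) or reformulating in terms of an Arakelov--Green's function normalized against $\lambda_v$, rather than the raw kernel. Without both of these repairs, the inequality $(\mu_v-\nu_v,\mu_v-\nu_v)_v\leq 0$ for the subsequential limit does not follow from the limit of local energies, and the conclusion $\nu_v=\mu_v$ is unsupported.
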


\subsection{The Arakaelov-Zhang Pairing} One important tool for adelic measures which we will utilize is the Arakelov-Zhang pairing. Suppose that $\mu = (\mu_v)_{v \in M_K}$ and $\nu = (\nu_v)_{v \in M_K}$ are adelic measures defined over a number field $K$. The Arakelov-Zhang pairing of $\mu$ and $\nu$ is defined by
\begin{gather} \label{arakelov-zhang pairing def}
    \AZ{\mu}{\nu} = \frac{1}{2} \sum_{v \in M_K} d_v (\mu - \nu , \mu - \nu )_v. 
\end{gather}
The Arakelov-Zhang pairing is the square of a metric on the space of adelic measures \cite[Theorem 1]{fili_ui}. Note that if $\mu$ is defined over $K$ and $\nu$ is defined over $L$, then both $\mu$ and $\nu$ can be defined over the compositum $KL$ and we may still compute $\AZ{\mu}{\nu}$. One result which we will find particularly helpful is a result of Fili, which extends \cite[Theorem 1]{PST} to the setting of adelic measures.

\begin{thm}[\cite{fili_ui}, Theorem 9] \label{pst theorem}
    Let $\mu$ and $\nu$ be adelic measures defined over a number field $K$. Suppose that $(\alpha_n)_{n=1}^\infty \in \P^1 (\overline{K})^\N$ is a sequence of distinct points. If $h_\mu (\alpha_n) \to 0$ as $n\to\infty$, then $h_\nu (\alpha_n) \to \AZ{\mu}{\nu}$ as $n \to \infty$.
\end{thm}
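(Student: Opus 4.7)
The plan is to prove Theorem \ref{pst theorem} by first deriving a polarization identity that isolates $h_\nu(\alpha_n) - \AZ{\mu}{\nu}$ as a sum of two terms, one of which is $h_\mu(\alpha_n)$, and then controlling the remaining cross term by a two-level Cauchy–Schwarz inequality (locally on $\sP_v^1$, and then globally over $v \in M_K$). Writing $\nu - [\alpha_n] = (\nu - \mu) + (\mu - [\alpha_n])$ and expanding $(\nu - [\alpha_n], \nu - [\alpha_n])_v$ via the bilinearity of the mutual energy pairing on signed measures of total mass zero yields, after multiplying by $d_v/2$ and summing over $v \in M_K$, the identity
\begin{gather*}
    h_\nu(\alpha_n) - \AZ{\mu}{\nu} - h_\mu(\alpha_n) = \sum_{v \in M_K} d_v (\nu - \mu, \mu - [\alpha_n])_v.
\end{gather*}
Since $h_\mu(\alpha_n) \to 0$ by hypothesis, the theorem reduces to showing that the right-hand side tends to $0$.

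To estimate the cross term, I would exploit the fact that the mutual energy pairing is positive semi-definite on the class of signed measures of total mass zero that arise here; this is the nonnegativity property that underlies Theorem \ref{frl thm 1}. Positivity provides a Cauchy–Schwarz inequality locally, giving
\begin{gather*}
    \abs{(\nu - \mu, \mu - [\alpha_n])_v} \leq \sqrt{(\nu - \mu, \nu - \mu)_v} \cdot \sqrt{(\mu - [\alpha_n], \mu - [\alpha_n])_v}.
\end{gather*}
A second application of Cauchy–Schwarz, now to the discrete weighted sum over $v \in M_K$ with weights $d_v$, then produces
\begin{gather*}
    \Bigl| \sum_{v \in M_K} d_v (\nu - \mu, \mu - [\alpha_n])_v \Bigr| \leq \sqrt{2\AZ{\mu}{\nu}} \cdot \sqrt{2 h_\mu(\alpha_n)},
\end{gather*}
which goes to zero with $h_\mu(\alpha_n)$, completing the proof.

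The main obstacle is justifying the local positivity of the mutual energy pairing when one of the measures is the atomic $[\alpha_n]_v$, whose classical self-energy diverges on the diagonal. In the Favre–Rivera-Letelier setup the diagonal is excised from the domain of integration, and one must invoke the Berkovich-space version of Frostman's energy inequality (see \cite{baker-rumely.book}) to conclude that $(\sigma_v, \sigma_v)_v \geq 0$ whenever $\sigma_v$ is a signed measure of total mass zero for which the pairing is finite. Note that only the finitely many places where $\mu_v \neq \lambda_v$ or $\nu_v \neq \lambda_v$ contribute nontrivially to $(\nu - \mu, \nu - \mu)_v$, so the constant $\AZ{\mu}{\nu}$ appearing in the Cauchy–Schwarz bound is a finite real number; at all other places the cross term $(\nu - \mu, \mu - [\alpha_n])_v$ vanishes automatically. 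Once the local positivity is in hand, the remainder of the argument is the straightforward bilinear computation sketched above.
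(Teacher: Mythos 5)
The paper itself does not prove Theorem~\ref{pst theorem}; it is cited directly from \cite[Theorem~9]{fili_ui}, so there is no internal argument to compare against, and your proposal must be judged on its own. Your polarization identity is correct, and reducing the theorem to an estimate of the cross term $\sum_v d_v(\nu - \mu, \mu - [\alpha_n])_v$ is a natural move. The gap is in the step you yourself flag as the ``main obstacle.'' The assertion that $(\sigma_v, \sigma_v)_v \geq 0$ for any signed measure $\sigma_v$ of total mass zero whose excised-diagonal pairing is finite is false: Frostman's energy inequality (classical or Berkovich) requires finite \emph{un-excised} energy, which $\mu_v - [\alpha_n]_v$ never has when $[\alpha_n]_v$ is atomic. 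In fact the local excised energy can be strictly negative. For the standard adelic measure $\lambda$ and $\alpha = \zeta_p$ a primitive $p$-th root of unity with $p \geq 3$, a direct computation gives
\[
(\lambda - [\zeta_p], \lambda - [\zeta_p])_\infty = -\frac{(p-2)\log p}{(p-1)^2} < 0,
\]
and the global identity $\sum_v d_v(\lambda - [\zeta_p], \lambda - [\zeta_p])_v = 2h(\zeta_p) = 0$ is restored only by the compensating positive term at $v = p$. The naive \emph{global} Cauchy--Schwarz fares no better, since the global form is not positive semi-definite on the space containing $\mu - [\alpha]$ either: Example~\ref{height of 0} in the paper shows $h_{\rho_\mathbf{r}}(0)$ can be negative. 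So neither your local nor your global Cauchy--Schwarz is available as stated.

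The way this is actually handled in \cite{frl.2} and \cite{fili_ui} is by regularization: replace each atom $\delta_{\sigma_v(\alpha_{n,k})}$ by an $\varepsilon$-smoothed probability measure (arc length on a small circle at archimedean $v$, the point mass $\delta_{\zeta_{\sigma_v(\alpha_{n,k}),\varepsilon}}$ at nonarchimedean $v$). The smoothed differences have finite un-excised energy, so Cauchy--Schwarz is legitimate for them, and the product formula is then used to show that the regularization correction, of size comparable to $-\log\varepsilon / \deg(\alpha_n)$ at each place, can be arranged to vanish when summed over $v$ with weights $d_v$. Without this step your argument does not close. An alternative you might prefer, which avoids Cauchy--Schwarz entirely, is to deduce the statement from Theorem~\ref{frl thm 2}: since $h_\mu(\alpha_n) \to 0$ forces $[\alpha_n]_v \overset{*}{\to} \mu_v$ at every $v$, and $\nu_v - \mu_v$ has continuous potential at the finitely many relevant places, one gets $\sum_v d_v(\nu - \mu, [\alpha_n])_v \to \sum_v d_v(\nu - \mu, \mu)_v$; combining with $\sum_v d_v(\mu, [\alpha_n])_v = h_\mu(\infty) - h_\mu(\alpha_n) \to h_\mu(\infty)$ and the FRL height formula recovers $h_\nu(\alpha_n) \to \AZ{\mu}{\nu}$ without touching any energy positivity.
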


\subsection{\texorpdfstring{$\gamma (\mathbf{r})$}{gamma r} as Adelic Capacity}

Here we interpret $\gamma (\mathbf{r})$ as the capacity of a certain adelic set. Let $S \subset M_K$ be a finite set of places of $M_K$ and let $\mathbf{r} = (r_v)_{v \in S} \in (0,\infty)^S$ be a tuple of positive real numbers indexed by $S$. We define the quantity 

\begin{gather*}
    \gamma (\mathbf{r}) = \prod_{v \in S} r_v^{[K_v : \Q_v]}.
\end{gather*}

Here, we will interpret $\gamma (\mathbf{r})$ as the capacity of a certain adelic set. We refer the reader to \cite[\textsection 6.6]{baker-rumely.book} for the relevant definitions. Let
\begin{gather} \label{E_r}
    \E_\mathbf{r} = \prod_{v \in M_K} E_{\mathbf{r}, v},
\end{gather}
where 
\begin{gather*} 
    E_{\mathbf{r},v} = \begin{cases}
        \mathcal{D}_v (0,r_v) & \text{if } v \in S,\\
        \mathcal{D}_v (0,1) & \text{if } v \notin S.
    \end{cases}
\end{gather*}

Then $\E_{\mathbf{r}}$ is a $K$-symmetric compact Berkovich adelic set, and its capacity is 
\begin{gather} \label{capacity of E}
    \gamma_\infty (\E_\mathbf{r}) = \prod_{v \in S} r_v^{[K_v : \Q_v]} = \gamma (\mathbf{r}).
\end{gather}

\section{Definition of Areal Weil Heights} \label{definition of areal weil heights}

\subsection{Measure Theory}

We begin with Lemma \ref{construction of measures}, which gives a generalization of defining a measure via a sum of measures. We will use then use this result to define local measures for areal Weil heights. By convention, we will always assume our measures are positive measures, although it is possible to extend our results to more general settings.

\begin{lemma} \label{construction of measures} 
    Let $(X,\cS, \mu)$ be a measure space and let $(Y, \cT)$ be a measurable space. Denote the space of measures on $(Y, \cT)$ by $\cM$ and suppose that $\lambda : X \to \cM$. For convenience, denote $\lambda (x)$ by $\lambda_x$. Finally, let $\Lambda : \cT \to [0,\infty]$ be defined by 
    \begin{gather} \label{construction of measures equation}
        \Lambda (E) = \int_X \lambda_x (E) \, d\mu (x).
    \end{gather}
    
    If for every $E \in \cT$ the function $x \mapsto \lambda_x (E)$ is $\mu$-measurable, then $\Lambda \in \cM$. In this case, for any $\cT$-measurable function $f : Y \to [-\infty, \infty]$ such that $\int_Y f \, d\lambda_x$ is defined for all $x \in X$, the function
    \begin{gather*}
        x \mapsto \int_Y f(y) \, d\lambda_x (y)
    \end{gather*}
    is $\cS$-measurable. Furthermore, if $\int_X f \, d\Lambda$ is well defined, then $$\int_Y f d\Lambda = \int_X \left ( \int_Y f(y) \, d\lambda_x (y) \right ) d\mu (x).$$

    Finally, if $\lambda_x$ is a probability measure for all $x$ and if $\mu$ is also a probability measure, then $\Lambda$ is a probability measure.
\end{lemma}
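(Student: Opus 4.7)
The plan is to prove the four assertions in the order they appear: first that $\Lambda$ is a countably additive set function on $\cT$, then the measurability of the integral with respect to $x$, then the Fubini-type identity, and finally the probability-measure statement. Throughout, I will use the standard measure-theoretic bootstrap, pushing from indicator functions up to general measurable functions via simple functions and monotone convergence.

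First, I would verify that $\Lambda$ is a measure. The equality $\Lambda(\emptyset)=0$ is immediate from $\lambda_x(\emptyset)=0$ for every $x$. For countable additivity, let $(E_n)_{n=1}^\infty$ be a sequence of pairwise disjoint sets in $\cT$ and observe that, for each $x \in X$, countable additivity of $\lambda_x$ gives $\lambda_x\bigl(\bigcup_n E_n\bigr) = \sum_n \lambda_x(E_n)$. The partial sums $g_N(x) = \sum_{n=1}^N \lambda_x(E_n)$ are $\mu$-measurable (by hypothesis on each $\lambda_x(E_n)$) and increase monotonically to $g(x) = \lambda_x\bigl(\bigcup_n E_n\bigr)$. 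The monotone convergence theorem applied on $(X,\cS,\mu)$ yields
\begin{gather*}
    \Lambda\bigl(\textstyle\bigcup_n E_n\bigr) = \int_X g \, d\mu = \lim_{N\to\infty} \int_X g_N \, d\mu = \sum_{n=1}^\infty \int_X \lambda_x(E_n)\, d\mu(x) = \sum_{n=1}^\infty \Lambda(E_n).
\end{gather*}

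Next, I would prove the measurability of $x \mapsto \int_Y f \, d\lambda_x$ together with the iterated-integral identity by the standard four-step bootstrap. For $f = \mathbf{1}_E$ with $E \in \cT$, the statement is exactly the hypothesis plus the definition of $\Lambda$. Linearity then handles nonnegative simple functions. For a general nonnegative $\cT$-measurable $f$, choose a pointwise-increasing sequence of simple functions $s_n \uparrow f$; then for each fixed $x$, the monotone convergence theorem on $(Y,\cT,\lambda_x)$ gives $\int_Y s_n \, d\lambda_x \uparrow \int_Y f \, d\lambda_x$, so measurability in $x$ passes to the limit, and a second application of monotone convergence on $(X,\cS,\mu)$ combined with monotone convergence on $(Y,\cT,\Lambda)$ yields
\begin{gather*}
    \int_Y f \, d\Lambda = \lim_{n \to \infty} \int_X \left( \int_Y s_n \, d\lambda_x \right) d\mu(x) = \int_X \left( \int_Y f \, d\lambda_x \right) d\mu(x).
\end{gather*}
For a general measurable $f$ with $\int_Y f \, d\lambda_x$ defined for every $x$ and $\int_Y f \, d\Lambda$ defined, I would split $f = f^+ - f^-$ and apply the nonnegative case to each piece; the well-definedness hypotheses ensure that no $\infty - \infty$ ambiguity arises.

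Finally, if every $\lambda_x$ is a probability measure and $\mu(X)=1$, then $\Lambda(Y) = \int_X \lambda_x(Y)\, d\mu(x) = \int_X 1 \, d\mu = 1$, so $\Lambda$ is a probability measure. I do not expect any serious obstacle here: the main subtlety is simply being careful that the measurability hypothesis on $x \mapsto \lambda_x(E)$ propagates through the bootstrap, which is precisely what the simple-function step ensures. The only place requiring any care is the general (not necessarily nonnegative) case of the iterated-integral formula, where one must invoke the well-definedness assumptions explicitly to justify subtracting the integrals of $f^+$ and $f^-$.
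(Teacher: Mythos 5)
Your proposal is correct and follows essentially the same route as the paper: verify countable additivity via monotone convergence on partial sums, then bootstrap the measurability and Fubini-type identity from indicators to simple functions to nonnegative $f$ by monotone convergence, and finally to general $f$ by splitting $f = f^+ - f^-$ and invoking the well-definedness hypotheses. The paper's argument is organized nearly identically, with only the cosmetic difference that it passes directly from simple functions to the $f^\pm$ decomposition rather than explicitly isolating the nonnegative case as a separate step.
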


\begin{proof}
    First note that $\Lambda$ is well defined since $x \mapsto \lambda_x (E)$ is measurable and nonnegative for any $E \in \cT$. Also observe that $$\Lambda (\emptyset) = \int \lambda_x (\emptyset ) \, d\mu (x) = \int 0 \, d\mu (x) = 0.$$ Now, suppose that $\{E_n\}_{n=1}^\infty$ is a disjoint sequence in $\cT$. Then $\sum_{j=1}^n \lambda_x (E_n)$ is a nondecreasing sequence of measurable functions $X \to [0, \infty]$. So using the monotone convergence theorem we compute
    \begin{align*}
        \Lambda \left ( \bigcup_{n=1}^\infty E_n \right ) = & \int \lambda_x \left ( \bigcup_{n=1}^\infty E_n \right ) d\mu (x) \\
        = & \int \sum_{n=1}^\infty \lambda_x (E_n) \, d\mu (x) \\
        = & \sum_{n=1}^\infty \int \lambda_x (E_n) \, d\mu (x) \\
        = & \sum_{n=1}^\infty \Lambda (E_n).
    \end{align*}
    Thus we conclude that $\Lambda \in \cM$.

    To prove the next parts of the theorem, we first consider a nonnegative simple function $s(y) = \sum_{j=1}^n c_j \chi_{E_j} (y)$, where $c_1, \cdots, c_n \in (0,\infty]$ are distinct and $E_1 ,\cdots, E_n \in \cT$ are disjoint. Since $s(y)$ is nonnegative and $\cT$-measurable, it is immediate that $\int_Y s \, d\lambda_x$ is well defined for all $x \in X$ and that $\int_Y s \, d\Lambda$ is well defined. Also, observe that $$\int_Y s(y) \, d\lambda_x = \sum_{j=1}^n c_j \lambda_x (E_j),$$ so that $$x \mapsto \sum_{j=1}^n c_j \lambda_x (E_j)$$ is a linear combination of $\cS$-measurable functions. Thus $x \mapsto \int_Y s d\lambda_x$ is $\cS$-measurable. Furthermore,
    \begin{align*}
        \int_X \left ( \int_Y s(y) \, d\lambda_x (y) \right ) d\mu (x) = & \int_X \left ( \sum_{j=1}^n c_j \lambda_x (E_j) \right ) d\mu (x) \\
        = & \sum_{j=1}^n c_j \int_X \lambda_x (E_j) \, d\mu (x) \\
        = & \sum_{j=1}^n c_j \Lambda (E_j) \\
        = & \sum_{j=1}^n c_j \int_Y \chi_{E_j} (y) \, d \Lambda (y) \\
        = & \int_Y s(y) \, d\Lambda (y).
    \end{align*}
    Thus our theorem holds for nonnegative simple functions.

    We will apply our result on nonnegative simple functions to the general case. Consider a $\cT$-measurable function $f : Y \to [-\infty, \infty]$ such that $\int_Y f \, d\lambda_x$ is well defined for all $x \in X$. Write $f = f^+ - f^-$, where $f^+ = \max \{ 0, f\}$ and $f^- = \max \{0, -f\}$. Note that since $f^+$ and $f^-$ are nonnegative $\cT$-measurable functions, $\int_Y f^+ \, d\lambda_x$ and $\int_Y f^- \, d\lambda_x$ are well defined for all $x \in X$, and that by assumption $\int_Y f \, d\lambda_x = \int_Y f^+  \, d\lambda_x - \int_Y f^- \, d\lambda_x$ is also well defined for all $x \in X$. Now, pick monotone increasing sequences of simple functions $\{s_n^+\}_{n=1}^\infty$ and $\{s_n^-\}_{n=1}^\infty$ such that $s_n^+$ converges to $f^+$ pointwise and $s_n^-$ converges to $f^-$ pointwise. Using the monotone convergence theorem we have for any $x \in X$ that 
    \begin{align*}
        \int_Y f \, d\lambda_x = & \int_Y f^+ \,  d\lambda_x - \int_Y f^- \, d\lambda_x \\
        = & \lim_{n\to\infty} \int_Y s_n^+ \, d\lambda_x - \lim_{n\to\infty} \int_Y s_n^- \, d\lambda_x.
    \end{align*}
    Both $x \mapsto \lim_{n\to\infty} \int_Y s_n^+ \, d\lambda_x$ and $x\mapsto \lim_{n\to\infty} \int_Y s_n^-  \, d\lambda_x$ are pointwise limits of measurable functions, so $x \mapsto \int_Y f \, d\lambda_x$ is measurable. 

    If $\int_Y f \, d\Lambda$ is well defined, then we have, using the monotone convergence theorem, that
    \begin{align*}
        \int_Y f \, d\Lambda = & \int_Y f^+ \, d\Lambda - \int_Y f^- \, d\Lambda \\
        = & \lim_{n\to\infty} \int_Y s_n^+ \, d\Lambda - \lim_{n\to\infty} \int_Y s_n^- \, d\Lambda \\
        = & \lim_{n\to\infty} \int_X \left ( \int_Y s_n^+ \, d\lambda_x \right ) d\mu  -  \lim_{n\to\infty} \int_X \left ( \int_Y s_n^- \, d\lambda_x \right ) d\mu \\
        = & \int_X \left ( \int_Y f^+ \, d\lambda_x \right ) d\mu - \int_X \left ( \int_Y f^- \, d\lambda_x \right ) d\mu \\
        = & \int_X \left (\int_Y f(y) \, d\lambda_x (y) \right ) d\mu (x).
    \end{align*}

    Finally, if we have that $\mu$ is a probability measure and that $\lambda_x$ is a probability measure for all $x \in X$, then 
    \begin{gather*}
        \Lambda (Y) = \int \lambda_x (Y) \, d\mu (x) = \int 1 \, d\mu (x) = 1.\qedhere 
    \end{gather*}
\end{proof}

With the notation from Lemma \ref{construction of measures}, we will write $\Lambda = \int_X \lambda_x d\mu(x)$ for a measure defined by Equation (\ref{construction of measures equation}).

\subsection{The Areal Adelic Measure at a Finite Place}
In this section we construct the local measures for areal Weil heights at finite places. To motivate our construction, we begin with the complex setting. Let $R>0$ and let $\rho_R$ be the normalized area measure on the disc $D(0,R) \subset \C$, i.e. $$d\rho_R = \left . \frac{dA}{\pi R^2} \right |_{D(0,R)}.$$

We can actually define $\rho_R$ using Lemma \ref{construction of measures}. To do this, for any $r > 0$ let $$d\lambda_r : = \left. \frac{d\theta}{2\pi} \right |_{\partial D(0,r)}$$ be the normalized angular measure on $\partial D(0,r)$ and let $$d\mu_R : = \left. \frac{2rdr}{R^2} \right |_{D(0,R)}.$$ By Lemma \ref{construction of measures} we have that 
\begin{gather} \label{Lambda for rho_R}
    \Lambda : = \int_0^R \lambda_r \, d\mu_R (r)  
\end{gather}
is a probability measure. If $E \subset \C$ is a Borel subset, then
\begin{align*}
    \Lambda (E) = & \int_0^R \lambda_r (E) \, d\mu_R (r) \\
    = & \frac{2}{R^2} \int_0^R \left ( \frac{1}{2\pi} \int_0^{2\pi} \chi_E (re^{i\theta}) \, d\theta \right ) rdr \\
    = & \rho_R (E).
\end{align*}
Thus we have $\Lambda = \rho_R$.

We now consider the nonarchimedean setting. Let $K$ be a number field and let $v \in M_K$ be a finite place of $K$. We aim to define a Borel probability measure $\rho_{R,v}$ on $\sP_v^1$ which is analogous to $\rho_R$. We note that $D_v (0,1)$ is not compact, so it does not have a Haar measure. Instead of this idea, we construct $\rho_{R,v}$ so that its potential is similar to the potential of $\rho_R$. In the complex case, we defined $\lambda_r$ as the normalized Lebesgue measure on $\partial D(0,r)$. For the nonarchimedean case, we have that the boundary of the open Berkovich disc $\cD_v (0,r)^- \subset \sP_v^1$ is the point $\zeta_{0,r}$.  So we define $\lambda_{r,v} = \delta_{\zeta_{0,r}}$, the point mass on $\zeta_{0,r} = \partial \cD_v (0,r)^-$. Note that the potential functions of $\lambda_r$ and $\lambda_{r,v}$ have similar expressions, see Equations (\ref{potential of lambda v infinite}) and (\ref{potential of lambda v finite}). Inspired by (\ref{Lambda for rho_R}), using Lemma \ref{construction of measures} we define
\begin{gather*}
    \rho_{R,v} = \int_0^R \lambda_{r,v} \, d\mu_R (r).
\end{gather*}
We claim that $\rho_{R,v}$ is a Borel probability on $\sP_v^1$. We wish to apply Lemma \ref{construction of measures}, so we need to show that for any Borel measurable $E \subset \sP_v^1$, the function $r \mapsto \lambda_{r,v} (E)$ is measurable. More generally, we have the following.

\begin{prop} \label{areal case is measurable} 
    Suppose that $a \in \C_v$ with $\abs{a}_v < R$ and that $E \subset \sP_v^1$ is measurable. Then the function $\varphi : [0,R] \to \{0,1\}$ defined by $\varphi (r) = \delta_{\zeta_{a,r}} (E)$ is measurable.
\end{prop}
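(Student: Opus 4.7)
The plan is to rewrite $\varphi(r) = \chi_E(\zeta_{a,r}) = (\chi_E \circ \psi)(r)$, where $\psi : [0,R] \to \sP_v^1$ is the parameterization $\psi(r) = \zeta_{a,r}$, and then prove that $\psi$ is continuous. Once continuity is established, for any Borel set $E \subset \sP_v^1$ the preimage $\psi^{-1}(E) \subset [0,R]$ is Borel, so $\varphi = \chi_{\psi^{-1}(E)}$ is measurable, which is the desired conclusion.

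To prove $\psi$ is continuous, I would work with a standard subbase for the Berkovich topology on $\sP_v^1$: the sets $\{\zeta \in \sA_v^1 : |T-b|_\zeta < s\}$ and $\{\zeta \in \sA_v^1 : |T-b|_\zeta > s\}$ for $b \in \C_v$ and $s > 0$, together with neighborhoods of $\infty$, which intersect $\psi([0,R])$ trivially since $\psi$ takes values in the affine part $\sA_v^1$. The key input is the Hsia kernel formula
\[
|T-b|_{\zeta_{a,r}} = \sup_{z \in \overline{D_v}(a,r)} |z-b|_v = \max(|a-b|_v, r),
\]
which is a continuous function of $r \in [0,R]$. Consequently the preimage
\[
\psi^{-1}(\{\zeta : |T-b|_\zeta < s\}) = \{r \in [0,R] : \max(|a-b|_v, r) < s\}
\]
is a (possibly empty) relatively open subinterval of $[0,R]$, and likewise for preimages of conditions of the form $|T-b|_\zeta > s$. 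Since $\psi^{-1}$ respects finite intersections and arbitrary unions, the preimage of any open set in $\sP_v^1$ is open in $[0,R]$, which proves continuity.

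The main obstacle is minor, essentially bookkeeping: one must enumerate cases (whether $|a-b|_v < s$, $=s$, or $>s$, and whether $s \leq R$) to confirm openness of the preimages, but none of the cases is delicate. In particular, the hypothesis $\abs{a}_v < R$ plays no role in the argument above; it only becomes relevant in the subsequent construction of $\rho_{R,v}$. A slick alternative avoiding the explicit case analysis is a $\sigma$-algebra argument: the collection $\{E \subset \sP_v^1 : \psi^{-1}(E) \text{ is Borel}\}$ is a $\sigma$-algebra (because preimage commutes with complement and countable union), and the computation of $\psi^{-1}(\cD_v(b,s)^-)$ above shows it contains a generating class for the Borel $\sigma$-algebra of $\sP_v^1$, hence contains every Borel set.
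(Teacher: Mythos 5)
Your proposal is correct and takes essentially the same approach as the paper: both factor $\varphi = \chi_E \circ \iota$ where $\iota(r) = \zeta_{a,r}$, and invoke continuity of $\iota$ to conclude that $\varphi$ is a composition of measurable functions. The paper asserts the continuity of $\iota$ without proof, whereas you supply the verification via the Hsia kernel formula $\abs{T-b}_{\zeta_{a,r}} = \max(\abs{a-b}_v, r)$ and a subbase for the Berkovich topology.
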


\begin{proof}
    Let $\iota : [0,R] \to \sP_v^1$ be defined by $\iota (r) = \zeta_{a,r}$. Then $\iota$ is a continuous embedding of $[0,R]$ into $\sP_v^1$. Let $\chi_E$ denote the characteristic function for $E$, which is measurable since $E$ is measurable. Observe that $\chi_E (\zeta_{a,r}) = \delta_{\zeta_{a,r}} (E)$. So $\varphi = \chi_E \circ \iota$ is a composition of measurable functions. 
\end{proof}

Proposition \ref{areal case is measurable} implies that $\rho_{R,v}$ satisfies the conditions of Lemma \ref{construction of measures}, so we have the following.

\begin{cor} \label{rho is a probability}
    With the notation as above, $\rho_{R, v}$ is a positive Borel probability measure on $\sP_v^1$.
\end{cor}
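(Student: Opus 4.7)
The plan is to obtain Corollary \ref{rho is a probability} as a direct application of Lemma \ref{construction of measures}, with Proposition \ref{areal case is measurable} supplying the one nontrivial hypothesis. Since $\rho_{R,v}$ has been defined as $\int_0^R \lambda_{r,v}\,d\mu_R(r)$ with $\lambda_{r,v} = \delta_{\zeta_{0,r}}$, the three inputs needed for Lemma \ref{construction of measures} are already in place: the measure space $(X,\mathcal{S},\mu) = ([0,R],\text{Borel}, \mu_R)$, the measurable space $(Y,\mathcal{T}) = (\sP_v^1,\text{Borel})$, and the assignment $\lambda\colon r \mapsto \delta_{\zeta_{0,r}}$ taking values in the space of measures on $\sP_v^1$.

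First I would check the measurability hypothesis of Lemma \ref{construction of measures}, namely that for every Borel $E \subset \sP_v^1$ the function $r \mapsto \lambda_{r,v}(E) = \delta_{\zeta_{0,r}}(E)$ is Borel measurable on $[0,R]$. This is precisely Proposition \ref{areal case is measurable} applied with $a=0$ (which is valid because $\abs{0}_v = 0 < R$). Granted this, Lemma \ref{construction of measures} immediately produces a Borel measure $\rho_{R,v}$ on $\sP_v^1$, and positivity is automatic since each $\lambda_{r,v}$ is a positive measure.

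To upgrade from a measure to a probability measure, I would appeal to the final clause of Lemma \ref{construction of measures}: each $\lambda_{r,v} = \delta_{\zeta_{0,r}}$ is manifestly a probability measure, and $\mu_R$ is a probability measure on $[0,R]$ since
\begin{equation*}
    \mu_R([0,R]) = \int_0^R \frac{2r}{R^2}\,dr = 1.
\end{equation*}
Hence $\rho_{R,v}(\sP_v^1) = 1$, completing the proof. There is no real obstacle here — the genuine work has been front-loaded into Lemma \ref{construction of measures} and Proposition \ref{areal case is measurable} — so the proof should amount to little more than verifying that those two results apply verbatim in the present setting.
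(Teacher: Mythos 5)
Your proof is correct and follows essentially the same route as the paper: Proposition~\ref{areal case is measurable} (with $a=0$) supplies the measurability hypothesis of Lemma~\ref{construction of measures}, and the probability-measure conclusion is the final clause of that lemma since $\mu_R$ and each $\delta_{\zeta_{0,r}}$ are probability measures. The only (harmless) addition beyond the paper is the explicit one-line check that $\mu_R([0,R])=1$.
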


 Corollary \ref{rho is a probability} allows us to make the following definition. 

\begin{Def} \label{def of areal measure at v}
    Let $K$ be a number field, let $v \in M_K$, and let $R>0$. The \textit{areal measure} at $v$ with radius $R>0$ is $$\rho_{R,v} = \int_0^R \lambda_{r,v} \, d\mu_R (r),$$ where $d\lambda_{r,v} = \left . \frac{d\theta}{2\pi} \right |_{\partial D(0,r)}$ when $v \mid \infty$ and $\lambda_{r,v} = \delta_{\zeta_{0,r}}$ when $v$ is finite.
\end{Def}

\begin{rmk} \label{construction using laplacian}
    Alternatively, we could have constructed $\rho_{R,v}$ by taking the measure-valued Laplacian of a suitable continuous function, as in Condition (\ref{adelic measure condition ii}) of Definition \ref{adelic measure def}. However, we prefer our construction, which gives us an explicit description of $\rho_{R,v}$.
\end{rmk}

\begin{rmk} \label{change of base point} \label{check math here}
    If $v$ is finite, then note that for any $a \in D_v (0,R) \subset \C_v$ we have $D_v(0,R) = D_v(a,R)$. For this choice of $a$, we may define the measure $$\upsilon_{a,r} = \int_0^R \delta_{\zeta_{a,r}} d\mu_R (r)$$ using Lemma \ref{construction of measures} and Proposition \ref{areal case is measurable}. We begin by noting that if $a \neq 0$ then $\upsilon_{a,r} \neq \rho_{r,v}$. However, they are related by $$\upsilon_{a,r} = \varphi_*\rho_{r,v},$$ where $\varphi (z) = z + a$. Although there is not necessarily a canonical choice of center $a \in D_v(0,R)$, one important property of $\rho_{r,v}$ is that its potential function $p_{\rho_{r,v}} (z)$ satisfies $$p_{\rho_{r,v}} (z) = p_{\rho_{r,v}} (w)$$ whenever $\abs{z}_v = \abs{w}_v$. This is not the case when $a \neq 0$. For instance, if $v \nmid 2$ then $$p_{\upsilon_{a,r}} (a) = p_{\rho_{r,v}} (2a) = p_{\rho_{r,v}} (a) \neq p_{\rho_{r,v}} (0) = p_{\upsilon_{a,r}} (-a).$$
\end{rmk}

\subsection{Potentials and Energy} We now study the areal measures defined in the previous section. Let $K$ be a number field and $v \in M_K$. Recall that for a Borel probability measure $\mu$ on $\sP_v^1$, we define the potential function $p_\mu : \sP_v^1 \to [-\infty, \infty]$ by $$p_\mu (z) = \int_{\sA_v^1} \log \abs{z-w}_v d\mu (z).$$ Furthermore, we define the \textit{energy} of $\mu$ as $$(\mu , \mu)_v = \iint_{\sA_v^1 \times \sA_v^1 \setminus \Diag_v} -\log \abs{z-w} d\mu (w) d\mu (z).$$   
We have the following proposition.

\begin{prop} \label{potential function}
    Let $K$ be a number field, let $v$ be a place of $K$, and let $R>0$. Then the potential function for the areal measure $\rho = \rho_{R,v}$ is given by
    \begin{gather} \label{rho potential}
        p_\rho (z) = \begin{cases}
            \displaystyle \log R - \frac{1}{2} + \frac{\abs{z}_v^2}{2R^2} & \text{if } \abs{z}_v \leq R, \\
            \log \abs{z}_v & \text{if } \abs{z}_v \geq R
        \end{cases}
    \end{gather}
    and its energy is
    \begin{gather*}
        (\rho , \rho )_v = \frac{1}{4} - \log R.
    \end{gather*}
\end{prop}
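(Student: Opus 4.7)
The plan is to exploit the layered construction $\rho_{R,v}=\int_0^R \lambda_{r,v}\,d\mu_R(r)$ and reduce both computations to one-variable elementary integrals on $[0,R]$, after recognizing that in each case the ``slice'' measure $\lambda_{r,v}$ has a potential that is invariant under rotation and agrees in archimedean and non-archimedean settings.

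The key input is that for every $r > 0$ and every $z$ (in $\C$ if $v\mid\infty$, in $\sP_v^1$ otherwise) one has
\[
    \int \log\abs{z-w}_v \, d\lambda_{r,v}(w) \;=\; \log\max\{\abs{z}_v,r\}.
\]
In the archimedean case this is Jensen's formula for the normalized arc-length measure on the circle of radius $r$. In the non-archimedean case this is the standard evaluation of the Hsia kernel at the Gauss-type point $\zeta_{0,r}$, since $\lambda_{r,v}=\delta_{\zeta_{0,r}}$ and $\log\abs{z-\zeta_{0,r}}_v = \log\max\{\abs{z}_v,r\}$. Applying the integration formula from Lemma \ref{construction of measures} to $f(w)=\log\abs{z-w}_v$ then yields
\[
    p_\rho(z) \;=\; \int_0^R \log\max\{\abs{z}_v,r\}\,\frac{2r}{R^2}\,dr.
\]
(A brief check that the hypotheses of Lemma \ref{construction of measures} apply: measurability follows from Proposition \ref{areal case is measurable} in the non-archimedean case and is immediate in the archimedean case, and the integrand is bounded above by $\max\{\log^+\abs{z}_v,\log R\}$, with any local $\log\abs{z-w}_v$ singularity handled by Fubini for nonnegative parts after a standard splitting.)

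From here the formula for $p_\rho(z)$ is a routine one-variable calculation. If $\abs{z}_v\ge R$ then $\log\max\{\abs{z}_v,r\}=\log\abs{z}_v$ throughout $[0,R]$ and the integral collapses to $\log\abs{z}_v$. If $\abs{z}_v\le R$, I would split at $r=\abs{z}_v$:
\[
    p_\rho(z)=\log\abs{z}_v\cdot\frac{\abs{z}_v^2}{R^2}+\frac{2}{R^2}\int_{\abs{z}_v}^{R}r\log r\,dr,
\]
and then evaluate $\int r\log r\,dr = \frac{r^2}{2}\log r-\frac{r^2}{4}$; the $\log\abs{z}_v$ terms cancel, leaving $\log R-\frac{1}{2}+\frac{\abs{z}_v^2}{2R^2}$, which is (\ref{rho potential}).

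For the energy, I would apply Lemma \ref{construction of measures} once more, this time to the function $z\mapsto p_\rho(z)$ against $\rho$ itself. Because $p_\rho(z)$ depends only on $\abs{z}_v$ and $\lambda_{r,v}$ is supported on points of size exactly $r$, one gets
\[
    \int p_\rho\,d\rho \;=\; \int_0^R \left(\log R-\tfrac{1}{2}+\tfrac{r^2}{2R^2}\right)\frac{2r}{R^2}\,dr \;=\; \log R-\tfrac{1}{2}+\tfrac{1}{4} \;=\; \log R-\tfrac{1}{4},
\]
so $(\rho,\rho)_v = -\int p_\rho\,d\rho = \tfrac{1}{4}-\log R$.

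The main obstacle is not the computation but the justification that the iterated-integral manipulations are legitimate, both for establishing the potential formula at points with $\abs{z}_v\le R$ (where the integrand $\log\abs{z-w}_v$ has a logarithmic singularity) and for identifying $\int \log\abs{z-\zeta_{0,r}}_v\,d\delta_{\zeta_{0,r}}$ with $\log r$ when $\abs{z}_v\le r$ in the Berkovich setting; both steps require care but follow from standard properties of the Hsia kernel and Jensen's formula together with the positive/negative part splitting that is already built into Lemma \ref{construction of measures}.
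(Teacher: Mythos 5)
Your proposal is correct and follows essentially the same line as the paper's proof: compute the slice potential $p_{\lambda_{r,v}}(z)=\log\max\{\abs{z}_v,r\}$ via Jensen's formula in the archimedean case and via the Hsia kernel at $\zeta_{0,r}$ in the non-archimedean case, then push through Lemma \ref{construction of measures} to reduce both the potential and the energy to the same elementary radial integral on $[0,R]$. The only cosmetic difference is that the paper carries out the archimedean energy computation as a direct area integral over $D(0,R)$ rather than applying the radial reduction a second time, but after passing to polar coordinates that is the same calculation; in the non-archimedean case the paper also records explicitly the boundedness and lower-semicontinuity facts (via Baker--Rumely) that justify the integrability hypotheses you flag at the end.
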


\begin{proof}
Our computations depend on whether $v$ is finite or infinite, so we first suppose that $v \mid \infty$ and we identify $\sP_v^1$ with $\P^1 (\C)$. Using Jensen's formula, one may demonstrate that 
\begin{gather} \label{potential of lambda v infinite}
    p_{\lambda_{r,v}}(z) = \frac{1}{2\pi} \int_0^{2\pi} \log \abs{z-re^{i\theta}}_v d\theta = \log \max \{ r, \abs{z}_v \}
\end{gather}
for any $r > 0$. So 
\begin{align}
\begin{split} \label{complex case rho potential}
    p_\rho (z) = & \frac{1}{\pi R^2} \int_0^R \int_0^{2\pi} \log \abs{z-re^{i\theta}} d\theta \, rdr \\
    = & \frac{2}{R^2} \int_0^R \log \max \{ r, \abs{z}_v \} \, r dr.
\end{split}
\end{align}
By considering cases depending on whether or not $\abs{z}_v \leq R$, one shows that this evaluates to (\ref{rho potential}). Next we compute
\begin{align*}
    (\rho , \rho)_v = & \int_{D(0,R)} -p_\rho (z) \, d\rho (z) \\
    = & \frac{1}{\pi R^2} \int_{D(0,R)} \left ( -\log R + \frac{1}{2} - \frac{\abs{z}^2}{2R^2} \right ) dA(z) \\
    = & \frac{1}{4} - \log R.
\end{align*}
Thus the result holds when $v \mid \infty$. Now suppose $v$ is finite. We begin by computing that for any $r>0$,
\begin{align} \label{potential of lambda v finite}
    \begin{split}
        p_{\lambda_{r,v}} (z) = & \int_{\sA_v^1} \log \abs{z-w}_v d\lambda_{r,v} (w) \\
        = & \log \abs{z-\zeta_{0,r}}_v \\
        = & \log \max \{ r, \abs{z}_v \}.
    \end{split}
\end{align}
By Proposition 6.11 in \cite{baker-rumely.book}, $p_\rho (z)$ is defined for any $z \in \sP_v^1$. Thus we may apply Lemma \ref{construction of measures} to compute $p_\rho (z)$. We have
\begin{align*}
    p_\rho (z) = & \int \log \abs{z-w}_v d\rho (w) \\
    = & \frac{2}{R^2} \int_0^R \left ( \int \log \abs{z-w}_v d\lambda_{r,v} (w) \right ) rdr \\
    = & \frac{2}{R^2} \int_0^R \log \max \{r, \abs{z}_v \} \, rdr.
\end{align*}
Similarly to (\ref{complex case rho potential}), this evaluates to  (\ref{rho potential}).

Next, note that $\int -p_\rho (z) \, d\lambda_{r,v} (z) = -p_\rho (\zeta_{0,r})$ is defined for any $r>0$. Furthermore, one may verify that $-p_\rho (z) \leq -p_\rho (0)$ for any $z \in \sP_v^1$, which implies that $$\int -p_\rho (z) d\rho (z) \leq -p_\rho (0) < \infty.$$ Combining this with the fact that $-p_\rho (z)$ is lower semicontinuous on $\sA_v^1$ (Proposition 6.11 in \cite{baker-rumely.book}), we know $\int -p_\rho (z) \, d\rho (z)$ exists. So we may apply Lemma \ref{construction of measures} to compute
\begin{align*}
    (\rho ,\rho)_v = & \int_{\mathcal{D}_v (0,R)} - p_\rho (z) \, d\rho (z) \\
    = & \frac{2}{R^2} \int_0^R  \int \left ( - \log R + \frac{1}{2} - \frac{\abs{z}_v^2}{2R^2} \right ) d\lambda_{r,v} (z) \, rdr \\
    = & \frac{2}{R^2} \int_0^R \left (-\log R + \frac{1}{2} - \frac{r^2}{2R^2} \right ) rdr\\
    = & \frac{1}{4} - \log R. \qedhere
\end{align*}
\end{proof}

Recall from Section \ref{introduction section} that we define the function $f_R : [0,\infty) \to \R$ by 
\begin{gather} \label{f_r function}
    f_R (x) = \begin{cases}
        \displaystyle \log R - \frac{1}{2} + \frac{x^2}{2R^2} & \text{if } x \leq R, \\
        \log x & \text{if } x \geq R.
    \end{cases}
\end{gather}
Then by Proposition \ref{potential function} we have that $p_{\rho_{R,v}} (z) = f_R (\abs{z}_v)$. Since both $f_R$ and $\abs{z}_v$ are continuous, we have the following.

\begin{cor} \label{potentials are continuous}
    Let $K$ be a number field, let $v$ be a place of $K$, and let $R>0$. Then $p_{\rho_{R,v}}$ is continuous.
\end{cor}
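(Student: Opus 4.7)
The plan is to invoke the explicit formula $p_{\rho_{R,v}}(z) = f_R(\abs{z}_v)$ from Proposition \ref{potential function} and exhibit $p_{\rho_{R,v}}$ as a composition of two continuous maps. So the argument factors through two routine continuity checks, one for $f_R$ and one for the absolute value map.

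First I would verify that $f_R : [0,\infty) \to \R$ is continuous. The two branches of the piecewise definition in (\ref{f_r function}) are elementary continuous functions on their respective domains, so it suffices to check that they agree at the junction $x = R$. Substituting $x = R$ into the upper branch gives $\log R - \tfrac{1}{2} + \tfrac{R^2}{2R^2} = \log R$, which matches the lower branch. Hence $f_R$ is continuous on $[0,\infty)$. It extends continuously to $[0,\infty]$ by setting $f_R(\infty) = +\infty$, since $\log x \to \infty$ as $x \to \infty$.

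Next I would invoke the fact that the map $z \mapsto \abs{z}_v$ is continuous from $\sP_v^1$ to $[0,\infty]$. For $v \mid \infty$ this is obvious under the identification $\sP_v^1 = \P^1(\C)$. For finite $v$, this is a standard feature of the Berkovich projective line: the absolute value extends continuously to $\sP_v^1$ via the Hsia kernel (with second variable $0$), see \cite[Chapter 4]{baker-rumely.book}. Composing, $p_{\rho_{R,v}} = f_R \circ \abs{\cdot}_v$ is continuous on $\sP_v^1$, taking finite real values on $\sA_v^1$ and the value $+\infty$ only at the point at infinity.

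I do not anticipate any substantive obstacle: the corollary is essentially a direct consequence of Proposition \ref{potential function} together with the two continuity statements above. The only minor subtlety is the behavior at $\infty$, but this is handled uniformly by working with $f_R$ viewed as a map to $[0,\infty]$; alternatively, one can simply restrict attention to $\sA_v^1$, where $p_{\rho_{R,v}}$ is a real-valued continuous function, which is the form in which the continuity will be used later (e.g.\ to verify condition (\ref{adelic measure condition ii}) of Definition \ref{adelic measure def} for the areal adelic measure).
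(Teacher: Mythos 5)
Your proof is correct and follows exactly the paper's approach: the paper also observes that $p_{\rho_{R,v}}(z) = f_R(\abs{z}_v)$ by Proposition \ref{potential function} and concludes continuity from the continuity of $f_R$ and of $\abs{\cdot}_v$. You have simply supplied the routine details (matching the two branches of $f_R$ at $x=R$, the continuity of the Hsia kernel on $\sP_v^1$, and the treatment of the point at infinity) that the paper leaves implicit.
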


\begin{figure} \label{graphs of f_R}
\centering
\begin{subfigure}{0.24\textwidth}
    \centering
    \includegraphics[width=.95\linewidth]{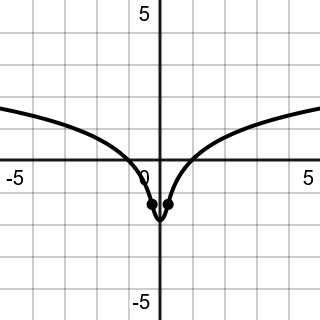}  
    \caption{$y=f_{\nicefrac{1}{4}}(\abs{x})$}
    \label{graph with radius 1/4}
\end{subfigure}
\begin{subfigure}{0.24\textwidth}
    \centering
    \includegraphics[width=.95\linewidth]{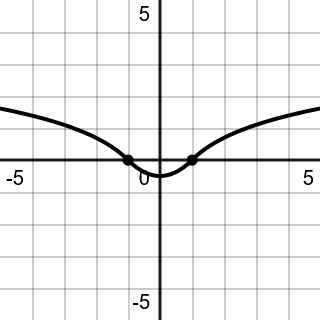} 
    \caption{$y=f_1 (\abs{x})$}
    \label{graph with radius 1}
\end{subfigure}
\begin{subfigure}{0.24\textwidth}
    \centering
    \includegraphics[width=.95\linewidth]{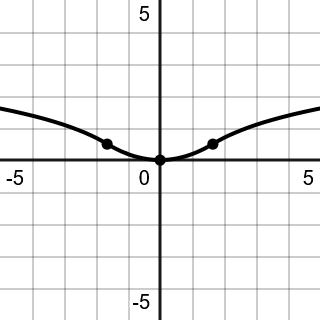}  
    \caption{$y=f_{e^{\nicefrac{1}{2}}} (\abs{x})$}
    \label{graph with radius e^.5}
\end{subfigure}
\begin{subfigure}{0.24\textwidth}
    \centering
    \includegraphics[width=.95\linewidth]{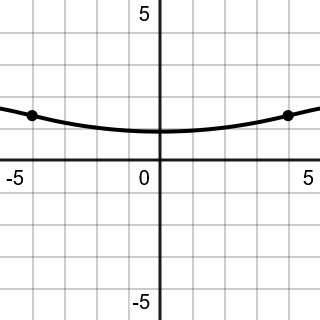} 
    \caption{$y=f_4 (\abs{x})$}
    \label{graph with radius 4}
\end{subfigure}
\caption{Graphs of $y=f_R (\abs{x})$ for various $R$. The points $(\pm R, f_R (\abs{R}))$ are indicated in each graph. In Figure \ref{graph with radius e^.5} the graph's intersection with the origin is indicated.}
\end{figure}

As another corollary, we have that $p_{\rho_{R,v}}$ is radial.

\begin{cor}
    Suppose that $z, w \in \sP_v^1$ such that $\abs{z}_v = \abs{w}_v$. Then we have $p_{\rho_{R,v}} (z) = p_{\rho_{R,v}} (w)$. 
\end{cor}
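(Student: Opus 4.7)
The proof will be a one-step appeal to Proposition \ref{potential function}, which was established just above. That proposition provides the explicit formula
\begin{gather*}
    p_{\rho_{R,v}}(z) = \begin{cases}
        \log R - \dfrac{1}{2} + \dfrac{\abs{z}_v^2}{2R^2} & \text{if } \abs{z}_v \leq R, \\
        \log \abs{z}_v & \text{if } \abs{z}_v \geq R,
    \end{cases}
\end{gather*}
equivalently, $p_{\rho_{R,v}}(z) = f_R(\abs{z}_v)$ in the notation of (\ref{f_r function}). In other words, the formula for $p_{\rho_{R,v}}$ factors through the real-valued function $\abs{\cdot}_v$, so the potential depends on $z$ only via the real number $\abs{z}_v$.

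Given this, my plan is simply to observe that if $z, w \in \sP_v^1$ satisfy $\abs{z}_v = \abs{w}_v$, then
\begin{gather*}
    p_{\rho_{R,v}}(z) = f_R(\abs{z}_v) = f_R(\abs{w}_v) = p_{\rho_{R,v}}(w),
\end{gather*}
which is exactly the claim. There is no real obstacle here; the only thing to note is that $\abs{\cdot}_v$ on $\sP_v^1$ is understood via the Hsia kernel extension, which is precisely the framework Proposition \ref{potential function} uses (for instance, $\abs{\zeta_{0,r}}_v = r$), so the formula $p_{\rho_{R,v}}(z) = f_R(\abs{z}_v)$ applies uniformly to all $z \in \sP_v^1$. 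Thus the corollary is an immediate consequence of the radial form of the potential derived in Proposition \ref{potential function}.
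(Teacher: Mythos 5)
Your proof is correct and takes exactly the route the paper intends: the corollary is stated with no proof because it follows immediately from Proposition \ref{potential function}, which shows $p_{\rho_{R,v}}(z) = f_R(\abs{z}_v)$ depends on $z$ only through $\abs{z}_v$. Nothing further is needed.
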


\subsection{The Areal Weil Height} 

Finally we define areal adelic measures and areal Weil heights.

\begin{Def} \label{definition of areal adelic measure}
    Let $S \subset M_K$ be a finite set of places of a number field $K$ and let $\br = (r_v)_{v \in S} \in (0,\infty)^S$ be a tuple of positive numbers indexed by $S$. Then the areal adelic measure for $\br$ is the sequence $\rho_\br = (\rho_{\br , v})_{v \in M_K}$, where $$\rho_{\br , v} = \begin{cases}
        \rho_{r_v, v} & \text{if } v \in S \\
        \lambda_v & \text{if } v \notin S.
    \end{cases}$$
    An areal Weil height is an adelic height of the form $h_{\rho_\mathbf{r}}$, where $\rho_\mathbf{r}$ is an areal adelic measure.
\end{Def}

We now recall the setup of Theorem \ref{areal height formula}. For every $v \in M_K$, we let $$d_v = \frac{[K_v : \Q_v]}{[K : \Q]}$$ be the local-to-global degree at $v$. Keep the notation from Definition \ref{definition of areal adelic measure} and suppose that $\alpha \in \overline{K}$. Let $L/K$ be a finite extension containing $\alpha$. For each $w \in M_L$ let $$n_w = \frac{[L_w : \Q_w]}{[L:\Q]}$$ be the local-to-global degree of $L/\Q$ at $w \in M_L$. Furthermore let $M_L^S$ be the set of places $w \in M_L$ which lie above some $v \in S$. At every $w \in M_L^S$, let $r_w = r_v$, where $v \in M_K$ and $w \mid v$. We will prove that 
    \begin{gather} \label{formula of areal height}
        h_{\rho_\mathbf{r}} (\alpha) = h_{\rho_\mathbf{r}} (\infty) + \sum_{w \in M_L \setminus M_L^S} n_w \log^+ \abs{\alpha}_w + \sum_{w \in M_L^S} n_w f_{r_w} (\abs{\alpha}_w),
    \end{gather}
where 
    \begin{gather*} 
        h_{\rho_\mathbf{r}} (\infty) = \sum_{v \in S} d_v \left ( \frac{1}{8} - \frac{1}{2} \log r_v \right ).
    \end{gather*}
Furthermore, we will use this notation throughout the rest of the paper without further comment.

\begin{proof}[Proof of Theorem \ref{areal height formula}]
    The fact that $\rho_\br$ is an adelic measure follows from the finiteness of $S$, the fact that $\rho_{r_v, v}$ is a Borel probability on $\sP_v^1$ for any $v \in S$, and from Corollary \ref{potentials are continuous}. 
    
    Let $\alpha_1, \cdots, \alpha_n$ be the distinct conjugates of $\alpha$ over $K$ and let $(\sigma_v)_{v \in M_K}$ be a sequence of embeddings $\sigma_v : \overline{K} \to \C_v$ which restrict to isometries on $(K,\abs{\cdot}_v)$. Define $[\alpha] = ([\alpha]_v)_{v \in M_K}$, where $$[\alpha]_v = \frac{1}{n} \sum_{k=1}^n \delta_{\sigma_v (\alpha_k)}.$$ Using Lemme 5.3 of \cite{frl.2} we may write
    \begin{gather} \label{FRL height formula}
        h_{\rho_\mathbf{r}} (\alpha) = h_{\rho_\mathbf{r}} (\infty) - \sum_{v \in M_K} d_v \, (\rho_\mathbf{r} , [\alpha])_v,
    \end{gather}
    where 
    \begin{gather*}
        \begin{split}
            h_{\rho_\mathbf{r}} (\infty) = \frac{1}{2} \sum_{v \in M_K} d_v \, (\rho_\mathbf{r} , \rho_\mathbf{r} )_v.
        \end{split}
    \end{gather*}
    Proposition \ref{potential function} then implies
   \begin{gather*}
       h_{\rho_\mathbf{r}} (\infty) = \sum_{v \in S} d_v \left ( \frac{1}{8} - \frac{1}{2} \log r_v \right ).
   \end{gather*}
   On the other hand, for $v \in S$ we use Proposition \ref{potential function} to compute
    \begin{gather} \label{local height}
    \begin{split}
        -(\rho_\mathbf{r} , [\alpha])_v = & \iint_{\sA_v^1 \times \sA_v^1 \setminus \Diag_v} \log \abs{z-w}_v d\rho_{\br,v} (z) d[\alpha]_v (w) \\
        = & \frac{1}{n} \sum_{k=1}^n \int_{\sA_v^1} \log \abs{z-\sigma_v \alpha_k} d\rho_{\br,v} (z) \\
        = & \frac{1}{n} \sum_{k=1}^n p_{\rho_{\br,v}} ( \sigma_v \alpha_k) \\
        = & \frac{1}{n} \sum_{k=1}^n f_{r_v} (\abs{\sigma_v \alpha_k}_v) \\
        = & \sum_{\substack{w \in M_L\\w\mid v}} \frac{n_w}{d_v} f_{r_v} (\abs{\alpha}_w).
    \end{split}
    \end{gather}
    For $v \in M_K \setminus S$, we have 
    \begin{gather} \label{local height part 2}
        -(\rho_\mathbf{r}, [\alpha])_v = \frac{1}{n} \sum_{k=1}^n p_{\lambda_v} (\sigma_v \alpha_k) = \frac{1}{n} \sum_{k=1}^n \log^+ \abs{\sigma_v \alpha_k}_v =\sum_{\substack{w \in M_L\\w\mid v}} \frac{n_w}{d_v} \log^+ \abs{\alpha}_w.
    \end{gather}
    Plugging (\ref{local height}) and (\ref{local height part 2}) into (\ref{FRL height formula}), we obtain the desired expression for $h_{\rho_\mathbf{r}} (\alpha)$. 
\end{proof}

\begin{rmk} \label{another expression for areal weil heights}
    In some contexts, it is more convenient to write areal Weil heights using embeddings of conjugates rather than places of an extension of $K$. From the proof of Theorem \ref{areal height formula}, we see that we may write 
    \begin{gather} \label{another equation for areal weil heights}
        h_{\rho_\mathbf{r}} (\alpha) = h_{\rho_\mathbf{r}} (\infty) + \sum_{v \in M_K \setminus S} \frac{d_v}{n} \sum_{k=1}^n \log^+ \abs{\sigma_v \alpha_k}_v + \sum_{v \in S} \frac{d_v}{n} \sum_{k=1}^n f_{r_v} (\abs{\sigma_v \alpha_k}_v ).
    \end{gather}
    Furthermore, we have
    \begin{multline} \label{another equation for areal weil heights S sum}
        \sum_{v \in S} \frac{d_v}{n} \sum_{k=1}^n f_{r_v} (\abs{\sigma_v \alpha_k}_v ) = \\\sum_{v \in S} \frac{d_v}{n} \left (  \sum_{\abs{\sigma_v \alpha_k}_v < r_v} \log r_v - \frac{1}{2} + \frac{\abs{\sigma_v \alpha_k}_v^2}{2r_v^2} + \sum_{\abs{\sigma_v \alpha_k}_v \geq r_v} \log \abs{\sigma_v \alpha_k}_v \right ).
    \end{multline}
    Since the above expressions do not depend on the choice of embeddings $(\sigma_v)_{v \in M_K}$, it is also reasonable to abuse notation and remove all instances of ``$\sigma_v$" from (\ref{another equation for areal weil heights}) and (\ref{another equation for areal weil heights S sum}). We will follow this convention in the proof of Theorem \ref{equidistribution for gamma (r) > 1}.
\end{rmk}

For the following examples we keep the same notation from Theorem \ref{areal height formula}.

\begin{example} \label{height of 0}
The areal Weil height at zero is given by
    \begin{align*}
        h_{\rho_\mathbf{r}} (0) = h_{\rho_\mathbf{r}} (\infty) + \sum_{v \in S} d_v \left ( \log r_v - \frac{1}{2} \right ) = \sum_{v \in S} d_v \left ( \frac{1}{2} \log r_v -\frac{3}{8} \right ).
    \end{align*}
Note that $h_{\rho_\mathbf{r}} (0) < 0$ when $r_v < e^{\nicefrac{3}{4}}$ for every $v \in S$. 
\end{example}

\begin{example} \label{height of zeta}
    Suppose that $\zeta$ is a root of unity. Then
    \begin{align} \label{pairing of rho and lambda}
    \begin{split}
        h_{\rho_\mathbf{r}} (\zeta) =  & h_{\rho_\mathbf{r}} (\infty ) + \sum_{\substack{v \in S \\ r_v > 1}} d_v \left ( \log r_v - \frac{1}{2} + \frac{1}{2r_v^2} \right ) \\
        = & \sum_{\substack{v \in S\\r_v \leq 1}} d_v \left ( \frac{1}{8} - \frac{1}{2} \log r_v \right ) + \sum_{\substack{v \in S \\ r_v > 1}} d_v \left ( \frac{1}{2} \log r_v - \frac{3}{8} + \frac{1}{2r_v^2} \right ). 
    \end{split}
    \end{align}
    From the above expression we see that $h_{\rho_\mathbf{r}} (\zeta) > 0$. Also, note that it is a consequence of Theorem \ref{pst theorem} that (\ref{pairing of rho and lambda}) is equal to $\AZ{\rho_\mathbf{r}}{\lambda}$. 
\end{example}

\section{Inequalities} \label{inequalities section}

\subsection{Basic Inequalities}

For $r>0$, recall that we define $f_r : [0,\infty) \to \R$ by (\ref{f_r function}). Our results will make use of several properties of $f_r$, some of which we record here for convenience.

\begin{lemma} \label{f_r properties}
    The function $f_r (x)$ is once differentiable and strictly increasing in $x$. Furthermore, the following inequalities hold for $f_r$.
    \begin{enumerate}[(a)]
        \item $f_r(x) \geq \log x$, with equality if and only if $x \geq r$. Furthermore, $f_r (x) - \log x$ is strictly decreasing in $(0,r_v)$.  \label{f_r properties a}
        \item $f_r(x) \leq \log^+ x + f_r (1)$. If $r \leq 1$, then equality holds if and only if $x \geq 1$. If $r > 1$, then equality holds if and only if $x=1$. \label{f_r properties b}
        \item $f_r (x) \geq \log^+ x + \min \{ 0, f_r (0) \}$. If $r < e^{\nicefrac{1}{2}}$, then equality holds if and only if $x=0$. If $r=e^{\nicefrac{1}{2}}$, then equality holds if and only if $x=0$ or $x \geq r$. If $r>e^{\nicefrac{1}{2}}$ then equality holds if and only if $x \geq r$. \label{f_r properties c}
    \end{enumerate}
\end{lemma}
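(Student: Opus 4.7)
My plan is to treat the opening claim and each of (a), (b), (c) by elementary calculus, working piecewise on the intervals where the two branches of $f_r$ apply. First I would establish that $f_r$ is $C^1$ at the break point $x = r$: both formulas evaluate to $\log r$ there, and both one-sided derivatives equal $1/r$. Strict monotonicity is then immediate, since on $[0,r]$ the identity $f_r(x_2) - f_r(x_1) = (x_2^2 - x_1^2)/(2r^2)$ is positive whenever $0 \le x_1 < x_2 \le r$, and on $[r,\infty)$ the branch $\log x$ is strictly increasing.

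For (a), I would introduce the auxiliary function $g(x) = f_r(x) - \log x$ on $(0,r]$, compute $g'(x) = (x^2 - r^2)/(xr^2) < 0$ on $(0,r)$, and observe that $g(r) = 0$. This yields strict positivity of $g$ on $(0,r)$, its strict decrease on that interval, and equality on $[r,\infty)$ by definition of $f_r$.

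For (b) I would split on the sign of $r - 1$. If $r \le 1$ then $f_r(1) = 0$ and the claim reduces to $f_r(x) \le \log^+\abs{x}$, which follows from part (a) for $x \ge r$ and from the bound $f_r(x) < \log r \le 0$ for $x < r$. If $r > 1$, I would study $G(x) := \log^+ x + f_r(1) - f_r(x)$ on the three intervals $[0,1]$, $[1,r]$, and $[r,\infty)$: on $[0,1]$ monotonicity of $f_r$ gives $G \ge 0$ with equality only at $x=1$; on $[1,r]$ one has $G'(x) = 1/x - x/r^2 > 0$; on $[r,\infty)$, $G$ is the constant $f_r(1)$, which is positive because $f_r(1) = 0$ at $r = 1$ and has positive derivative $(r^2 - 1)/r^3$ in $r$.

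For (c), since $f_r(0) = \log r - 1/2$, the sign of $f_r(0)$ toggles at $r = e^{1/2}$, which motivates the three subcases in the statement. In each case the bound $f_r(x) \ge \log^+ x + \min\{0, f_r(0)\}$ follows from part (a) on $[\max(1,r), \infty)$ (and on $[1,r]$ when $r > 1$), combined with direct comparison against $f_r(0)$ on $[0, \min(1,r)]$, where $f_r$ is strictly increasing with initial value $f_r(0)$. The equality statements just amount to tracking which of these subintervals contributes a tight bound. The argument is entirely routine; the only real work is the bookkeeping of the equality cases in (b) and (c) across the subintervals determined by the relative positions of $0$, $1$, $r$, and, for (c), $e^{1/2}$. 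Part (a) is the key input that streamlines everything that follows.
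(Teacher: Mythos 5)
The paper omits this proof entirely, saying only that it ``requires elementary calculus,'' so there is no argument in the paper to compare against. Your proposal supplies exactly the kind of piecewise calculus the authors must have in mind and it is correct: the $C^1$-gluing at $x=r$, the auxiliary function $g(x) = f_r(x) - \log x$ on $(0,r]$ for part (a), the sign-of-$(r-1)$ split and the derivative computation $G'(x) = \frac{1}{x} - \frac{x}{r^2}$ on $[1,r]$ for part (b), and the sign-of-$f_r(0)$ split for part (c), all check out. One small point worth making explicit in the $r < e^{1/2}$, $r>1$ subcase of (c): on the interval $[1,r]$ you need $\frac{x^2}{2r^2} - \log x > 0$, which is not immediate from part (a) alone; but since that expression equals $\frac{1}{2r^2}>0$ at $x=1$, has nonpositive derivative $\frac{x^2-r^2}{xr^2}$ on $[1,r]$, and equals $\frac12 - \log r > 0$ at $x=r$, it stays positive. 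Otherwise your bookkeeping of the equality cases is sound, and this is a complete proof of the lemma the paper leaves to the reader.
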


We omit the proof of Lemma \ref{f_r properties}, which only requires elementary calculus. We now prove our comparison of $h_{\rho_\mathbf{r}}$ with the classical Weil height.

\begin{proof}[Proof of Proposition \ref{areal heights vs weil height}]
    We first note that when $\alpha = \infty$, (\ref{areal heights vs weil height inequality}) becomes $$h_{\rho_\mathbf{r}} (\infty) + \sum_{v \in S} d_v \min \{0,f_{r_v}(0)\} \leq h_{\rho_\mathbf{r}} (\infty) \leq  h_{\rho_\mathbf{r}} (\infty) + \sum_{v \in S} d_v f_{r_v} (1).$$ So it is immediate that the lower bound holds, and the upper bound follows from the fact that $f_{r_v} (1) \geq 0$ by Lemma \ref{f_r properties} (\ref{f_r properties a}).

    Now suppose that $\alpha \in \overline{K}$ and pick a finite extension $L / K$ such that $\alpha \in L$. We begin by considering the upper bound. Using Lemma \ref{f_r properties} (\ref{f_r properties b}), we have
    \begin{align*}
        \sum_{w \in M_L^S} n_w f_{r_w} (\abs{\alpha}_w) \leq & \sum_{w \in M_L^S} n_w (\log^+ \abs{\alpha}_w + f_{r_w} (1)) \\
        = & \sum_{w \in M_L^S} n_w \log^+ \abs{\alpha}_w + \sum_{v \in S} d_v f_{r_v} (1).
    \end{align*}
    By combining the above inequality with (\ref{formula of areal height}), we see that the upper bound of (\ref{areal heights vs weil height inequality}) holds. If $\alpha$ is a root of unity, then $h(\alpha) = 0$ and equality in the upper bound holds.

    Now let's consider the lower bound. By Lemma \ref{f_r properties} (\ref{f_r properties c}) we have 
    \begin{align*}
        \sum_{w \in M_L^S} n_w f_{r_w} (\abs{\alpha}_w) \geq & \sum_{w \in M_L^S} n_w (\log^+ \abs{\alpha}_w + \min \{ 0, f_{r_w}(0) \} ) \\
        = & \sum_{w \in M_L^S} n_w \log^+ \abs{\alpha}_w + \sum_{v \in S} d_v \min \{ 0, f_{r_v} (0) \}.
    \end{align*}
    By combining the above inequality with (\ref{formula of areal height}), we obtain the lower bound of (\ref{areal heights vs weil height inequality}).
    To show that this bound cannot be improved, pick $0 < \varepsilon < \min_{v \in S} r_v$. Find $\alpha \in K$ such that 
    \begin{enumerate}[(i)]
        \item If $v \in S$ such that $r_v < e^{\nicefrac{1}{2}}$, then $\abs{\alpha}_v < \varepsilon$.
        \item If $v \in S$ such that $r_v \geq e^{\nicefrac{1}{2}}$, then $\abs{\alpha}_v \geq r_v$.
    \end{enumerate}
    There are no conditions at the remaining places $v \notin S$. Such an $\alpha$ can be found using a simultaneous approximation theorem, see for instance \cite[Theorem 3.1]{cassels}. With such a choice of $\alpha$, we have
    \begin{align*}
        0 \leq & h_{\rho_\mathbf{r}} (\alpha) - h(\alpha) - h_{\rho_\mathbf{r}} (\infty) - \sum_{v \in S} d_v \min \{ 0, f_{r_v} (0) \} \\= & \sum_{\substack{v \in S\\r_v < e^{\nicefrac{1}{2}}}} d_v (f_{r_v} (\abs{\alpha}_v) - f_{r_v} (0)) \\
        < & \sum_{\substack{v \in S\\r_v < e^{\nicefrac{1}{2}}}} d_v (f_{r_v} (\varepsilon) - f_{r_v} (0)).
    \end{align*}
    Since $f_{r_v}$ is continuous for every $v \in S$ and since $S$ is finite, as $\varepsilon \to 0$, $$h_{\rho_\mathbf{r}} (\alpha) - h (\alpha) - h_{\rho_\mathbf{r}} (\infty) - \sum_{v \in S} d_v \min \{0 , f_{r_v} (0) \} \to 0.$$ Thus the lower bound of (\ref{areal heights vs weil height inequality}) cannot be improved.
\end{proof}

\subsection{Lambda Heights}

We now introduce another class of adelic heights which will play a key role in determining lower bounds and equidistribution statements for areal Weil heights.

\begin{Def} \label{lambda heights def}
    Let $K$ be a number field, let $S \subset M_K$ be a finite set of places, and let $\mathbf{t} = (r_v)_{v \in S} \in (0,\infty)^S$ be a tuple of radii indexed by $S$. We define the adelic measure $\lambda_\mathbf{t} = (\lambda_{\mathbf{t}, v})_{v \in M_K}$ by
    \begin{gather*}
        d\lambda_{\mathbf{t},v} = \begin{cases}
            \displaystyle \left. \frac{d\theta}{2\pi r_v} \right |_{\partial D_v (0,r_v)} & \text{if } v \in S \text{ and } v\mid \infty, \\
            d\delta_{\zeta_{0,r_v}} & \text{ if } v \in S \text{ and } v\nmid \infty, \\
            \lambda_v & \text{if } v \notin S.
        \end{cases}
    \end{gather*}
\end{Def}
The adelic measure $\lambda_\mathbf{t}$ should be thought of as the adelic measure associated to $\E_\mathbf{t}$ from (\ref{E_r}). We now prove various properties for the heights $h_{\lambda_\mathbf{t}}$, including an explicit expression for them.

\begin{prop} \label{expression for lambda heights}
    Let $\alpha \in L$, where $L/K$ is a finite extension. For every $w \in M_L$, let $n_w$ be the local-to-global degree of $L/K$ at $w$. For every $w \in M_L^S$, let $t_w = t_v$, where $w \mid v$ and $v \in S$. Then 
    \begin{gather} \label{lambda height expression}
        h_{\lambda_\mathbf{t}} (\alpha) = h_{\lambda_\mathbf{t}} (\infty) + \sum_{w \in M_L} n_w \log^+ \abs{\alpha}_w + \sum_{w \in M_L^S} n_w \log \max \{ t_w, \abs{\alpha}_w \},
    \end{gather}
    where 
    \begin{gather*} 
        h_{\lambda_\mathbf{t}} (\infty) = - \frac{1}{2} \sum_{v \in S} \log t_v^{d_v}.
    \end{gather*}
\end{prop}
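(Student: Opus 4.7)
The plan is to mirror the proof of Theorem \ref{areal height formula}. I would start by applying Favre and Rivera-Letelier's Lemme 5.3 (formula (\ref{FRL height formula})) to $\lambda_\mathbf{t}$, which gives
\[
  h_{\lambda_\mathbf{t}}(\alpha) = h_{\lambda_\mathbf{t}}(\infty) - \sum_{v \in M_K} d_v\,(\lambda_\mathbf{t},[\alpha])_v, \qquad h_{\lambda_\mathbf{t}}(\infty) = \tfrac{1}{2}\sum_{v \in M_K} d_v\,(\lambda_{\mathbf{t},v},\lambda_{\mathbf{t},v})_v.
\]
The proof then reduces to two tasks: computing the local potentials and local energies of $\lambda_{\mathbf{t},v}$ at every $v \in M_K$, and then repackaging the resulting sums over $K$-embeddings as sums over places of $L$.

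Next I would pin down $p_{\lambda_{\mathbf{t},v}}$ at each $v \in M_K$. At an archimedean $v \in S$, Jensen's formula yields $p_{\lambda_{\mathbf{t},v}}(z) = \log\max\{t_v,|z|_v\}$, exactly as in equation (\ref{potential of lambda v infinite}) applied with $r = t_v$. At a non-archimedean $v \in S$ we have $\lambda_{\mathbf{t},v} = \delta_{\zeta_{0,t_v}}$, so $p_{\lambda_{\mathbf{t},v}}(z) = \log|z-\zeta_{0,t_v}|_v = \log\max\{t_v,|z|_v\}$ via the Hsia kernel identity used in (\ref{potential of lambda v finite}). At $v \notin S$, the standard formula gives $p_{\lambda_v}(z) = \log^+|z|_v$. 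Because the difference $p_{\lambda_{\mathbf{t},v}} - p_{\lambda_v}$ is manifestly continuous on $\sP_v^1$ at each of the finitely many $v \in S$, this simultaneously verifies that $\lambda_\mathbf{t}$ is a bona fide adelic measure in the sense of Definition \ref{adelic measure def}.

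With the potentials in hand the local pairings are immediate: $-(\lambda_\mathbf{t},[\alpha])_v = \frac{1}{n}\sum_{k=1}^n p_{\lambda_{\mathbf{t},v}}(\sigma_v\alpha_k)$, which on grouping conjugates by the place of $L$ they induce becomes $\sum_{w \mid v}(n_w/d_v)\log\max\{t_v,|\alpha|_w\}$ for $v \in S$ and $\sum_{w \mid v}(n_w/d_v)\log^+|\alpha|_w$ for $v \notin S$. For the constant, $\lambda_{\mathbf{t},v}$ is supported on $\{|z|_v = t_v\}$ --- a circle in the archimedean case, the single Berkovich point $\zeta_{0,t_v}$ in the non-archimedean case --- so $p_{\lambda_{\mathbf{t},v}} \equiv \log t_v$ on $\supp\lambda_{\mathbf{t},v}$, and hence $(\lambda_{\mathbf{t},v},\lambda_{\mathbf{t},v})_v = -\log t_v$ for $v \in S$ (and $0$ otherwise). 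Summing over $v$ produces the asserted $h_{\lambda_\mathbf{t}}(\infty) = -\tfrac{1}{2}\sum_{v \in S}\log t_v^{d_v}$, and plugging everything back into (\ref{FRL height formula}) yields the claimed height formula.

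The computation is essentially mechanical once the potentials are in hand; the only genuine subtlety is the Berkovich-side identity $p_{\delta_{\zeta_{0,t_v}}}(z) = \log\max\{t_v,|z|_v\}$, which was already invoked in the proof of Proposition \ref{potential function}. In short, this is the same bookkeeping as in Theorem \ref{areal height formula}, with the role of $f_{r_v}(|\alpha|_v)$ played by $\log\max\{t_v,|\alpha|_v\}$.
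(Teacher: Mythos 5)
Your proof is correct and follows essentially the same strategy as the paper's own proof: apply the Favre--Rivera-Letelier formula, compute $p_{\lambda_{\mathbf{t},v}}(z) = \log\max\{t_v,|z|_v\}$ via (\ref{potential of lambda v infinite}) and (\ref{potential of lambda v finite}), read off the local pairings and the energy $(\lambda_{\mathbf{t},v},\lambda_{\mathbf{t},v})_v = -\log t_v$, and reorganize the sum over places of $L$ exactly as in (\ref{local height}). One small point worth flagging: your derivation (and the paper's) actually yields a first sum over $M_L \setminus M_L^S$, not $M_L$ as printed in (\ref{lambda height expression}); this appears to be a typo in the statement of the proposition rather than a gap in your argument.
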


\begin{proof}
    The proof follows the same stratey as Theorem \ref{areal height formula}. Note that we have computed that $$p_{\lambda_{\mathbf{t},v}} (z) = \log \max \{ t_v , \abs{z}_v \}$$ in (\ref{potential of lambda v infinite}) and (\ref{potential of lambda v finite}). Thus $$(\lambda_{\mathbf{t}, v}, \lambda_{\mathbf{t}, v}) = \int -\log \max \{ t_v , \abs{z}_v \} \, d\lambda_{\mathbf{t},v} (z) = -\log t_v,$$ which yields $$h_{\lambda_\mathbf{t}} (\infty) = - \frac{1}{2} \sum_{v \in S} \log t_v^{d_v}.$$ Using a computation similar to (\ref{local height}), where $f_{r_v} (\abs{\cdot}_v)$ is replaced with $\log \max \{ t_v , \abs{\cdot}_v \}$, we obtain (\ref{lambda height expression}).
\end{proof}

We now restrict ourselves to studying $h_{\lambda_\mathbf{t}}$ in the special case where $h_{\lambda_\mathbf{t}} (\infty ) = 0$. This is equivalent to the condition that the adelic capacity $\gamma_\infty (\E_\mathbf{t}) = \gamma (\mathbf{t}) = 1$, where $\E_\mathbf{t}$ is defined by (\ref{E_r}). We prove an analog of Kronecker's theorem in this setting. 

\begin{prop} \label{kronecker for lambda heights}
    Let $K$ be a number field, let $S \subset M_K$ be a finite set of places of $K$, and let $\mathbf{t} = (t_v)_{v \in S} \in (0,\infty)^S$ such that $\gamma (\mathbf{t}) = 1$. Suppose that $\alpha \in L$, where $L/K$ is a finite extension of $K$. For every $w \in M_L$, let $n_w$ denote the local-to-global degree of $L/K$ at $w$. For every $w \in M_L^S$, let $t_w = t_v$, where $w \mid v$ and $v \in S$. Then if $\alpha \in \P^1 (\overline{K})$, $h_{\lambda_\mathbf{t}} (\alpha) \geq 0$, with equality if and only if $\alpha = 0$, $\alpha = \infty$, or if the following two conditions hold:
    \begin{enumerate}[\indent (i)]
        \item If $w \in M_L^S$ then $\abs{\alpha}_w = t_w$.
        \item If $w \in M_L \setminus M_L^S$ then $\abs{\alpha}_w = 1$. 
    \end{enumerate}
    Furthermore, $\mathcal{L} (\lambda_\mathbf{t}) = 0$. 
\end{prop}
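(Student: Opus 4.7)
The strategy has three parts. The first is an algebraic simplification of the explicit height formula into a sum of manifestly nonnegative local terms; the second extracts the equality cases from the product formula; the third requires an existence result for algebraic numbers approximating the adelic set $\E_\mathbf{t}$, and this is where the real work lies.

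First I would apply Proposition \ref{expression for lambda heights} and simplify. The hypothesis $\gamma(\mathbf{t}) = 1$ yields $\sum_{v \in S} d_v \log t_v = 0$, so both $h_{\lambda_\mathbf{t}}(\infty) = 0$ and $\sum_{w \in M_L^S} n_w \log t_w = 0$. Using the identity $\log \max\{t_w,\abs{\alpha}_w\} = \log t_w + \log^+(\abs{\alpha}_w/t_w)$, the formula becomes
\[
h_{\lambda_\mathbf{t}}(\alpha) = \sum_{w \in M_L \setminus M_L^S} n_w \log^+ \abs{\alpha}_w + \sum_{w \in M_L^S} n_w \log^+ \frac{\abs{\alpha}_w}{t_w},
\]
which is a sum of nonnegative terms, so $h_{\lambda_\mathbf{t}}(\alpha) \geq 0$. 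The boundary cases $\alpha = 0$ and $\alpha = \infty$ are verified directly from Proposition \ref{expression for lambda heights} using $\log^+ 0 = 0$ and $\log\max\{t_w,0\} = \log t_w$, giving $h_{\lambda_\mathbf{t}}(0) = h_{\lambda_\mathbf{t}}(\infty) = 0$.

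Next, for $\alpha \in L^\times$ with $h_{\lambda_\mathbf{t}}(\alpha) = 0$, the vanishing of each $\log^+$ summand in the display above forces $\abs{\alpha}_w \leq 1$ at $w \in M_L \setminus M_L^S$ and $\abs{\alpha}_w \leq t_w$ at $w \in M_L^S$. I would then apply the product formula, which, combined with $\sum_{w \in M_L^S} n_w \log t_w = 0$, gives
\[
0 = \sum_{w \in M_L} n_w \log \abs{\alpha}_w \leq \sum_{w \in M_L^S} n_w \log t_w = 0,
\]
forcing every local inequality to be an equality; this yields conditions (i) and (ii), and the converse direction is immediate.

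Finally, to see $\mathcal{L}(\lambda_\mathbf{t}) = 0$, I would use the interpretation in (\ref{capacity of E}) that $\gamma_\infty(\E_\mathbf{t}) = \gamma(\mathbf{t}) = 1$. By the Fekete--Szeg\H{o} theorem for $K$-symmetric Berkovich adelic sets (as in \cite{baker-rumely.book}), every open adelic neighborhood of $\E_\mathbf{t}$ contains the complete sets of $K$-conjugates of infinitely many algebraic integers. For $\alpha$ an algebraic integer with all conjugates in the $\varepsilon$-neighborhood of $\E_\mathbf{t}$, the first sum in the decomposition above is supported only at archimedean places outside $M_L^S$ and the second only at $M_L^S$ (since $\abs{\alpha}_w \leq 1$ at all finite places outside $M_L^S$), so both sums are $O(\varepsilon)$ uniformly. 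This yields a sequence of distinct points $(\alpha_n)$ with $h_{\lambda_\mathbf{t}}(\alpha_n) \to 0$. The main obstacle is invoking Fekete--Szeg\H{o} correctly at the boundary case $\gamma_\infty = 1$ (rather than strictly greater), together with the bookkeeping needed to convert an adelic $\varepsilon$-neighborhood into the claimed bound on $h_{\lambda_\mathbf{t}}(\alpha)$.
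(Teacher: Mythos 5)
Your proof is correct and follows essentially the same strategy as the paper: simplify the height formula using $\gamma(\mathbf{t})=1$, read off nonnegativity from a local decomposition, squeeze with the product formula to get the equality conditions, and invoke adelic Fekete--Szeg\H{o} for $\mathcal{L}(\lambda_\mathbf{t})=0$. Your decomposition into $\log^+\abs{\alpha}_w$ and $\log^+(\abs{\alpha}_w/t_w)$ gives directly a sum of nonnegative terms and yields upper-bound conditions ($\abs{\alpha}_w \leq 1$, $\abs{\alpha}_w \leq t_w$), while the paper compares $\log^+$ and $\log\max\{t_w,\cdot\}$ against $\log\abs{\alpha}_w$ and gets lower-bound conditions ($\geq 1$, $\geq t_w$); either way the product formula together with $\sum_{w\in M_L^S}n_w\log t_w=0$ forces equalities, so this is cosmetic. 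Where you correctly flag but do not resolve the "main obstacle," the paper handles the boundary case $\gamma_\infty(\E_\mathbf{t})=1$ by a standard inflation: for fixed $\varepsilon>0$, take the open adelic neighborhood $\U$ with $U_v=\mathcal{D}_v(0,t_v+\varepsilon)^-$ at $v\in S$ and $U_v=\mathcal{D}_v(0,1)$ elsewhere, observe that $\U$ is an adelic neighborhood of a compact $K$-symmetric adelic set $\E$ with $\gamma_\infty(\E)=\prod_{v\in S}(t_v+\varepsilon/2)^{d_v}>1$, apply Fekete--Szeg\H{o} to $\E$, and deduce $h_{\lambda_\mathbf{t}}(\alpha)\leq\sum_{v\in S}d_v\log(t_v+\varepsilon)$ for the resulting algebraic points; letting $\varepsilon\to 0$ gives $\mathcal{L}(\lambda_\mathbf{t})=0$. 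You should fill in that inflation step explicitly rather than asserting that Fekete--Szeg\H{o} applies "directly" to a set of capacity exactly one, since the theorem as cited requires strict inequality.
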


\begin{proof}
    First observe that by a direct computation $h_{\lambda_\mathbf{t}} (0) = 0 = h_{\lambda_\mathbf{t}} (\infty)$. So we only need to consider the case when $\alpha \in \overline{K}^\times$. Observe that for any $t > 0$ and for all $x \in (0,\infty)$, $\log \max \{ t, x\} \geq \log x$, with equality if and only if $x \geq t$. By this observation and the product formula,
    \begin{gather*}
        h_{\lambda_\mathbf{t}} (\alpha) \geq \sum_{w \in M_L} n_w \log \abs{\alpha}_w = 0,
    \end{gather*}
    where equality holds if and only if the following two conditions hold:
    \begin{enumerate}[\indent (i)]
        \item If $w \in M_L^S$ then $\abs{\alpha}_w \geq t_w$. \label{lambda height nonstrict conditions i}
        \item If $w \in M_L \setminus M_L^S$ then $\abs{\alpha}_w \geq 1$. \label{lambda height nonstrict conditions ii}
    \end{enumerate}
    However, by the product formula and by the fact that $$\sum_{v \in S} d_v \log t_v = 0,$$ conditions (\ref{lambda height nonstrict conditions i}) and (\ref{lambda height nonstrict conditions ii}) can hold if and only if equality holds in both statements.

    Now, we just need to show that $\mathcal{L} (\lambda_\mathbf{t}) = 0$. We use the adelic Fekete-Szeg\"o theorem \cite[Theorem 6.27]{baker-rumely.book}. Let $\varepsilon > 0$ and consider the adelic set
    \begin{gather*}
        \U = \prod_{v \in M_K} U_v,
    \end{gather*}
    where $U_v \subset \P_v^1$ is defined by
    \begin{gather*}
        U_v = \begin{cases}
            \mathcal{D}_v (0,t_v + \varepsilon)^- & \text{if } v \in S,\\
            \mathcal{D}_v (0,1) & \text{if } v \in M_K \setminus S.
        \end{cases}
    \end{gather*}
     Note that $\U$ is an adelic neighborhood of the $K$-symmetric compact Berkovich adelic set $$\E = \prod_{v \in M_K} E_v,$$ where $$E_v = \begin{cases}
         \mathcal{D}_v (0,r_v + \nicefrac{\varepsilon}{2}) & \text{if } v \in S,\\
         \mathcal{D}_v (0,1) & \text{if } v \in M_K \setminus S.
     \end{cases}$$ Furthermore, $$\gamma_\infty (\E) = \prod_{v \in S} (t_v + \nicefrac{\varepsilon}{2})^{d_v} > \prod_{v \in S} t_v^{d_v} = 1.$$ Thus we may use the adelic Fekete-Szeg\"o theorem to find infinitely many $\alpha \in \overline{K}$ such that all Galois conjugates of $\alpha$ are in $\U$ for every $v \in M_K$. Explicitly, this means that the following two conditions hold for any such $\alpha$:
    \begin{enumerate}[\indent (i)]
        \item If $w \in M_{K(\alpha)}^S$ then $\abs{\alpha}_w < t_w + \varepsilon$.
        \item If $w \in M_{K(\alpha)} \setminus M_{K(\alpha)}^S$ then $\abs{\alpha}_w \leq 1$.
    \end{enumerate}
    Thus we have the estimate
    \begin{gather*}
        h_{\lambda_\mathbf{t}} (\alpha) \leq \sum_{v \in S} d_v \log (t_v + \varepsilon).
    \end{gather*}
    By letting $\varepsilon \to 0$, we see that $\mathcal{L} (\lambda_\mathbf{t}) = 0$. 
\end{proof}

For our application of the heights $h_{\lambda_\mathbf{t}}$ to areal Weil heights, we will use the Arakelov-Zhang pairing of adelic measures. In particular, we need the following result.

\begin{prop} \label{pairing of lambda and rho heights}
    Let $S \subset M_K$ be a finite set of places of a number field $K$, and let $\mathbf{r} = (r_v)_{v \in S} \in (0,\infty)^S$. Let $S' \subset M_K$ be a finite set of places such that $S \subset S'$ and let $\mathbf{t} = (t_v)_{v \in S'} \in (0,\infty)^{S'}$. Furthermore, suppose that $\gamma (\mathbf{t}) = 1$. Then we have
    \begin{gather*}
        \AZ{\rho_\mathbf{r}}{\lambda_\mathbf{t}} = h_{\rho_\mathbf{r}} (\infty) + \sum_{v \in S} d_v f_{r_v} (t_v) + \sum_{v \in S'\setminus S} d_v \log^+ t_v.
    \end{gather*}
\end{prop}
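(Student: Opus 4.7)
The plan is to expand the Arakelov--Zhang pairing into its three mutual-energy pieces and evaluate each locally using the potential computations already in hand.

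First I would write
\begin{gather*}
\AZ{\rho_\mathbf{r}}{\lambda_\mathbf{t}} = \frac{1}{2}\sum_{v \in M_K} d_v (\rho_{\mathbf{r},v},\rho_{\mathbf{r},v})_v - \sum_{v \in M_K} d_v (\rho_{\mathbf{r},v},\lambda_{\mathbf{t},v})_v + \frac{1}{2}\sum_{v \in M_K} d_v (\lambda_{\mathbf{t},v},\lambda_{\mathbf{t},v})_v.
\end{gather*}
The first sum is exactly $h_{\rho_\mathbf{r}}(\infty)$ by the calculation used in the proof of Theorem~\ref{areal height formula}. The last sum is $h_{\lambda_\mathbf{t}}(\infty)$, which by Proposition~\ref{expression for lambda heights} equals $-\tfrac{1}{2}\sum_{v \in S'} d_v \log t_v$; since we are assuming $\gamma(\mathbf{t}) = 1$, this vanishes. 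So the whole task reduces to computing the cross term $-\sum_v d_v (\rho_{\mathbf{r},v}, \lambda_{\mathbf{t},v})_v$.

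Next I would compute the local mutual energies place by place, using Fubini to write
\begin{gather*}
-(\rho_{\mathbf{r},v}, \lambda_{\mathbf{t},v})_v = \int_{\sA_v^1} p_{\rho_{\mathbf{r},v}}(w)\, d\lambda_{\mathbf{t},v}(w).
\end{gather*}
For $v \notin S'$ both measures equal the standard $\lambda_v$, so $p_{\rho_{\mathbf{r},v}}(w) = \log^+ |w|_v$ vanishes on the support of $\lambda_v$ and the contribution is $0$. For $v \in S$, Proposition~\ref{potential function} gives $p_{\rho_{\mathbf{r},v}}(w) = f_{r_v}(|w|_v)$. Since $\lambda_{\mathbf{t},v}$ is supported on the ``circle'' $|w|_v = t_v$ (either the archimedean circle of radius $t_v$ or the Gauss-type point $\zeta_{0,t_v}$ in the Berkovich case, where the Hsia kernel gives $|\zeta_{0,t_v}|_v = t_v$), and $p_{\rho_{\mathbf{r},v}}$ is radial, the integral collapses to the constant $f_{r_v}(t_v)$. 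For $v \in S' \setminus S$ we have $\rho_{\mathbf{r},v} = \lambda_v$, hence $p_{\rho_{\mathbf{r},v}}(w) = \log \max\{1,|w|_v\}$; evaluating at $|w|_v = t_v$ yields $\log^+ t_v$.

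Combining the three cases gives
\begin{gather*}
-\sum_{v \in M_K} d_v (\rho_{\mathbf{r},v}, \lambda_{\mathbf{t},v})_v = \sum_{v \in S} d_v f_{r_v}(t_v) + \sum_{v \in S' \setminus S} d_v \log^+ t_v,
\end{gather*}
and adding this to $h_{\rho_\mathbf{r}}(\infty)$ yields the claimed formula. The only real obstacle is the nonarchimedean case of the Fubini/radial-evaluation step: I need to verify that $p_{\rho_{\mathbf{r},v}}$ extends continuously to $\sP_v^1$ and takes the value $f_{r_v}(t_v)$ at the Berkovich point $\zeta_{0,t_v}$. This follows from Corollary~\ref{potentials are continuous} together with the fact that the Hsia kernel treats $\zeta_{0,t_v}$ as a point of absolute value $t_v$, so the explicit formula of Proposition~\ref{potential function} applies directly.
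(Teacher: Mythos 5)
Your proposal is correct and follows essentially the same route as the paper: both expand the Arakelov--Zhang pairing into the two diagonal energies and a cross term, identify the diagonal pieces with $h_{\rho_\mathbf{r}}(\infty)$ and $h_{\lambda_\mathbf{t}}(\infty)$ (the latter vanishing because $\gamma(\mathbf{t})=1$), and then evaluate the cross term place-by-place by integrating the radial potential $p_{\rho_{\mathbf{r},v}}$ against $\lambda_{\mathbf{t},v}$, which is concentrated where $|w|_v = t_v$. The only cosmetic difference is that the paper cites \cite[Proposition 7]{fili_ui} for the three-term decomposition while you expand it directly, and you spell out the Fubini/continuity justification for the nonarchimedean case slightly more explicitly.
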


\begin{proof}
    Using \cite[Proposition 7]{fili_ui}, we compute 
    \begin{align*}
        \AZ{\rho_\mathbf{r}}{\lambda_\mathbf{t}} = & h_{\rho_\mathbf{r}} (\infty) + h_{\lambda_\mathbf{t}} (\infty) - \sum_{v \in M_K} d_v (\rho_{\mathbf{r},v}, \lambda_{\mathbf{t},v}) \\
        = & h_{\rho_\mathbf{r}} (\infty) + \sum_{v \in S'} d_v \int \log \abs{z-w} \, d\lambda_{\mathbf{t},v} (w) d\rho_{\mathbf{r},v} (z) \\
        = & h_{\rho_\mathbf{r}} (\infty) + \sum_{v \in S} d_v \int f_{r_v} (\abs{z}_v) \, d\lambda_{\mathbf{t},v} (z) + \sum_{v \in S'\setminus S} d_v \int \log^+ \abs{z}_v d\lambda_{\mathbf{t},v} (z) \\
        = & h_{\rho_\mathbf{r}} (\infty) + \sum_{v \in S} d_v f_{r_v} (t_v) + \sum_{v \in S'\setminus S} d_v \log^+ t_v. \qedhere
    \end{align*}
\end{proof}

\begin{rmk}
    An alternative proof of Proposition \ref{pairing of lambda and rho heights} involves finding a sequence of distinct points $(\alpha_n)_{n=1}^\infty \in \P^1 (\overline{K})^\N$ such that $h_{\lambda_\mathbf{t}} (\alpha_n)\to 0$ as $n \to \infty$ and computing $\lim_{n\to\infty} h_{\rho_\mathbf{r}} (\alpha_n)$, which is equal to $\AZ{\rho_\mathbf{r}}{\lambda_\mathbf{t}}$ by Theorem \ref{pst theorem}.
\end{rmk}

\subsection{Analogs of Kronecker's Theorem}

We are now ready to prove Theorems \ref{kronecker when gamma (r) < 1} and \ref{kronecker when gamma (r) > 1}.

\begin{proof}[Proof of Theorem \ref{kronecker when gamma (r) < 1}]
    We compute
    \begin{align}
        h_{\rho_\mathbf{r}} (\alpha) - h_{\rho_\mathbf{r}} (\infty) = &  \; \sum_{w \in M_L} n_w \log^+ \abs{\alpha}_w + \sum_{w \in M_L} n_w f_{r_w} (\abs{\alpha}_w) \nonumber \\
        \geq & \; \sum_{w \in M_L} n_w \log^+ \abs{\alpha}_w + \sum_{w \in M_L} n_w \log \abs{\alpha}_w \label{lower bound initial case computation line 2} \\ \geq & \;
        \sum_{w \in M_L} n_w \log \abs{\alpha}_w \label{lower bound initial case computation line 3} \\
        = & \; 0. \label{lower bound initial case computation line 4}
    \end{align}
    Inequality (\ref{lower bound initial case computation line 2}) follows from Lemma \ref{f_r properties} (\ref{f_r properties a}), and (\ref{lower bound initial case computation line 4}) follows from the product formula. By Lemma \ref{f_r properties} (\ref{f_r properties a}), equality in (\ref{lower bound initial case computation line 2}) holds if and only if $\abs{\alpha}_w \geq r_w$ whenever $w \in M_L^S$. Equality in (\ref{lower bound initial case computation line 3}) holds if and only if $\abs{\alpha}_w \geq 1$ whenever $w \in M_L \setminus M_L^S$.

    Now, suppose that $\gamma (\mathbf{r}) \leq 1$. Pick a finite set of places $S' \subset M_K$ such that $S \subset S'$ and $\mathbf{t} = (t_v)_{v \in S'} \in (0,\infty)^{S'}$ such that $t_v \geq r_v$ for every $v \in S$, $t_v \geq 1$ for every $v \in S'\setminus S$, and $\gamma (\mathbf{t}) = 1$. Using Proposition \ref{kronecker for lambda heights}, find a sequence of distinct algebraic numbers $(\alpha_n)_{n=1}^\infty \in \P^1 (\overline{K})^\N$ such that $h_{\lambda_\mathbf{t}} (\alpha_n) \to 0$ as $n \to \infty$. Then by Theorem \ref{pst theorem},
    \begin{align*}
        \lim_{n\to\infty} h_{\rho_\mathbf{r}} (\alpha_n) = \AZ{\rho_\mathbf{r}}{\lambda_\mathbf{t}}. 
    \end{align*}
    Using Proposition \ref{pairing of lambda and rho heights}, we compute
    \begin{align*}
        \AZ{\rho_\mathbf{r}}{\lambda_\mathbf{t}} = & h_{\rho_\mathbf{r}} (\infty) + \sum_{v \in S} d_v f_{r_v} (t_v) +\sum_{v \in S' \setminus S} d_v \log^+ t_v \\
        = & h_{\rho_\mathbf{r}} (\infty) + \sum_{v \in S'} d_v \log t_v \\
        = & h_{\rho_\mathbf{r}} (\infty).
    \end{align*}
    Thus we have demonstrated that $\mathcal{L}(\rho_\mathbf{r}) = h_{\rho_\mathbf{r}} (\infty)$.
\end{proof}

Suppose that $\alpha \in L^\times$, where $L/K$ is a finite extension, and suppose that $\rho_\mathbf{r}$ is an areal adelic measure with $\gamma (\mathbf{r}) \leq 1$. If (\ref{lower bound initial case i}) and (\ref{lower bound initial case ii}) hold in Theorem \ref{kronecker when gamma (r) < 1}, then we have 
\begin{gather*}
    0 = \sum_{w \in M_L} n_w \log \abs{\alpha}_w \geq \sum_{v \in S} d_v \log r_v = \log \gamma (\mathbf{r}).
\end{gather*}
Thus we cannot have $h_{\rho_\mathbf{r}} (\alpha) = h_{\rho_\mathbf{r}} (\infty)$ if $\gamma (\mathbf{r}) > 1$ (however we could have $h_{\rho_\mathbf{r}} (0) = h_{\rho_\mathbf{r}} (\infty)$). Even if $\gamma (\mathbf{r}) \leq 1$, it might be the case that there does not exist $\alpha \in \overline{K}^\times$ such that $h_{\rho_\mathbf{r}} (\alpha) = h_{\rho_\mathbf{r}} (\infty)$, as the following example demonstrates.

\begin{example}
    Suppose that $r_v$ is transcendental for some $v \in S$ and suppose that $\gamma (\mathbf{r}) = 1$. Then for any $\alpha \in \overline{K}$, we have $\abs{\alpha}_w \neq r_v$ for any $w \in M_{K(\alpha)}$ such that $w \mid v$. Also, since $\gamma (\mathbf{r}) = 1$, by the product formula equality must hold in both (\ref{lower bound initial case i}) and (\ref{lower bound initial case ii}). Thus by Theorem \ref{kronecker when gamma (r) < 1} $h_{\rho_\mathbf{r}} (\alpha) \neq h_{\rho_\mathbf{r}} (\infty)$.
\end{example}

\begin{proof}[Proof of Theorem \ref{kronecker when gamma (r) > 1}]
    Suppose that $\alpha \in L^\times$, where $L/K$ is a finite extension. Following previous notation, for every $w \in M_L^S$, let $t_w = t_v$. Consider the inequality 
    \begin{gather} \label{ineq1 for kronecker when gamma(r) >1}
        c^{-2} (h_{\rho_\mathbf{r}} (\alpha) - \AZ{\rho_\mathbf{r}}{\lambda_\mathbf{t}}) \geq h_{\rho_\mathbf{t}} (\alpha) - h_{\rho_\mathbf{t}} (\infty).
    \end{gather}
    A direct computation gives that the right-hand side of (\ref{ineq1 for kronecker when gamma(r) >1}) is
    \begin{multline} \label{ineq2 for kronecker when gamma(r) >1}
        c^{-2} (h_{\rho_\mathbf{r}} (\alpha) - \AZ{\rho_\mathbf{r}}{\lambda_\mathbf{t}} ) = \sum_{w \in M_L \setminus M_L^S} n_w c^{-2} \log^+ \abs{\alpha}_w \\+ \sum_{w \in M_L^S} n_w c^{-2} (f_{r_w} (\abs{\alpha}_w) - f_{r_w} (t_w) ).
    \end{multline}
    Since $c^{-2} > 1$, we have that 
    \begin{multline} \label{ineq3 for kronecker when gamma(r) >1}
        c^{-2} (h_{\rho_\mathbf{r}} (\alpha) - \AZ{\rho_\mathbf{r}}{\lambda_\mathbf{t}}) \geq 
        \sum_{w \in M_L \setminus M_L^S} n_w \log^+ \abs{\alpha}_w \\+ \sum_{w\in M_L^S} n_w c^{-2} (f_{r_w} (\abs{\alpha}_w) - f_{r_w} (t_w) ),
    \end{multline}
    with equality if and only if $\abs{\alpha}_w \leq 1$ whenever $w \in M_L \setminus M_L^S$. Furthermore, by elementary calculus, 
    \begin{gather*} 
        c^{-2} (f_{r_w} (x) - f_{r_w} (t_w)) \geq f_{t_w} (x) - \log t_w,
    \end{gather*}
    with equality if and only if $x \leq t_w$. This implies that 
    \begin{align}
    \begin{split} \label{ineq4 for kronecker when gamma(r) >1}
        \sum_{w \in M_L^S} n_w c^{-2} (f_{r_w} (\abs{\alpha}_w) - f_{r_w} (t_w) ) \geq & \sum_{w \in M_L^S} n_w (f_{t_w} (\abs{\alpha}_w) - \log t_w) \\ = & \sum_{w \in M_L^S} n_w f_{t_w} (\abs{\alpha}_w),
    \end{split}
    \end{align}
    where equality holds if and only if $\abs{\alpha}_w \leq t_w$ whenever $w \in M_L^S$. Combining (\ref{ineq2 for kronecker when gamma(r) >1}), (\ref{ineq3 for kronecker when gamma(r) >1}), and (\ref{ineq4 for kronecker when gamma(r) >1}), we obtain
    \begin{align*}
        \begin{split}
            c^{-2} (h_{\rho_\mathbf{r}} (\alpha) - \AZ{\rho_\mathbf{r}}{\lambda_\mathbf{t}}) \geq & \sum_{w \in M_L \setminus M_L^S} n_w \log^+ \abs{\alpha}_w + \sum_{w \in M_L^S} n_w f_{t_w} (\abs{\alpha}_w) \\
            = & h_{\rho_\mathbf{t}} (\alpha) - h_{\rho_\mathbf{t}} (\infty),
        \end{split}
    \end{align*}
    with equality if and only if the following two conditions hold:
    \begin{enumerate}[\indent (i)]
        \item If $w \in M_L^S$ then $\abs{\alpha}_w \leq t_w$.
        \item If $w \in M_L \setminus M_L^S$ then $\abs{\alpha}_w \leq 1$.
    \end{enumerate}
    Since $\gamma (\mathbf{t}) = 1$, by the product formula, the above conditions can only hold when both inequalities are replaced by equalities. Furthermore, these are precisely the conditions for $h_{\rho_\mathbf{t}} (\alpha) = h_{\rho_\mathbf{t}} (\infty)$ by Theorem \ref{kronecker when gamma (r) < 1}.

    Now we just need to show that $\mathcal{L}(\rho_\mathbf{r}) = \AZ{\rho_\mathbf{r}}{\lambda_\mathbf{t}}$. Using Proposition \ref{kronecker for lambda heights}, find a sequence of distinct points $(\alpha_n)_{n=1}^\infty \in \P^1 (\overline{K})^\N$ such that $h_{\lambda_\mathbf{t}} (\alpha_n) \to 0$ as $n \to \infty$. Then by Theorem \ref{pst theorem},
    \begin{gather*}
        \lim_{n\to\infty} h_{\rho_\mathbf{r}} (\alpha_n) = \AZ{\rho_\mathbf{r}}{\lambda_\mathbf{t}}.    \qedhere
    \end{gather*}    
\end{proof}

We now summarize what we have shown about the spectrum of values $h_{\rho_\mathbf{r}} (\P^1 (\overline{K}))$ for an areal Weil height $h_{\rho_\mathbf{r}}$. The following is a corollary of Theorems \ref{kronecker when gamma (r) < 1} and \ref{kronecker when gamma (r) > 1}.

\begin{cor} \label{spectrum of areal height corollary}
    If $\rho_\mathbf{r}$ is an areal adelic measure, then $\mathcal{L} (\rho_\mathbf{r}) > 0$. Furthermore, $h_{\rho_\mathbf{r}} (0)$ is an isolated point of $h_{\rho_\mathbf{r}} (\P^1 (\overline{K}))$. When $\gamma (\mathbf{r}) > 1$, then $h_{\rho_\mathbf{r}} (\infty)$ is also an isolated point of $h_{\rho_\mathbf{r}} (\P^1 (\overline{K}))$ . 
\end{cor}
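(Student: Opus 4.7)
The proof splits into three parts, unified by an isolation principle drawn from the definition of the essential minimum as a liminf.

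\emph{Positivity of $\mathcal{L}(\rho_\mathbf{r})$.} When $\gamma(\mathbf{r})\leq 1$, Theorem \ref{kronecker when gamma (r) < 1} gives $\mathcal{L}(\rho_\mathbf{r})=h_{\rho_\mathbf{r}}(\infty)=\tfrac{1}{8}\sum_{v\in S}d_v-\tfrac{1}{2}\sum_{v\in S}d_v\log r_v$; the hypothesis yields $\sum_{v\in S}d_v\log r_v\leq 0$, so $\mathcal{L}(\rho_\mathbf{r})\geq\tfrac{1}{8}\sum_{v\in S}d_v>0$. When $\gamma(\mathbf{r})>1$, Theorem \ref{kronecker when gamma (r) > 1} gives $\mathcal{L}(\rho_\mathbf{r})=\AZ{\rho_\mathbf{r}}{\lambda_\mathbf{t}}$, which by \cite[Theorem 1]{fili_ui} is the square of a metric on adelic measures and hence vanishes only when the measures coincide; but at any $v\in S$, $\rho_{\mathbf{r},v}$ is an areal measure while $\lambda_{\mathbf{t},v}$ is either arc-length on a circle or a Gauss-point mass, so $\mathcal{L}(\rho_\mathbf{r})>0$.

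\emph{Isolation of $h_{\rho_\mathbf{r}}(0)$.} The plan is to show (a) $h_{\rho_\mathbf{r}}(\alpha)>h_{\rho_\mathbf{r}}(0)$ for every $\alpha\in\overline{K}^\times$, and (b) $h_{\rho_\mathbf{r}}(0)<\mathcal{L}(\rho_\mathbf{r})$. Together these suffice: picking any $\beta$ with $h_{\rho_\mathbf{r}}(0)<\beta<\mathcal{L}(\rho_\mathbf{r})$, the set $F=\{\alpha\in\P^1(\overline{K}):h_{\rho_\mathbf{r}}(\alpha)<\beta\}$ is finite (by the liminf definition), values outside $F$ are $\geq\beta$, and the values $h_{\rho_\mathbf{r}}(\alpha)$ for $\alpha\in F\setminus\{0\}$ are finitely many and each strictly greater than $h_{\rho_\mathbf{r}}(0)$ by (a), so the minimum distance from $h_{\rho_\mathbf{r}}(0)$ to $h_{\rho_\mathbf{r}}(\P^1(\overline{K}))\setminus\{h_{\rho_\mathbf{r}}(0)\}$ is positive. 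For (a), Theorem \ref{areal height formula} gives
\begin{align*}
h_{\rho_\mathbf{r}}(\alpha)-h_{\rho_\mathbf{r}}(0)=\sum_{w\in M_L\setminus M_L^S}n_w\log^+\abs{\alpha}_w+\sum_{w\in M_L^S}n_w\bigl(f_{r_w}(\abs{\alpha}_w)-f_{r_w}(0)\bigr),
\end{align*}
a sum of nonnegative terms ($f_{r_w}(x)-f_{r_w}(0)$ equals $x^2/(2r_w^2)$ for $x\leq r_w$ and exceeds $1/2$ otherwise) vanishing only when $\abs{\alpha}_w=0$ at every $w\in M_L^S$, i.e.\ $\alpha=0$. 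For (b), a direct computation gives $h_{\rho_\mathbf{r}}(0)\leq h_{\rho_\mathbf{r}}(\infty)-\tfrac{1}{2}\sum_{v\in S}d_v<\mathcal{L}(\rho_\mathbf{r})$ when $\gamma(\mathbf{r})\leq 1$; when $\gamma(\mathbf{r})>1$, Proposition \ref{pairing of lambda and rho heights} combined with $f_{r_v}(cr_v)=\log r_v-\tfrac{1}{2}+\tfrac{c^2}{2}$ yields the clean identity $\mathcal{L}(\rho_\mathbf{r})-h_{\rho_\mathbf{r}}(0)=\tfrac{c^2}{2}\sum_{v\in S}d_v>0$.

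\emph{Isolation of $h_{\rho_\mathbf{r}}(\infty)$ when $\gamma(\mathbf{r})>1$.} Theorem \ref{kronecker when gamma (r) > 1} already gives $h_{\rho_\mathbf{r}}(\alpha)\geq\mathcal{L}(\rho_\mathbf{r})$ for every $\alpha\in\overline{K}^\times$, so the same $F$-based argument applies once $h_{\rho_\mathbf{r}}(\infty)<\mathcal{L}(\rho_\mathbf{r})$ is established. Proposition \ref{pairing of lambda and rho heights} gives $\mathcal{L}(\rho_\mathbf{r})-h_{\rho_\mathbf{r}}(\infty)=\sum_{v\in S}d_v(\log r_v-\tfrac{1}{2}+\tfrac{c^2}{2})$, and the constraint $\gamma(\mathbf{t})=1$ rewrites $\sum_{v\in S}d_v\log r_v$ as $-(\sum_{v\in S}d_v)\log c$, turning the gap into $(\sum_{v\in S}d_v)\bigl(-\log c-\tfrac{1-c^2}{2}\bigr)$. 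I expect this final step to be the main (albeit elementary) obstacle; it reduces to the calculus inequality $-\log c>\tfrac{1-c^2}{2}$ for $c\in(0,1)$, which follows because $g(u)=u\log u-u+1$ satisfies $g(1)=0$ and $g'(u)=\log u>0$ for $u>1$, evaluated at $u=1/c^2$.
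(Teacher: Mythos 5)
Your proof is correct and follows essentially the same approach as the paper: identify $\mathcal{L}(\rho_\mathbf{r})$ with $\AZ{\rho_\mathbf{r}}{\lambda_\mathbf{t}}$ (positive by the metric property of the pairing), invoke the observation that any value strictly below $\mathcal{L}(\rho_\mathbf{r})$ is automatically isolated because only finitely many points lie below any threshold beneath the essential minimum, and then verify $h_{\rho_\mathbf{r}}(0) < \mathcal{L}(\rho_\mathbf{r})$ (both cases) and $h_{\rho_\mathbf{r}}(\infty) < \mathcal{L}(\rho_\mathbf{r})$ (when $\gamma(\mathbf{r})>1$) by the same explicit formulas from Example \ref{height of 0} and Proposition \ref{pairing of lambda and rho heights}. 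The only cosmetic differences are that you give a self-contained direct lower bound for $\mathcal{L}$ when $\gamma(\mathbf{r})\le 1$ rather than appealing to the metric property there too, you additionally prove the (not strictly necessary) strict inequality $h_{\rho_\mathbf{r}}(\alpha)>h_{\rho_\mathbf{r}}(0)$ for all $\alpha\in\overline{K}^\times$, and you spell out the elementary calculus inequality $-\log c > \tfrac{1-c^2}{2}$ which the paper leaves tacit.
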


\begin{proof}
    Given $\rho_\mathbf{r}$, we may find $\lambda_\mathbf{t}$ such that $\mathcal{L} (\rho_\mathbf{r}) = \AZ{\rho_\mathbf{r}}{\lambda_\mathbf{t}}$. This was done in the proof of Theorem \ref{kronecker when gamma (r) < 1} when $\gamma (\mathbf{r}) \leq 1$ and in the proof of Theorem \ref{kronecker when gamma (r) > 1} when $\gamma (\mathbf{r}) > 1$. By \cite[Theorem 1]{fili_ui} we have $\AZ{\rho_\mathbf{r}}{\lambda_\mathbf{t}} > 0$.

    By the definition of $\mathcal{L}(\rho_\mathbf{r})$, if $\alpha \in \P^1 (\overline{K})$ such that $h_{\rho_\mathbf{r}} (\alpha) < \mathcal{L}(\rho_\mathbf{r})$, then $h_{\rho_\mathbf{r}} (\alpha)$ is isolated in $h_{\rho_\mathbf{r}} (\P^1 (\overline{K}))$. In the case where $\gamma (\mathbf{r}) \leq 1$, we have 
    \begin{gather*}
        h_{\rho_\mathbf{r}} (0) = h_{\rho_\mathbf{r}} (\infty) +  \sum_{v \in S} d_v \left ( \log r_v - \frac{1}{2} \right ) 
        \leq  h_{\rho_\mathbf{r}} (\infty) + \sum_{v \in S} -\frac{d_v}{2} < h_{\rho_\mathbf{r}} (\infty) = \mathcal{L} (\rho_\mathbf{r}),
    \end{gather*}
    demonstrating that $h_{\rho_\mathbf{r}}(0)$ is isolated in $h_{\rho_\mathbf{r}} (\P^1 (\overline{K}))$.
    If $\gamma (\mathbf{r}) > 1$, then by Proposition \ref{pairing of lambda and rho heights} and Theorem \ref{kronecker when gamma (r) > 1} we have 
    $$\mathcal{L} (\rho_\mathbf{r}) = \AZ{\rho_\mathbf{r}}{\lambda_\mathbf{t}} = h_{\rho_\mathbf{r}} (\infty) + \sum_{v \in S} d_v \left ( \log r_v - \frac{1}{2} + \frac{t_v^2}{2 r_v^2} \right ),$$ where $\mathbf{t} = (t_v)_{v \in S} = (cr_v)_{v \in S}$ is defined as in Theorem \ref{kronecker when gamma (r) > 1}. Since $0 < \nicefrac{t_v^2}{2r_v^2}$ for any $v \in S$, we get that $h_{\rho_\mathbf{r}} (0) < \mathcal{L} (\rho_\mathbf{r})$. Also,
    \begin{align*}
        \sum_{v \in S} d_v \left ( \log r_v - \frac{1}{2} + \frac{t_v^2}{2 r_v^2} \right ) = & \sum_{v \in S} d_v \left ( \log c^{-1} + \log t_v - \frac{1}{2} + \frac{1}{2c^{-2}} \right ) \\
        = & \sum_{v \in S} d_v \left ( \log c^{-1} - \frac{1}{2} + \frac{1}{2c^{-2}} \right ) \\
        > & 0,
    \end{align*}
    which implies that $h_{\rho_\mathbf{r}} (\infty) < \mathcal{L} (\rho_\mathbf{r})$. Thus we have shown that when $\gamma (\mathbf{r}) > 1$, both $h_{\rho_\mathbf{r}} (0)$ and $h_{\rho_\mathbf{r}} (\infty)$ are isolated in $h_{\rho_\mathbf{r}} (\P^1 (\overline{K}))$. 
\end{proof}

\section{Equidistribution} \label{equidistribution section}

The goal of this section is to prove Proposition \ref{equidistribution for gamma (r) < 1} and Theorem \ref{equidistribution for gamma (r) > 1}. We begin with Proposition \ref{equidistribution for gamma (r) < 1}, which follows from the proof of Theorem \ref{kronecker when gamma (r) < 1}. 

\begin{proof}[Proof of Proposition \ref{equidistribution for gamma (r) < 1}]
    In the proof of Theorem \ref{kronecker when gamma (r) < 1}, we computed that $\mathcal{L} (\rho_\mathbf{r}) = \AZ{\rho_\mathbf{r}}{\lambda_\mathbf{t}} = h_{\rho_\mathbf{r}} (\infty)$. Thus by Theorem \ref{pst theorem}, for any sequence of distinct points $(\alpha_n)_{n=1}^\infty \in \P^1 (\overline{K})^\N$ such that $h_{\lambda_\mathbf{t}} (\alpha_n) \to 0$, we have $h_{\rho_\mathbf{r}} (\alpha_n) \to h_{\rho_\mathbf{r}} (\infty)$. Furthermore, such a sequence exists by Proposition \ref{kronecker for lambda heights}. For any such sequence, by Theorem \ref{frl thm 2}, $[\alpha_n]_v \overset{*}{\to} \lambda_{\mathbf{t}, v}$ for all $v \in M_K$. 
\end{proof}

Next, we prove Theorem \ref{equidistribution for gamma (r) > 1}, which is an analog of Bilu's theorem for areal Weil heights $h_{\rho_\mathbf{r}}$ with $\gamma (\rho_\mathbf{r}) \geq 1$. For our proof, we follow the convention for writing $h_{\rho_\mathbf{r}}$ that is given in Remark \ref{another expression for areal weil heights}.

\begin{proof}
    First suppose that $h_{\lambda_\bt} (\alpha_n) \to 0$ as $n \to \infty$. Then Theorem \ref{pst theorem} implies that $h_{\rho_\mathbf{r}} (\alpha_n) \to \AZ{\rho_\mathbf{r}}{\lambda_\bt}$ as $n \to \infty$.

    Now suppose that $h_{\rho_\mathbf{r}} (\alpha_n) \to \AZ{\rho_\mathbf{r}}{\lambda_\bt}$ as $n \to \infty$. We first consider the case where $c=1$ and $\br = \bt$. In this case, $\AZ{\rho_\mathbf{r}}{\lambda_\mathbf{t}} = h_{\rho_\mathbf{r}} (\infty)$ by Theorem \ref{kronecker when gamma (r) < 1}. For each $n \in \N$, let $\ell_n = [K(\alpha_n) : K]$ and let $\alpha_{n,1}, \cdots, \alpha_{n,\ell_n} \in \overline{K}$ be the distinct Galois conjugates of $\alpha_n$ over $K$. We have
    \begin{multline} \label{equidistribution equation 1}
        h_{\rho_\mathbf{r}} (\alpha_n) - h_{\rho_\mathbf{r}} (\infty) =  \sum_{v \in M_K \setminus S} \frac{d_v}{\ell_n} \sum_{k=1}^{\ell_n} \log^+ \abs{\alpha_{n,k}}_v \\ + \sum_{v \in S} \frac{d_v}{\ell_n} \left ( \sum_{\abs{\alpha_{n,k}}_v < r_v} \left ( \log r_v - \frac{1}{2} + \frac{\abs{\alpha_{n,k}}_v^2}{2r_v^2} \right ) + \sum_{\abs{\alpha_{n,k}}_v \geq r_v} \log \abs{\alpha_{n,k}}_v  \right ).
    \end{multline}
    Note by the product formula that 
    \begin{multline*}
        \sum_{v \in M_K \setminus S} \frac{d_v}{\ell_n} \sum_{k=1}^n \log^+ \abs{\alpha_{n,k}}_v + \sum_{v \in S} \frac{d_v}{\ell_n} \sum_{\abs{\alpha_{n,k}}_v \geq r_v} \log \abs{\alpha_{n,k}}_v \\ = -\sum_{v \in M_K \setminus S} \frac{d_v}{\ell_n} \sum_{\abs{\alpha_{n,k}}_v < 1} \log \abs{\alpha_{n,k}}_v - \sum_{v \in S} \frac{d_v}{\ell_n} \sum_{\abs{\alpha_{n,k}}_v < r_v} \log \abs{\alpha_{n,k}}_v.
    \end{multline*}
    Thus (\ref{equidistribution equation 1}) becomes
    \begin{multline*} 
        h_{\rho_\mathbf{r}} (\alpha_n) - h_{\rho_\mathbf{r}} (\infty) = \sum_{v \in M_K \setminus S} \frac{d_v}{\ell_n} \sum_{\abs{\alpha_{n,k}}_v < 1} -\log \abs{\alpha_{n,k}}_v \\+ \sum_{v \in S} \frac{d_v}{\ell_n} \sum_{\abs{\alpha_{n,k}}_v < r_v} \left (\log r_v - \frac{1}{2} + \frac{\abs{\alpha_{n,k}}_v^2}{2r_v^2} - \log \abs{\alpha_{n,k}}_v \right ).
    \end{multline*}
    This implies that 
    \begin{gather*}
        h_{\rho_\mathbf{r}} (\alpha_n) - h_{\rho_\mathbf{r}} (\infty) \geq \sum_{v \in S} \frac{d_v}{\ell_n} \sum_{\abs{\alpha_{n,k}}_v < r_v} \left (\log r_v - \frac{1}{2} + \frac{\abs{\alpha_{n,k}}_v^2}{2r_v^2} - \log \abs{\alpha_{n,k}}_v \right ).
    \end{gather*}
    By Lemma \ref{f_r properties} (\ref{f_r properties a}) we know that $$\log r_v - \frac{1}{2} + \frac{\abs{\alpha_{n,j}}_v^2}{2r_v^2} - \log \abs{\alpha_{n,j}} \geq 0$$ and that $$\log r_v - \frac{1}{2} + \frac{x^2}{2r_v^2} - \log x$$ is strictly decreasing for $x \in (0,r_v)$.
    
    Now fix $v \in S$. Following the proof strategy of Theorem 2.1 in \cite{pritsker-areal}, we claim that $o(\ell_n)$ conjugates of $\alpha_n$ are in $\mathcal{D}_v (0,a)$ for any $a<r_v$. For a contradiction, assume that there exists $c > 0$ such that for any $N \in \N$ we may find $n \geq N$ where $\alpha_{n}$ has at least $c \ell_n$ conjugates in $\mathcal{D}_v (0,a)$. Then for such a choice of $n$, we have
    \begin{align*}
        h_{\rho_\mathbf{r}} (\alpha_n) - h_{\rho_\mathbf{r}} (\infty) \geq & \frac{d_v}{\ell_n} \sum_{\abs{\alpha_{n,k}}_v < r_v} \left (\log r_v - \frac{1}{2} + \frac{\abs{\alpha_{n,k}}_v^2}{2r_v^2} - \log \abs{\alpha_{n,k}}_v \right ) \\
        \geq &  d_v c \left ( \log r_v - \frac{1}{2} + \frac{a^2}{2r_v^2} - \log a \right ) \\ > & 0.
    \end{align*}
    Since the choice of $n$ in the above inequality can be made to be arbitrarily large, this contradicts the assumption that $h_{\rho_\mathbf{r}} (\alpha_n) \to \AZ{\rho_\mathbf{r}}{\lambda_\bt}$ as $n \to \infty$. 

    So $m_n = o(\ell_n)$ conjugates of $\alpha_n$ are in $\mathcal{D}_v (0, a)$ for any $a < r_v$. This implies that 
    \begin{align*}
        \lim_{n\to\infty} \left (\frac{d_v}{\ell_n} \sum_{\abs{\alpha_{n,k}}_v < r_v} -\frac{1}{2} + \frac{\abs{\alpha_{n,k}}_v^2}{2r_v^2} \right ) \geq & \lim_{n\to\infty} \left ( \frac{d_v}{\ell_n} \left ( - \frac{m_n}{2} + \ell_n \left ( - \frac{1}{2} + \frac{a^2}{2r_v^2} \right ) \right ) \right ) \\
        \geq & d_v \left ( -\frac{1}{2} + \frac{a^2}{2r_v^2} \right ).
    \end{align*}
    Letting $a \to r_v$, we see that 
    \begin{gather} \label{equidistribution equation 3}
        \lim_{n\to\infty} \left (\frac{d_v}{\ell_n} \sum_{\abs{\alpha_{n,k}}_v < r_v} -\frac{1}{2} + \frac{\abs{\alpha_{n,k}}_v^2}{2r_v^2} \right ) \geq 0.
    \end{gather}
    But also $$\frac{d_v}{\ell_n} \sum_{\abs{\alpha_{n,k}}_v < r_v} -\frac{1}{2} + \frac{\abs{\alpha_{n,k}}_v^2}{2r_v^2} \leq 0$$ for every $n$, so equality must hold in (\ref{equidistribution equation 3}). 

    We have shown that for any $v \in S$ we must have 
    \begin{gather} \label{equidistribution equation 4}
        \lim_{n\to\infty} \left ( \frac{d_v}{\ell_n} \sum_{\abs{\alpha_{n,k}}_v < r_v} -\frac{1}{2} + \frac{\abs{\alpha_{n,k}}_v^2}{2r_v^2} \right ) = 0. 
    \end{gather}
    Thus using (\ref{equidistribution equation 1}) and (\ref{equidistribution equation 4}) we have
    \begin{align*}
        0 = & \lim_{n\to\infty} ( h_{\rho_\mathbf{r}} (\alpha_n) - h_{\rho_\mathbf{r}} (\infty)) \\ = & \lim_{n\to\infty} \left ( \sum_{v \in M_K \setminus S} \frac{d_v}{\ell_n} \sum_{k=1}^{\ell_n} \log^+ \abs{\alpha_{n,k}}_v + \sum_{v \in S} \frac{d_v}{\ell_n} \sum_{k=1}^{\ell_n} \log \max \{ r_v, \abs{\alpha_{n,k}}_v \} \right ) \\
        = & \lim_{n\to\infty} h_{\lambda_\bt} (\alpha_n). 
    \end{align*}
    Therefore the result holds for $c=1$. 

    Now suppose that $c > 1$. Using Theorem \ref{kronecker when gamma (r) > 1} we have 
    \begin{gather*}
        0 =  \lim_{n\to\infty} (h_{\rho_\mathbf{r}} (\alpha_n ) - \AZ{\rho_\mathbf{r}}{\lambda_\bt}) \geq  \lim_{n\to\infty} c^2 (h_{\rho_\bt} (\alpha_n) - h_{\rho_\mathbf{t}} (\infty)) \geq 0.
    \end{gather*}
    This implies that $\lim_{n\to\infty} h_{\rho_\bt} (\alpha_n) = h_{\rho_\mathbf{t}} (\infty)$. Thus by the result for $c=1$ we have that $h_{\lambda_\bt} (\alpha_n) \to 0$. 

    Finally, whenever $h_{\lambda_\bt} (\alpha_n) \to 0$, by Theorem \ref{frl thm 2} we have that for every $v \in M_K$, $[\alpha_n]_v \overset{*}{\to} \lambda_{\bt , v}$ as $n \to \infty$.
\end{proof}

As a consequence of Theorem \ref{equidistribution for gamma (r) > 1}, we are now able to prove Corollary \ref{peters observation}.

\begin{proof}[Proof of Corollary \ref{peters observation}]
    Suppose that $\mu$ is an adelic measure such that $\mathcal{L} (\mu) = 0$ and without loss of generality assume that $\mu$ is defined over $K$. Write $\mu = (\mu_v)_{v \in M_K}$. 
    
    If $(\alpha_n)_{n=1}^\infty \in \P^1 (\overline{K})^\N$ is a sequence of distinct algebraic numbers such that $h_\mu (\alpha_n) \to 0$, then by Theorem \ref{pst theorem}
    \begin{gather} \label{unique closest measure ineq}
        \AZ{\rho_\mathbf{r}}{\mu} = \lim_{n\to\infty} h_{\rho_\mathbf{r}} (\alpha_n) \geq \mathcal{L}(\rho_\mathbf{r}) = \AZ{\rho_\mathbf{r}}{\lambda_\bt}.
    \end{gather}
    If equality holds in (\ref{unique closest measure ineq}), then by Theorem \ref{equidistribution for gamma (r) > 1}, $[\alpha_n]_v \overset{*}{\to} \lambda_{\bt , v}$ as $n \to \infty$ for every $v \in M_K$. But also $h_\mu (\alpha_n) \to 0$, so for every $v \in M_K$ we have $[\alpha_n]_v \overset{*}{\to} \mu_v$ as $n \to \infty$. Therefore $\mu_v = \lambda_{\bt , v}$ for every $v \in M_K$, which implies $\mu = \lambda_\bt$. 
\end{proof}

\section{Examples} \label{examples section}

\subsection{Lehmer's Conjecture and Areal Weil Heights} \label{lehmers question for areal weil heights} Let $K$ be a number field, $S\subset M_K$ be a nonempty, finite set of places of $K$, and let $\mathbf{r} = (1)_{v \in S}$. We will consider an analog of Lehmer's Conjecture \ref{lehmers conjecture} for areal Weil heights $h_{\rho_\mathbf{r}}$. We first note that there are different ways that one may generalize Lehmer's Conjecture to an adelic height. For instance, suppose that $\mu$ is an adelic measure defined over a number field $K$. One may conjecture the following. 

\begin{conj} \label{lehmer for adelic v1}
    There exists $\varepsilon > 0$ such that $[K(\alpha) : K] h_\mu (\alpha) \geq \varepsilon$ for every $\alpha \in \P^1 (\overline{K})$ with $h_\mu (\alpha) > 0$.
\end{conj}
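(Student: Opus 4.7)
The plan is to split into two regimes based on the essential minimum $\mathcal{L}(\mu)$. When $\mathcal{L}(\mu) > 0$, Conjecture \ref{lehmer for adelic v1} is immediate from Definition \ref{liminf of height}: for any $\delta \in (0, \mathcal{L}(\mu))$ only finitely many $\alpha \in \P^1(\overline{K})$ satisfy $h_\mu(\alpha) < \mathcal{L}(\mu) - \delta$. On the cofinite complement, $[K(\alpha):K] h_\mu(\alpha) \geq \mathcal{L}(\mu)/2$; on the finite exceptional set, each $\alpha$ with $h_\mu(\alpha) > 0$ contributes a strictly positive value $[K(\alpha):K] h_\mu(\alpha)$, and we take $\varepsilon$ to be the minimum of $\mathcal{L}(\mu)/2$ and these finitely many positive values. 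Since Corollary \ref{spectrum of areal height corollary} gives $\mathcal{L}(\rho_\mathbf{r}) > 0$ for every areal adelic measure, Conjecture \ref{lehmer for adelic v1} holds unconditionally whenever $\mu = \rho_\mathbf{r}$ is an areal adelic measure.

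When $\mathcal{L}(\mu) = 0$, however, the conjecture becomes highly nontrivial. Taking $\mu = \lambda$ (the standard adelic measure) gives $h_\lambda = h$, and Conjecture \ref{lehmer for adelic v1} reduces, via $m(\alpha) = [\Q(\alpha):\Q] h(\alpha)$, to the classical Lehmer Conjecture \ref{lehmers conjecture}. So in this regime no current technique can resolve the conjecture in full generality, and the same obstruction surfaces for canonical heights attached to dynamical systems.

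The main obstacle, and the reason Conjecture \ref{lehmer for adelic v1} is arguably not the right analog of Lehmer for areal Weil heights, is that the trivial proof via $\mathcal{L}(\rho_\mathbf{r}) > 0$ leaves no content about the behavior of $h_{\rho_\mathbf{r}}(\alpha) - \mathcal{L}(\rho_\mathbf{r})$ for nontrivial $\alpha$. The natural analog of Lehmer should instead ask whether $[K(\alpha):K](h_{\rho_\mathbf{r}}(\alpha) - \mathcal{L}(\rho_\mathbf{r})) \geq \varepsilon$ whenever the left-hand side is positive. In the model case $K = \Q$, $S = \{\infty\}$, $\mathbf{r} = (1)$, Theorem \ref{areal height formula} gives $h_{\rho_\mathbf{r}}(\alpha) - \tfrac{1}{8} = m_\D(\alpha)/[\Q(\alpha):\Q]$ for an algebraic integer $\alpha$, so the shifted quantity is exactly $m_\D(\alpha)$; Pritsker's Example 1.4 of \cite{pritsker-areal} exhibits algebraic integers $\alpha_n$ with $m_\D(\alpha_n) > 0$ and $m_\D(\alpha_n) \to 0$, which would refute the corrected formulation. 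I expect the forthcoming Proposition \ref{a sequence fails lehmer} to pursue this corrected version rather than Conjecture \ref{lehmer for adelic v1} as literally stated.
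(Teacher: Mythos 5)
The statement is a conjecture that the paper neither proves nor claims to prove; the paper's only remark (immediately following) is that it is trivially true when $\mathcal{L}(\mu) > 0$ and is therefore not the right analog of Lehmer's Conjecture for areal adelic measures. Your write-up reproduces exactly that observation (including the finite-exceptional-set argument via Definition~\ref{liminf of height}, the reduction to classical Lehmer when $\mu = \lambda$, and the anticipated reformulation as Conjecture~\ref{lehmer for adelic v2} together with Proposition~\ref{a sequence fails lehmer}), so it matches the paper's treatment.
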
 

However, Conjecture \ref{lehmer for adelic v1} is trivially true if $\mathcal{L} (\mu) > 0$. Thus this is not the right generalization to consider for $\mu = \rho_\mathbf{r}$. We instead consider the following analog of Lehmer's Conjecture, which accounts for the case where $\mathcal{L} (\mu) > 0$.

\begin{conj} \label{lehmer for adelic v2}
    There exists $\varepsilon > 0$ such that $$[K(\alpha) : K] (h_\mu (\alpha) - \mathcal{L} (\mu )) \geq \varepsilon$$ for every $\alpha \in \P^1 (\overline{K})$ such that $h_\mu (\alpha) > \mathcal{L}(\mu)$.
\end{conj}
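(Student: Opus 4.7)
The statement is a conjecture rather than a theorem, and for a general adelic measure $\mu$ it is as open as classical Lehmer's Conjecture \ref{lehmers conjecture} (essentially the case $\mu = \lambda$, where $\mathcal{L}(\lambda) = 0$). My plan is therefore not to attack it in full generality, but to decide whether it holds for the specific measures $\rho_\mathbf{r}$ with $\mathbf{r} = (1)_{v \in S}$ introduced just above. The paper itself announces that an analog of Lehmer's conjecture fails for these heights, so I would expect to \emph{disprove} Conjecture \ref{lehmer for adelic v2} in this setting rather than prove it.

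Here is the disproof strategy. Since $\gamma(\mathbf{r}) = 1$, Theorem \ref{kronecker when gamma (r) < 1} gives $\mathcal{L}(\rho_\mathbf{r}) = h_{\rho_\mathbf{r}}(\infty) = \sum_{v \in S} d_v/8$, and the equality $h_{\rho_\mathbf{r}}(\alpha) = \mathcal{L}(\rho_\mathbf{r})$ combined with the product formula forces $|\alpha|_w = 1$ at every place $w$, so that only roots of unity attain $\mathcal{L}(\rho_\mathbf{r})$. In the cleanest case $K = \Q$, $S = \{\infty\}$, an algebraic integer $\alpha$ of degree $n$ with minimal polynomial $P_\alpha$ satisfies, via Theorem \ref{areal height formula} and the vanishing $\log^+ |\alpha|_w = 0$ at every finite place,
\[
 [K(\alpha):K] \bigl( h_{\rho_\mathbf{r}}(\alpha) - \mathcal{L}(\rho_\mathbf{r}) \bigr) = m_\D(P_\alpha),
\]
because $m_\D(P) = \sum_k f_1(|\alpha_k|)$ for any monic $P(z) = \prod_k (z-\alpha_k)$ by identity (\ref{expression for areal mahler measure}). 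So disproving the conjecture for this $\mu$ reduces exactly to exhibiting a sequence of algebraic integers $\alpha_n$ (not roots of unity) with $[K(\alpha_n):K] \to \infty$ and $m_\D(\alpha_n) \to 0$. Pritsker's Example 1.4 in \cite{pritsker-areal} supplies precisely such a sequence.

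The main obstacle I would expect is adapting this clean reduction to a general $K$ and $S$. Once $S$ contains more than one place, or contains finite places, one must control the $n_w \log^+ |\alpha_n|_w$ contributions at $w \notin S$ and the $f_1$-contributions at finite $v \in S$ simultaneously. A natural approach is to fix Pritsker's sequence at one archimedean place of $S$ and invoke a simultaneous approximation theorem, in the style of the proof of Proposition \ref{areal heights vs weil height}, to twist the $\alpha_n$ so that $|\alpha_n|_w \leq 1$ for every finite $w \notin S$ and $|\alpha_n|_w = 1$ for every finite $v \in S$. Verifying that the twisted sequence still satisfies $d_n (h_{\rho_\mathbf{r}}(\alpha_n) - \mathcal{L}(\rho_\mathbf{r})) \to 0$, where $d_n = [K(\alpha_n):K]$, would then be the core technical step.
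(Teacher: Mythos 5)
Your framing is right: the statement is a conjecture, not a theorem, and the paper's relevant content is Proposition~\ref{a sequence fails lehmer}, which disproves Conjecture~\ref{lehmer for adelic v2} for $\mu = \rho_\mathbf{r}$ with $\mathbf{r} = (1)_{v \in S}$. Your reduction in the model case $K=\Q$, $S=\{\infty\}$ is correct: one does have $n\bigl(h_{\rho_\mathbf{r}}(\alpha) - \mathcal{L}(\rho_\mathbf{r})\bigr) = m_\D(P_\alpha)$, and so the problem becomes producing algebraic numbers of growing degree with areal Mahler measure tending to zero. However, two things go wrong from there.

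First, you restrict to algebraic integers, but this is both unnecessary and in tension with the citation. The identity $n\bigl(h_{\rho_\mathbf{r}}(\alpha) - \mathcal{L}(\rho_\mathbf{r})\bigr) = m_\D(P_\alpha)$ holds for all $\alpha \in \overline{\Q}^\times$ once one keeps the $\log|a_n|$ term coming from the finite places (which is exactly what Theorem~\ref{areal height formula} packages as $\sum_{w \nmid \infty} n_w \log^+|\alpha|_w$). Meanwhile, a sequence of algebraic \emph{integers} with $m_\D \to 0$ is hard to write down directly: if $\alpha$ is a nonzero algebraic integer with every conjugate strictly inside the unit circle then $\alpha=0$ by Kronecker, so any integer sequence must have some conjugate outside $\D$, and the cancellation in $m_\D$ is subtle. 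Pritsker's Example~1.4 almost certainly uses non-monic polynomials (for instance, $2x^p - 1$ gives $m_\D \to 0$ as $p \to \infty$ by a Taylor expansion of $2^{-2/p}-1$), so as written your invocation of it doesn't type-check with your integer hypothesis. Dropping the monic restriction fixes this at no cost.

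Second, and more seriously, your treatment of general $K$ and $S$ is a sketch that I don't think closes. Simultaneous approximation, as used in Proposition~\ref{areal heights vs weil height}, produces a single element of $K$ with prescribed archimedean and nonarchimedean behavior; it gives no mechanism for ``twisting'' a pre-existing sequence of algebraic numbers $\alpha_n$ lying in varying extensions $K(\alpha_n)/K$ so as to control their valuations at other places, and the valuations of $\alpha_n$ at different places are already locked together by the product formula. The paper's construction sidesteps this entirely: it starts from a single $S'$-unit $\alpha \in K$, furnished by Dirichlet's $S$-unit theorem with $|\alpha|_v < 1$ for $v \in S$ and $|\alpha|_v \geq 1$ for $v \in S' \setminus S$, and takes $p_n$-th roots $\alpha_n$. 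Then $|\alpha_n|_w = |\alpha|_v^{1/p_n}$ coherently at \emph{every} place, the degree $[K(\alpha_n):K] = p_n$, and a Taylor expansion of $|\alpha|_v^{2/p_n} - 1$ in $1/p_n$, together with the product formula cancellation, gives $h_{\rho_\mathbf{r}}(\alpha_n) - h_{\rho_\mathbf{r}}(\infty) = O(1/p_n^2)$. This is self-contained (no appeal to Pritsker's Example~1.4), handles arbitrary $K$ and arbitrary nonempty finite $S$ including finite places, and is the mechanism missing from your proposal.
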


We now show that Conjecture \ref{lehmer for adelic v2} is false for $\mu = \rho_\mathbf{r}$, where $\mathbf{r} = (1)_{v \in S}$. Recall that in this case, $\mathcal{L} (\rho_\mathbf{r}) = h_{\rho_\mathbf{r}} (\infty)$ by Theorem \ref{kronecker when gamma (r) < 1}. 

\begin{prop} \label{a sequence fails lehmer}
    Suppose that $\br = (1)_{v \in S}$, where $S$ is a nonempty set of places of a number field $K$. Then there exists a sequence $(\alpha_n)_{n=1}^\infty \in \P^1 (\overline{K})^\N$ of distinct points such that $$\lim_{n\to\infty} [K(\alpha_n) : K](h_{\rho_\mathbf{r}} (\alpha_n ) - h_{\rho_\mathbf{r}} (\infty)) = 0.$$
\end{prop}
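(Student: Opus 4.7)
The plan is to construct an explicit sequence $\alpha_n = \beta^{1/m_n}$ for a carefully chosen $\beta \in K^\times$ and a strictly increasing sequence $m_n$. This generalizes the basic example for $K=\Q$, $S=\{\infty\}$, $\beta = 1/2$, $\alpha_n = 2^{-1/n}$, where a direct computation gives $m_\D(\alpha_n) = O(1/n)$. The key arithmetic input is the following: there exists $\beta \in K^\times$, not a root of unity, satisfying $|\beta|_v \ge 1$ for every $v \in M_K \setminus S$. When $S$ contains every archimedean place of $K$, any rational integer $N \ge 2$ gives $\beta = 1/N$ (so $|\beta|_v = 1/N < 1$ at each archimedean place and $|\beta|_v \ge 1$ at every non-archimedean place). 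In general one combines a generator of a suitable power of a prime ideal at a finite place of $S$ (using class-group representatives) with a correcting unit from Dirichlet's $S$-unit theorem to handle the archimedean places outside $S$.

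Given such $\beta$, I would pick a strictly increasing sequence of primes $m_n$ for which $\beta$ is not an $m_n$-th power in $K$, so that $z^{m_n}-\beta$ is irreducible over $K$ by the Vahlen-Capelli criterion. Take $\alpha_n$ to be any root; then $[K(\alpha_n):K] = m_n$, the Galois conjugates of $\alpha_n$ over $K$ are $\beta^{1/m_n}\zeta^k$ for $m_n$-th roots of unity $\zeta^k$, and so $|\alpha_n^{(k)}|_v = |\beta|_v^{1/m_n}$ at every place $v$ of $K$, independently of $k$.

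Substituting these absolute values into the formula of Remark \ref{another expression for areal weil heights}, one obtains
\begin{align*}
m_n \bigl(h_{\rho_\mathbf{r}}(\alpha_n) - h_{\rho_\mathbf{r}}(\infty)\bigr) = \sum_{v \in M_K \setminus S} d_v \log^+|\beta|_v + m_n \sum_{v \in S} d_v f_1\bigl(|\beta|_v^{1/m_n}\bigr).
\end{align*}
For each $v$, a short Taylor expansion gives $m_n f_1(|\beta|_v^{1/m_n}) = \log|\beta|_v + O(1/m_n)$ (the error being exactly zero whenever $|\beta|_v \ge 1$). The leading constants combine via the product formula into $\sum_{v \notin S,\, |\beta|_v < 1} d_v(-\log|\beta|_v)$, which vanishes by the choice of $\beta$. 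Hence $m_n(h_{\rho_\mathbf{r}}(\alpha_n) - h_{\rho_\mathbf{r}}(\infty)) = O(1/m_n) \to 0$. The $\alpha_n$ are pairwise distinct because their degrees over $K$ are distinct, and since $\beta$ is not a root of unity neither is $\alpha_n$, so $h_{\rho_\mathbf{r}}(\alpha_n) > h_{\rho_\mathbf{r}}(\infty)$ by Theorem \ref{kronecker when gamma (r) < 1}.

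The main obstacle is the arithmetic existence step for $\beta$: when $S$ mixes archimedean and non-archimedean places of $K$, one must simultaneously ensure $|\beta|_v \ge 1$ at every archimedean $v \notin S$ and at every finite $v \notin S$, and this requires juggling the class-group representatives of the primes in $S^f$ with units of $\mathcal{O}_K$ (or, more flexibly, with $S^f$-units). Once this $\beta$ is produced, the rest of the argument is a routine Taylor expansion combined with the product formula.
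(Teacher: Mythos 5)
Your proposal is correct and follows essentially the same route as the paper. You build the sequence from radicals of a fixed $\beta\in K^\times$ (not a root of unity) satisfying $\abs{\beta}_v\geq 1$ off $S$, take prime-degree roots, observe that all conjugates share the same $v$-adic absolute value, and then Taylor-expand $f_1$ against the product formula; this is exactly the paper's calculation. One small difference: the paper produces the base point $\alpha$ directly as an $S'$-unit where $S'$ adjoins to $S$ all archimedean places and one extra finite place $v_0$, so that $S'\setminus S$ is nonempty and Dirichlet's $S'$-unit theorem immediately hands over an element with $\abs{\alpha}_v<1$ on $S$, $\abs{\alpha}_v\geq 1$ on $S'\setminus S$, and $\abs{\alpha}_v=1$ elsewhere; your paragraph juggling class-group representatives and correcting units is describing a messier version of the same existence step, and you could sharpen it by simply adopting the paper's $S'$-unit construction. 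The paper also derives irreducibility of $x^p-\alpha$ from discreteness of the $S'$-unit lattice rather than citing Vahlen--Capelli by name, but these are equivalent routes to the same statement and your version is fine.
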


\begin{proof}
    Let $S' = \{ v \in M_K : v \in S \text{ or } v \mid \infty \} \cup \{v_0\}$, where $v_0 \in M_K \setminus S$ is finite. Let $U_{K,S'}$ be the $S'$-unit group of $K$, that is $$U_{K,S'} = \{ \alpha \in K : \abs{\alpha}_v = 1 \text{ for all } v \in M_K \setminus S' \}.$$ Dirichlet's $S$-unit theorem implies that we may find $\alpha \in U_{K, S'}$ such that $\abs{\alpha}_v < 1$ for $v \in S$ and $\abs{\alpha}_v \geq 1$ for $v \in S' \setminus S$. Fix such an $\alpha \in K$. 

    Now, to construct the sequence $(\alpha_n)_{n=1}^\infty$, we first note that by the discreteness of the logarithmic embedding of $U_{K,S'}$ into $\R^{S'}$, there exists $\ell \in \N$ such that for all $n \geq \ell$, no root of $x^n - \alpha$ is in $K$. Fix such an $\ell$. Then for all primes $p \geq \ell$, the polynomial $x^p - \alpha$ is irreducible over $K$. So let $\{p_n\}_{n=1}^\infty$ be an increasing sequence of prime numbers with $p_1 \geq \ell$. For each $n \in \N$, let $\alpha_n$ be a root of $x^{p_n} - \alpha$ and let $L_n = K(\alpha_n)$.

    Note that for any $w \in M_{L_n}$ we have $\abs{\alpha_n}_w = \abs{\alpha}_v^{\nicefrac{1}{p_n}}$, where $w \mid v$ and $v \in M_K$. Thus we have $\abs{\alpha_n}_w < 1$ when $w \in M_{L_n}^S$ and$\abs{\alpha_n}_w \geq 1$ when $w \in M_{L_n} \setminus M_{L_n}^S$. We compute
    \begin{align} 
    \begin{split} \label{sequence failing lehmer eq1}
        h_{\rho_\mathbf{r}} (\alpha_n) - h_{\rho_\mathbf{r}} (\infty) = & \sum_{w \in M_{L_n} \setminus M_{L_n}^S} n_w \log^+ \abs{\alpha_n}_w + \sum_{w \in M_{L_n}^S} n_w f_1 (\abs{\alpha_n}_w)\\
        = & \sum_{w \in M_{L_n} \setminus M_{L_n}^S} n_w \log \abs{\alpha_n}_w + \sum_{w \in M_{L_n}^S} n_w \frac{\abs{\alpha_n}_w^2 - 1}{2}.
        \end{split}
    \end{align}
    Using the product formula, we have
    \begin{gather} \label{sequence failing lehmer eq2}
        \sum_{w \in M_{L_n} \setminus M_{L_n}^S} n_w \log \abs{\alpha_n}_w = -\sum_{w \in M_{L_n}^S} n_w \log \abs{\alpha_n}_w.
    \end{gather}
    Combining (\ref{sequence failing lehmer eq1}) and (\ref{sequence failing lehmer eq2}) we obtain
    \begin{align*}
        h_{\rho_\mathbf{r}} (\alpha_n) - h_{\rho_\mathbf{r}} (\infty) = & \sum_{w \in M_{L_n}^S} n_w \left ( \frac{\abs{\alpha_n}_w^2 - 1}{2} - \log \abs{\alpha_n}_w \right ) \\
        = & \sum_{v \in S} d_v \left ( \frac{\abs{\alpha}_v^{\nicefrac{2}{p_n}} - 1}{2} - \log \abs{\alpha}_v^{\nicefrac{1}{p_n}} \right ).
    \end{align*}
    Now, notice that 
    \begin{gather*}
        \abs{\alpha}_v^{\nicefrac{2}{p_n}} = \exp \left ( \frac{2}{p_n} \log \abs{\alpha}_v \right ) = 1 + \frac{2}{p_n} \log \abs{\alpha}_v + O \left ( \frac{1}{p_n^2} \right ).
    \end{gather*}
    Using the above, 
    \begin{align*}
         h_{\rho_\mathbf{r}} (\alpha_n) - h_{\rho_\mathbf{r}} (\infty) = O \left ( \frac{1}{p_n^2} \right ).
    \end{align*}
    Noting that $p_n = [L_n : K]$, we find that $p_n (h_{\rho_\mathbf{r}} (\alpha_n) - h_{\rho_\mathbf{r}} (\infty)) \to 0$ as $n \to \infty$. 
\end{proof}

\begin{rmk}
    An alternative method to construct the sequence $(\alpha_n)_{n=1}^\infty$ is to consider polynomials $x^n - \alpha$ for all $n \in \N$. Observe that $$B = \{\beta \in \overline{K} : \beta^n - \alpha = 0 \text{ for some } n \in \N\}$$ is a set of bounded Weil height, thus has only finitely many elements of bounded degree over $K$. So we may also prove Proposition \ref{a sequence fails lehmer} by letting $(\alpha_n)_{n=1}^\infty$ be an enumeration of $B$.
\end{rmk}

\begin{rmk}
    Let $c > 1$ and consider $\rho = \rho_{c \mathbf{r}}$, where $\mathbf{r} = (1)_{v \in S}$ and where $(\alpha_n)_{n=1}^\infty$ is the sequence from Proposition \ref{a sequence fails lehmer}. By repeating computations similar to those in the proof of Propostion \ref{a sequence fails lehmer}, one may show that $$\lim_{n\to \infty} p_n (h_\rho (\alpha_n) - \mathcal{L}(\rho_\mathbf{r})) = \frac{1-c^2}{c^2} \sum_{v \in S} d_v \log \abs{\alpha}_v = \frac{c^2 - 1}{c^2} h(\alpha) > 0.$$ Thus our method for proving that Conjecture \ref{lehmer for adelic v2} is false when $\mathbf{r} = (1)_{v \in S}$ does not extend to the case with $c\mathbf{r}$, where $c > 1$. We conjecture that Conjecture \ref{lehmer for adelic v2} is false for $\mu = \rho_\mathbf{r}$ with $\gamma (\mathbf{r}) \geq 1$ if and only if $\gamma (\mathbf{r}) = 1$. 
\end{rmk}

\subsection{A Noteworthy Class of Areal Heights} \label{special class of areal}

For the rest of the paper we specialize to the following collection of areal adelic measures. Let $K = \Q$, $S = \{\infty \}$, and let $r_\infty = r >0$. Since $S$ is a singleton, we will abuse notation and write $r$ in place of $\mathbf{r} = (r_\infty)$. We will also write $\abs{\cdot}$ in place of $\abs{\cdot}_\infty$. Our goal is to study $\rho_r$ and its behavior as $r$ varies. Observe that in this setting,
\begin{gather*}
    h_{\rho_r} (\infty) = \frac{1}{8} - \frac{1}{2} \log r.
\end{gather*}
Suppose $\alpha \in \overline{\Q}$ has minimal polynomial $P_\alpha (z) = a_n z^n + \cdots + a_0 \in \Z [z]$ over $\Z$. Then by Proposition 1.3 of \cite{frl.2} we have 
\begin{align*}
    h_{\rho_r} (\alpha ) = & h_{\rho_r} (\infty)  + \frac{1}{n} \sum_{v \in M_\Q} \int_{\sP_v^1} \log \abs{\frac{P(z)}{a_n}}_v d\rho_{r,v} (z) \\
    = & h_{\rho_r} (\infty)  + \frac{1}{n} \sum_{v \in M_K} \int_{\sP_v^1} \log \abs{P(z)}_v d\rho_{r,v} (z) - \frac{1}{n} \sum_{v \in M_K} \log \abs{a_n}_v\\
    = & h_{\rho_r} (\infty) + \frac{1}{n} \int \log \abs{P_\alpha (z)} d\rho_{r,\infty} (z).
\end{align*}
The last line follows from the product formula and the fact that the only nontrivial contribution to the first sum is at $v = \infty$. We observe the special case
$$h_{\rho_1} (\alpha) - h_{\rho_1} (\infty) = \frac{1}{n} m_\D (\alpha),$$ where $m_\D$ denotes the areal Mahler measure.

If $r \leq 1$, then by Theorem \ref{kronecker when gamma (r) < 1} we have 
\begin{gather*}
\mathcal{L}(\rho_r) = \frac{1}{8} - \frac{1}{2}\log r.
\end{gather*}
If $r > 1$, then by Theorem \ref{kronecker when gamma (r) > 1} and Example \ref{height of zeta} we have 
\begin{gather*}
    \mathcal{L} (\rho_r) = \AZ{\rho_r}{\lambda} = \frac{1}{2} \log r - \frac{3}{8} + \frac{1}{2r^2}.
\end{gather*}

\subsection{Uniform Equidistribution in the Complex Plane} \label{uniform equidistribution}

The goal for this section is to determine for which $r$ we have that $\rho_{r,\infty}$ is a limiting distribution for a sequence of complete sets of conjugate algebraic integers. To be precise, we recall the definition of an arithmetic measure, due to Orloski and Sardari.

\begin{Def}[\cite{orloski-sardari}, Definition 1.1]
    Suppose that $\mu$ is a probability measure supported on a compact set $E \subset \C$ which is symmetric about the real line. Then $\mu$ is arithmetic if there exists a sequence of algebraic integers $(\alpha_n)_{n=1}^\infty$ such that the following two conditions hold:
    \begin{enumerate}
        \item For any neighborhood $U \supset E$, there exists $N \in \N$ such that all Galois conjugates of $\alpha_n$ in $\C$ are in $U$ for all $n \geq N$. \label{arithmetic cond 1}
        \item The Galois conjugates of $\alpha_n$ equidistribute to $\mu$, i.e. $[\alpha_n]_\infty \overset{*}{\to} \mu$ as $n\to\infty$. \label{arithmetic cond 2} 
    \end{enumerate}
\end{Def}

We will prove the following theorem.

\begin{thm} \label{when it's arithmetic}
    The measure $\rho_{r,\infty}$ is arithmetic if and only if $r \geq e^{\nicefrac{1}{2}}$.
\end{thm}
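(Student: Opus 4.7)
The necessary direction uses only the integrality of the norm of a nonzero algebraic integer together with lower semicontinuity of $-\log|z|$. Suppose $\rho_{r,\infty}$ is arithmetic, with witnessing sequence $(\alpha_n)_{n=1}^\infty$; after deleting the (at most finitely many) zero terms we may assume each $\alpha_n \neq 0$. Then $\prod_k |\alpha_{n,k}| = |N_{\Q(\alpha_n)/\Q}(\alpha_n)| \geq 1$, so $\tfrac{1}{\ell_n}\sum_k \log|\alpha_{n,k}| \geq 0$, where $\ell_n = \deg \alpha_n$ and $\alpha_{n,1},\dots,\alpha_{n,\ell_n}$ are the Galois conjugates of $\alpha_n$. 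On the other hand, the conjugates of $\alpha_n$ eventually lie in a fixed compact neighborhood $E'$ of $r\overline{\D}$, so $-\log|z|$ restricted to $E'$ is lower semicontinuous and bounded below. The portmanteau theorem for lsc functions, combined with $[\alpha_n]_\infty \overset{*}{\to} \rho_{r,\infty}$ and Proposition \ref{potential function}, gives
\[
\tfrac{1}{2} - \log r \;=\; -\int \log|z|\,d\rho_{r,\infty}(z) \;\leq\; \liminf_{n\to\infty}\Bigl(-\tfrac{1}{\ell_n}\sum_k \log|\alpha_{n,k}|\Bigr) \;\leq\; 0,
\]
which forces $r \geq e^{1/2}$.

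For the sufficient direction, the plan is to invoke a characterization of arithmetic measures in terms of logarithmic potentials: a compactly supported probability measure $\mu$ on $\C$, symmetric about $\R$ and of finite logarithmic energy, is arithmetic provided $p_\mu \geq 0$ on $\C$. By Proposition \ref{potential function}, $p_{\rho_{r,\infty}}$ is continuous, equal to $\log r - \tfrac{1}{2} + |z|^2/(2r^2)$ on $r\overline{\D}$ and to $\log|z|$ outside, so its global infimum is attained at $z=0$ and equals $\log r - \tfrac{1}{2}$. Hence $p_{\rho_{r,\infty}} \geq 0$ on $\C$ precisely when $r \geq e^{1/2}$, at which point the characterization supplies the required sequence of algebraic integers.

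The main obstacle is the sufficient direction, since it relies on the existence of a construction producing a \emph{single} irreducible monic integer polynomial of each growing degree whose roots approximate $\rho_{r,\infty}$. If no characterization theorem is directly quotable, the fallback is to build $\alpha_n$ by hand: start from a product $\prod_k (z^{n_k}-b_k)$ with $b_k \in \Z_{\geq 1}$ and multiplicities $n_k$ calibrated so that the empirical root measure converges to $\rho_{r,\infty}$ (the factor $z^{n_k}-b_k$ has roots uniformly distributed on the circle of radius $b_k^{1/n_k}$, so choosing the radii to sample the marginal $(2s/r^2)\,ds$ on $[0,r]$ suffices), and then apply Hilbert irreducibility or a specialization argument to extract a single irreducible factor of large degree with essentially the same root statistics. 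The condition $r \geq e^{1/2}$ resurfaces here as exactly the integrality obstruction that drives the necessary direction: it is the threshold above which positive integer values $b_k$ can be chosen to cover the required distribution of radii in $[0,r]$ while still satisfying $\prod_k|\alpha_{n,k}|\geq 1$.
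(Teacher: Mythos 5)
Your necessity argument is correct and is a nice elementary alternative to the paper's route: the paper deduces both directions from the Orloski--Sardari criterion \cite[Theorem 1.2]{orloski-sardari} (a measure $\mu$ of the relevant type is arithmetic if and only if $\int\log|Q|\,d\mu\geq 0$ for every nonzero $Q\in\Z[z]$), whereas you extract the obstruction directly from $|N_{\Q(\alpha_n)/\Q}(\alpha_n)|\geq 1$ combined with the portmanteau inequality for the lower-semicontinuous, bounded-below function $-\log|z|$ on a compact neighborhood of $r\overline{\D}$. The conclusion $\tfrac12-\log r\leq 0$ then drops out as you say.

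For sufficiency, the result you hope to invoke is available and it \emph{is} the same Orloski--Sardari theorem: if $p_\mu\geq 0$ on $\C$, then for any nonzero $Q(z)=a_n\prod_k(z-\beta_k)\in\Z[z]$ one has $\int\log|Q|\,d\mu=\log|a_n|+\sum_k p_\mu(\beta_k)\geq 0$ since $|a_n|\geq 1$, so \cite[Theorem 1.2]{orloski-sardari} yields arithmeticity. Your computation $\min_{z\in\C}p_{\rho_{r,\infty}}(z)=f_r(0)=\log r-\tfrac12$ from Proposition \ref{potential function} then closes the ``if'' direction; this is exactly what the paper does, and you should cite that theorem rather than treat it as hypothetical.

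The fallback construction, by contrast, does not work and should be discarded. Factors $z^{n_k}-b_k$ with $b_k\in\Z_{\geq 1}$ have all roots on circles of radius $b_k^{1/n_k}\geq 1$, so every empirical root measure built from them is supported in $\{|z|\geq 1\}$. But $\rho_{r,\infty}$ assigns the open unit disc mass $1/r^2>0$, so no sequence of such measures can converge weak-$*$ to $\rho_{r,\infty}$; you never produce the roots that are supposed to land inside $\D$. Producing roots inside the unit disc while keeping the constant term a nonzero integer --- equivalently, keeping the geometric mean of the root moduli at least $1$ --- is precisely the hard content of the Fekete--Szeg\"o-type construction underlying \cite[Theorem 1.2]{orloski-sardari}, not something that falls out of Hilbert irreducibility applied to a naive product of binomials.
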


\begin{proof}
    By \cite[Theorem 1.2]{orloski-sardari}, it is sufficient to determine for which $r$ we have that for any nonzero integer polynomial $Q(z) \in \Z[z]$, 
    \begin{gather} \label{OS condition}
        \int \log \abs{Q(z)} d\rho_{r,\infty} (z) \geq 0.
    \end{gather}
    Note that if the potential $p_{\rho_{r,\infty}} (z)$ is nonnegative then (\ref{OS condition}) clearly holds. Recall that $p_{\rho_{r,\infty}} (z) = f_r (\abs{z})$, where $f_r$ is given by (\ref{potential function}). By Proposition \ref{f_r properties}, $f_r (x)$ has the global minimum $$f_r (0) = \log r - \frac{1}{2},$$ which is nonnegative precisely when $r \geq e^{\nicefrac{1}{2}}$. On the other hand, if $r < e^{\nicefrac{1}{2}}$, then $$\int \log \abs{z} d\rho_{r,\infty} (z) = p_{\rho_{r,\infty}} (0) < 0.$$ Thus we have that (\ref{OS condition}) holds for all $Q(z) \in \Z [z]$ precisely when $r \geq e^{\nicefrac{1}{2}}$.
\end{proof}

\begin{example} \label{uniform equidistribution example}
Suppose that $r \geq e^{\nicefrac{1}{2}}$ and use Theorem \ref{when it's arithmetic} to find a sequence $(\alpha_n)_{n=1}^\infty$ of algebraic integers satisfying (\ref{arithmetic cond 1}) and (\ref{arithmetic cond 2}). From these conditions, we have 
\begin{gather*}
    \lim_{n\to\infty} \int f(z) d[\alpha_n]_\infty (z) = \int f(z) d_{\rho_{r,\infty}} (z)
\end{gather*}
for any function $f: U \to \R$ which is continuous on a neighborhood of $U$ of $r\D$. 
Note that for any $n$ and for any finite $v$, since $\alpha_n$ is an algebraic integer, $$(\rho_r , [\alpha_n])_v = (\lambda , [\alpha_n])_v = 0.$$ 
So we may compute 
\begin{align*}
    \lim_{n\to \infty} h_{\rho_r} (\alpha_n) = & \lim_{n\to \infty} h_{\rho_r} (\infty) - \sum_{v \in M_K} (\rho_r , [\alpha_n])_v \\
    = & \lim_{n\to\infty} h_{\rho_r} (\infty) + \iint \log \abs{z-w} d\rho_{r,\infty} (z) d[\alpha_n]_\infty (w) \\
    = & \lim_{n\to\infty} h_{\rho_r} (\infty ) + \int p_{\rho_{r,\infty}} (w) d[\alpha_n]_\infty (w) \\
    = & h_{\rho_r} (\infty) - (\rho_r ,\rho_r)_\infty \\
    = & -h_{\rho_r}(\infty) = -\frac{1}{8} + \frac{1}{2} \log r.
\end{align*}    
Observe that 
\begin{gather*}
    -\frac{1}{8} + \frac{1}{2} \log r > \AZ{\rho_r}{\lambda} = \frac{1}{2} \log r - \frac{3}{8} + \frac{1}{2r^2}.
\end{gather*}
\end{example}

\subsection{Computations of Pairings} \label{computation of pairings}

In this section we give explicit computations for certain pairings $\AZ{\rho_r}{\mu}$, where $\mu$ is an adelic measure defined over $\Q$. We begin with the basic case where $\mu = \lambda$.

Note that by Example \ref{height of zeta} we have computed
    \begin{gather*}
        \AZ{\rho_r}{\lambda} = \begin{cases}
            \vspace{2mm} \displaystyle \frac{1}{8} - \frac{1}{2}\log r & \text{if } r\leq 1 \\ 
            \displaystyle \frac{1}{2} \log r - \frac{3}{8} + \frac{1}{2r^2} & \text{if } r>1.
        \end{cases}
    \end{gather*}
    Using calculus one may verify that $\AZ{\rho_r}{\lambda}$ is minimized when $r = \sqrt{2}$, and in this case $$\AZ{\rho_{\sqrt{2}}}{\lambda} = \frac{1}{4}\log 2 - \frac{1}{8} \approx 0.048287.$$

Now we will study $\AZ{\rho_r}{\mu}$, where $\mu$ is the adelic measure associated to the Chebyshev polynomial $T(x) = x^2 - 2$ (see \cite[\textsection 6]{frl.2} for the definition of the adelic measure associated to a rational map). We begin by computing $\AZ{\rho_1}{\mu}$. 

\begin{prop}
    Let $\mu$ be the adelic measure associated to the Chebyshev polynomial $T(x) = x^2 - 2$. Then 
    \begin{gather} \label{areal of radius 1 paired with chebyshev}
    \AZ{\rho_1}{\mu} = \frac{7}{24} - \frac{\sqrt{3}}{2\pi} + \frac{3\sqrt{3}}{4\pi} L(2 , \chi) \approx 0.339068,
\end{gather}
where $L(2,\chi)$ is the Dirichlet $L$-function associated to $\chi$, the nontrivial character modulo 3.
\end{prop}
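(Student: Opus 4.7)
The plan is to pin down $\mu = (\mu_v)_{v \in M_\Q}$ at every place, reduce $\AZ{\rho_1}{\mu}$ to a single archimedean integral, and evaluate the resulting transcendental piece in terms of the Clausen function $\mathrm{Cl}_2(\pi/3)$. At each finite prime $p$, the canonical dynamical measure $\mu_p$ is the equilibrium measure of the filled Julia set of $T$ over $\C_p$; since $\abs{z}_p \le 1$ forces $\abs{T(z)}_p \le \max(\abs{z}_p^2, \abs{2}_p) \le 1$ while $\abs{z}_p > 1$ forces $\abs{T^n(z)}_p \to \infty$, this filled Julia set is $\overline{D_p}(0,1)$, whose equilibrium measure on $\sP_p^1$ is $\delta_{\zeta_{0,1}} = \lambda_p = \rho_{1,p}$. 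At the archimedean place, the Julia set is $[-2,2]$ with equilibrium measure $d\mu_\infty = \frac{dx}{\pi\sqrt{4-x^2}}$, whose logarithmic capacity equals $1$.

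Applying the standard expansion (as in the proof of Proposition \ref{pairing of lambda and rho heights})
\begin{gather*}
\AZ{\rho_1}{\mu} = h_{\rho_1}(\infty) + h_\mu(\infty) - \sum_{v \in M_\Q} d_v\,(\rho_{1,v},\mu_v)_v,
\end{gather*}
the finite-place pairings cancel since $\rho_{1,v} = \mu_v$ there, and $h_\mu(\infty) = \tfrac12(\mu_\infty,\mu_\infty)_\infty = -\tfrac12 \log\mathrm{cap}([-2,2]) = 0$. With $h_{\rho_1}(\infty) = \tfrac18$ and $p_{\rho_{1,\infty}}(w) = f_1(\abs{w})$ from Proposition \ref{potential function}, this reduces the problem to
\begin{gather*}
\AZ{\rho_1}{\mu} = \frac{1}{8} + \int_{-2}^{2} f_1(\abs{w})\,\frac{dw}{\pi\sqrt{4-w^2}}.
\end{gather*}
I would then split the integral at $\abs{w} = 1$ and substitute $w = 2\sin\theta$. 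The quadratic piece (from $\abs{w} \le 1$) reduces to $\tfrac{1}{\pi}\int_{-\pi/6}^{\pi/6}\bigl(\tfrac12 - \cos 2\theta\bigr) d\theta = \tfrac16 - \tfrac{\sqrt3}{2\pi}$, while the logarithmic piece (from $\abs{w}\ge 1$) becomes $\tfrac{2}{\pi}\int_{\pi/6}^{\pi/2}\log(2\sin\theta)\,d\theta$.

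The remaining integral $\int_{\pi/6}^{\pi/2}\log\sin\theta\,d\theta$ evaluates, via the standard identity $\int_0^{\alpha}\log\sin t\,dt = -\alpha \log 2 - \tfrac12\mathrm{Cl}_2(2\alpha)$, to $-\tfrac{\pi \log 2}{3} + \tfrac12\mathrm{Cl}_2(\pi/3)$, producing the clean simplification $\tfrac{2}{\pi}\int_{\pi/6}^{\pi/2}\log(2\sin\theta)\,d\theta = \tfrac{1}{\pi}\mathrm{Cl}_2(\pi/3)$. The principal remaining step, which I expect to be the most delicate point, is the identity $\mathrm{Cl}_2(\pi/3) = \tfrac{3\sqrt{3}}{4}L(2,\chi)$. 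I would prove this by grouping the series $\mathrm{Cl}_2(\pi/3) = \tfrac{\sqrt{3}}{2}\sum_n \epsilon_n/n^2$ (with $\epsilon_n \in \{\pm 1, 0\}$ depending on $n \bmod 6$) according to residue classes modulo $6$: letting $A,B,C,D$ denote the subsums over $n\equiv 1,2,4,5\pmod 6$, one has $\tfrac{2}{\sqrt3}\mathrm{Cl}_2(\pi/3) = A+B-C-D$ and $L(2,\chi) = A+C-B-D$, and the elementary relation $B - C = \tfrac14 L(2,\chi)$ (obtained by factoring $\tfrac14$ out of the even-residue subsums and recognizing the result as $L(2,\chi)$) yields $\tfrac{2}{\sqrt3}\mathrm{Cl}_2(\pi/3) - L(2,\chi) = \tfrac12 L(2,\chi)$. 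Collecting all pieces gives $\AZ{\rho_1}{\mu} = \tfrac{1}{8} + \tfrac16 - \tfrac{\sqrt3}{2\pi} + \tfrac{3\sqrt3}{4\pi}L(2,\chi) = \tfrac{7}{24} - \tfrac{\sqrt3}{2\pi} + \tfrac{3\sqrt3}{4\pi}L(2,\chi)$, as claimed.
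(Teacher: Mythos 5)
Your proposal is correct and matches the paper's reduction through the archimedean integral: identify $\mu_v = \lambda_v = \rho_{1,v}$ at finite places via good reduction (you go further and argue via the nonarchimedean filled Julia set, which is a nice self-contained justification), observe $h_\mu(\infty)=0$ since $\mathrm{cap}([-2,2])=1$, and then reduce $\AZ{\rho_1}{\mu}$ to $\tfrac18 + \int_{-2}^2 f_1(|w|)\,\tfrac{dw}{\pi\sqrt{4-w^2}}$ split at $|w|=1$. Where you genuinely diverge is in the logarithmic piece: the paper recognizes $\int_{-2}^2 \tfrac{\log^+|x|\,dx}{\pi\sqrt{4-x^2}}$ as $\AZ{\mu}{\lambda}$ (via a proposition of Petsche--Szpiro--Tucker / Fili) and then simply cites Bridy--Larson's Proposition 4.1 for the closed form $\tfrac{3\sqrt3}{4\pi}L(2,\chi)$. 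You instead evaluate it from scratch: the substitution $w=2\sin\theta$ produces $\tfrac{1}{\pi}\mathrm{Cl}_2(\pi/3)$, and you then prove the identity $\mathrm{Cl}_2(\pi/3)=\tfrac{3\sqrt3}{4}L(2,\chi)$ by grouping the Fourier series modulo $6$ and matching against $L(2,\chi)$; your series manipulation $\tfrac{2}{\sqrt3}\mathrm{Cl}_2(\pi/3)-L(2,\chi)=2(B-C)=\tfrac12 L(2,\chi)$ is correct. Your route is more self-contained and elementary, at the cost of redoing a computation already available in the literature; the paper's route is shorter and highlights the conceptual identity $\int \log^+|x|\,d\mu_\infty = \AZ{\mu}{\lambda} = m(1+x+y)$, which the authors note in a subsequent remark.
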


\begin{proof}
Note that $T$ has good reduction everywhere, so $\mu_{v} = \lambda_v$ for every finite $v$. Also, $\mu_\infty$ is the equilibrium measure on $[-2,2]$, given by
\begin{gather*}
    d\mu_\infty = \frac{dx}{\pi \sqrt{4-x^2}}.
\end{gather*}

Now, since $\rho_{1,v} = \lambda_v = \mu_v$ at all finite $v$, we have 
\begin{align*}
    \AZ{\rho_1}{\mu} = & \sum_{v \in M_\Q} \frac{1}{2} (\rho_1 , \rho_1)_v - (\rho_1 , \mu)_v + \frac{1}{2} (\mu , \mu)_v \\
    = & h_{\rho_1} (\infty) - (\rho_1 , \mu)_\infty + \frac{1}{2} (\mu, \mu)_\infty.
\end{align*}
Since $\mu_\infty$ is the equilibrium measure of a set of capacity 1, this becomes
\begin{gather} \label{pairing of rho_r and mu}
    \AZ{\rho_1}{\mu} = h_{\rho_1} (\infty) - (\rho_1 , \mu)_\infty.
\end{gather}
By Theorem \ref{areal height formula} we know that $h_{\rho_1} = \nicefrac{1}{8}$, so we just need to compute
\begin{align} \label{local pairing of areal and chebyshev at infinity}
\begin{split}
    -(\rho_1, \mu)_\infty = & \iint \log \abs{z-w} d\rho_1 (w) d\mu (z) \\
   = & \int_{-2}^2 \frac{f_r (x) dx}{\pi \sqrt{4-x^2}} \\
   = & \int_{-2}^2 \frac{\log^+ \abs{x} dx}{\pi \sqrt{4-x^2}} + \int_{-1}^1 \frac{x^2-1}{2\pi \sqrt{4-x^2}} dx.
\end{split}
\end{align}
Note that by \cite[Proposition 16]{PST}, 
\begin{align*}
    \AZ{\mu}{\lambda} = & h_\mu (\infty) + \int \log^+ \abs{z} d\mu_\infty (z) \\
    = & \int_{-2}^2 \frac{\log^+ \abs{x} dx}{\pi \sqrt{4-x^2}}.
\end{align*}
But by \cite[Proposition 4.1]{bridy-larson}, 
\begin{gather*}
    \AZ{\mu}{\lambda} = \frac{3\sqrt{3}}{4\pi} L(2,\chi).
\end{gather*}
Thus (\ref{local pairing of areal and chebyshev at infinity}) reduces to
\begin{align} \label{local pairing of areal and chebyshev at infinity part 3}
    \begin{split}
            -(\rho_r, \mu)_\infty = & \frac{3\sqrt{3}}{4\pi} L(2,\chi) + \int_{-1}^1 \frac{x^2-1}{2\pi \sqrt{4-x^2}} dx \\
            = &  \frac{3\sqrt{3}}{4\pi} L(2,\chi) + \frac{1}{6} - \frac{\sqrt{3}}{2\pi}.
    \end{split}
\end{align}

Combining (\ref{pairing of rho_r and mu}) and (\ref{local pairing of areal and chebyshev at infinity part 3}), we obtain (\ref{areal of radius 1 paired with chebyshev}). 
\end{proof}

\begin{rmk}
    Note that $$\AZ{\mu}{\lambda} = \frac{3\sqrt{3}}{4\pi} L(2,\chi) = m(1+x+y),$$ where the latter equality was computed in \cite{smyth_on_mahler_measure}. For the areal Weil height, one should compare (\ref{areal of radius 1 paired with chebyshev}) with $$m_\D (1+x+y) = \frac{1}{6} - \frac{11\sqrt{3}}{16\pi} + \frac{3\sqrt{3}}{4\pi} L(2,\chi),$$ which was computed in \cite{lalin-roy-1}.
\end{rmk}

Now we will find the radius $r$ which minimizes $\AZ{\rho_r}{\mu}$. 

\begin{prop} \label{minimize pairing prop}
    Let $\mu$ be the adelic measure associated to the Chebyshev polynomial $T(x) = x^2 - 2$. Then the pairing $\AZ{\rho_r}{\mu}$ for $r \in (0,\infty)$ is minimized at $r=2$, at which
    \begin{gather*}
        \AZ{\rho_{\sqrt{2}}}{\mu} = \frac{1}{2} \log 2 - \frac{1}{8} \approx 0.221574.
    \end{gather*}
\end{prop}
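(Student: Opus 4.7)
The plan is to reduce $g(r) := \AZ{\rho_r}{\mu}$ to a one-variable calculus problem by using that $\mu$ has good reduction everywhere. Exactly as in the proof of the preceding proposition, $\mu_v = \lambda_v = \rho_{r,v}$ at every finite $v \in M_\Q$, so the local contributions to the Arakelov-Zhang pairing vanish off the archimedean place. Combining this with Proposition \ref{potential function}, (\ref{arakelov-zhang pairing def}), and $(\mu,\mu)_\infty = 0$ (since $\mu_\infty$ is the equilibrium measure of $[-2,2]$, a set of capacity $1$), I obtain
\begin{gather*}
    g(r) = h_{\rho_r}(\infty) - (\rho_r, \mu)_\infty = \frac{1}{8} - \frac{1}{2}\log r + \int_{-2}^{2} \frac{f_r(|x|)}{\pi\sqrt{4-x^2}}\,dx.
\end{gather*}

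Next, I differentiate $g$ with respect to $r$. Observing that $\partial_r f_r(x) = \tfrac{1}{r} - \tfrac{x^2}{r^3}$ for $x<r$ and $\partial_r f_r(x) = 0$ for $x>r$, and that $f_r$ is continuous at $x=r$ (so the boundary terms from splitting the integral at $|x|=r$ cancel), I get
\begin{gather*}
    g'(r) = -\frac{1}{2r} + \int_{|x|<\min(r,2)} \left(\frac{1}{r} - \frac{x^2}{r^3}\right) \frac{dx}{\pi\sqrt{4-x^2}}.
\end{gather*}
For $r \geq 2$, the integration region is all of $[-2,2]$, and the standard substitution $x = 2\sin\theta$ gives $\int_{-2}^{2} \tfrac{dx}{\pi\sqrt{4-x^2}} = 1$ and $\int_{-2}^{2} \tfrac{x^2\,dx}{\pi\sqrt{4-x^2}} = 2$. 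Hence $g'(r) = \tfrac{r^2-4}{2r^3}$, which is strictly positive on $(2,\infty)$ and vanishes at $r=2$.

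For $r \in (0,2)$, the remaining task is to show $g'(r) < 0$, equivalently
\begin{gather*}
    \int_{-r}^{r} \frac{r^2 - x^2}{\pi\sqrt{4-x^2}}\,dx < \frac{r^2}{2}.
\end{gather*}
This is the main estimate. The key observation is that on $[-r, r]$ with $r < 2$, one has $\sqrt{4-x^2} > \sqrt{r^2-x^2}$, so $\tfrac{r^2-x^2}{\sqrt{4-x^2}} < \sqrt{r^2-x^2}$, and integrating yields the right-hand side $\tfrac{1}{\pi}\cdot \tfrac{\pi r^2}{2} = \tfrac{r^2}{2}$ (the area of a semicircle of radius $r$). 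Thus $g$ is strictly decreasing on $(0,2]$ and strictly increasing on $[2,\infty)$, so $r=2$ is the unique minimizer. A direct substitution then gives $g(2) = -\tfrac{3}{8} + \tfrac{1}{2}\log 2 + \tfrac{1}{4} = \tfrac{1}{2}\log 2 - \tfrac{1}{8}$.

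The only subtle step is justifying differentiation under the integral for the piecewise-smooth integrand; this can be avoided entirely by first splitting $\int_{-2}^2 = \int_{|x|<r} + \int_{r \leq |x| \leq 2}$ for $r \in (0,2)$ (and using $f_r(|x|)$ on $[-2,2]$ with a single formula for $r \geq 2$), differentiating each piece by the Leibniz rule, and observing that the boundary terms at $|x|=r$ cancel because both branches of $f_r$ agree at $r$.
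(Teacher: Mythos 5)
Your reduction to the single archimedean integral and the form of $g'(r)$ agree with the paper, and your Leibniz-rule justification parallels theirs. Where you genuinely depart is in handling the sign of $g'(r)$ on $(0,2)$. The paper evaluates the integral in closed form, obtaining
\[
g'(r) = -\frac{1}{2r} + \frac{2}{\pi r}\sin^{-1}\!\left(\frac{r}{2}\right) + \frac{1}{\pi r^2}\sqrt{4-r^2} - \frac{4}{\pi r^3}\sin^{-1}\!\left(\frac{r}{2}\right),
\]
and then asserts negativity for $r<2$ (which in their write-up is left as a calculus exercise and is not entirely transparent). You instead sidestep the explicit arcsine computation by the pointwise bound $\sqrt{4-x^2} > \sqrt{r^2-x^2}$ on $(-r,r)$, so that
\[
\frac{1}{\pi}\int_{-r}^{r}\frac{r^2-x^2}{\sqrt{4-x^2}}\,dx \;<\; \frac{1}{\pi}\int_{-r}^{r}\sqrt{r^2-x^2}\,dx \;=\; \frac{r^2}{2},
\]
which is exactly what is needed to force $g'(r)<0$ for $r\in(0,2)$. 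This semicircle-area comparison is more elementary and makes the sign of the derivative transparent at a glance; the paper's route gives an exact formula for $g'$ as a bonus, but requires an extra (unwritten) monotonicity argument to read off its sign. The $r\geq 2$ branch and the evaluation $g(2) = \tfrac{1}{2}\log 2 - \tfrac{1}{8}$ are identical to the paper. Your argument is correct and, for the $r<2$ case, cleaner than the one in the paper.
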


\begin{proof}
Our strategy is to analyze $$\frac{d}{dr} \AZ{\rho_r}{\mu} = -\frac{1}{2r} + \frac{d}{dr} \int_{-2}^2 \frac{f_r (x)}{\pi \sqrt{4-x^2}} dx$$ by applying the Leibniz integral rule (e.g. \cite[Theorem 2.27]{folland}). Let $0 < \varepsilon < 2$ and define the function $$F_r (x) = \frac{f_r (x)}{\pi \sqrt{4-x^2}}.$$ Observe that for any $r \in (0,\infty)$, $F_r (x)$ is integrable on $(-2,2)$ and its partial derivative $$\frac{\partial}{\partial r} F_r (x) = \begin{cases}
    0 & \text{if } r \leq x,\\
    \displaystyle \frac{r^2 - x^2}{r^3 \pi \sqrt{4-x^2}} & \text{if } r \geq x
\end{cases}$$ exists for every $x \in (-2,2)$. Furthermore, for $r \geq \varepsilon$ we have
\begin{gather*}
    0 \leq \frac{\partial}{\partial r} F_r (x) \leq \left ( \frac{1}{\varepsilon} + \frac{4}{\varepsilon^3} \right ) \frac{1}{r^3 \pi \sqrt{4-x^2}}
\end{gather*}
for all $x \in (-2,2)$. Since the right-hand side of the above inequality is integrable, by the Leibniz integral rule we have
\begin{align} \label{leibniz rule}
    \begin{split}
    \frac{d}{dr} \AZ{\rho_r}{\mu} = & -\frac{1}{2r} + \int_{-2}^2 \frac{d}{dr} F_r (x) dx \\
    = & -\frac{1}{2r} + \int_{-\min \{2,r\}}^{\min \{2,r\}}  \frac{1}{\pi \sqrt{4-x^2}} \left ( \frac{1}{r} - \frac{x^2}{r^3} \right ) dx.
    \end{split}
\end{align}
for every $r \in (\varepsilon, \infty)$. By letting $\varepsilon$ be arbitrarily small, we see this holds for every $r \in (0,\infty)$. If $r \leq 2$, (\ref{leibniz rule}) becomes
\begin{gather*}
    \frac{d}{dr} \AZ{\rho_r}{\mu} = -\frac{1}{2r} + \frac{2}{\pi r} \sin^{-1} \left ( \frac{r}{2} \right ) + \frac{1}{\pi r^2} \sqrt{4-r^2} - \frac{4}{\pi r^3} \sin^{-1} \left ( \frac{r}{2} \right ),
\end{gather*}
which is negative when $r<2$ and zero at $r=2$. If $r > 2$ then (\ref{leibniz rule}) evaluates to
\begin{gather*}
     \frac{d}{dr} \AZ{\rho_r}{\mu} = \frac{r^2-4}{2r^3},
\end{gather*}
which is positive. Thus by calculus $\AZ{\rho_r}{\mu}$ is minimized at $r = 2$, at which
\begin{align*}
    \AZ{\rho_2}{\mu} = & \frac{1}{8} - \frac{1}{2}\log 2 + \int_{-2}^2 \frac{1}{\pi \sqrt{4-x^2}} \left ( \log 2 - \frac{1}{2} + \frac{x^2}{8} \right ) dx \\
    = & \frac{1}{2}\log 2 - \frac{1}{8}. \qedhere
\end{align*}
\end{proof}

\begin{rmk}
    An interesting identity is $\AZ{\rho_2}{\mu} = - h_{\rho_2} (\infty)$, which also happens to be the quantity computed in Example \ref{uniform equidistribution example}. 
\end{rmk}

\bibliographystyle{amsalpha}
\bibliography{references}

@article {pritsker-areal,
    AUTHOR = {Pritsker, Igor E.},
     TITLE = {An areal analog of {M}ahler's measure},
   JOURNAL = {Illinois J. Math.},
  FJOURNAL = {Illinois Journal of Mathematics},
    VOLUME = {52},
      YEAR = {2008},
    NUMBER = {2},
     PAGES = {347--363},
      ISSN = {0019-2082,1945-6581},
   MRCLASS = {11R06 (11C08 11G50 30C10)},
  MRNUMBER = {2524641},
MRREVIEWER = {Lenny\ Fukshansky},
       URL = {http://projecteuclid.org/euclid.ijm/1248355339},
}

@book {bombieri-gubler,
    AUTHOR = {Bombieri, Enrico and Gubler, Walter},
     TITLE = {Heights in {D}iophantine geometry},
    SERIES = {New Mathematical Monographs},
    VOLUME = {4},
 PUBLISHER = {Cambridge University Press, Cambridge},
      YEAR = {2006},
     PAGES = {xvi+652},
      ISBN = {978-0-521-84615-8; 0-521-84615-3},
   MRCLASS = {11G50 (11-02 11G10 11G30 11J68 14G40)},
  MRNUMBER = {2216774},
MRREVIEWER = {Yuri\ Bilu},
       DOI = {10.1017/CBO9780511542879},
       URL = {https://doi.org/10.1017/CBO9780511542879},
}

@book {baker-rumely.book,
    AUTHOR = {Baker, Matthew and Rumely, Robert},
     TITLE = {Potential theory and dynamics on the {B}erkovich projective
              line},
    SERIES = {Mathematical Surveys and Monographs},
    VOLUME = {159},
 PUBLISHER = {American Mathematical Society, Providence, RI},
      YEAR = {2010},
     PAGES = {xxxiv+428},
      ISBN = {978-0-8218-4924-8},
   MRCLASS = {37P50 (14G20 14G22 31C15 31C45 37P40)},
  MRNUMBER = {2599526},
MRREVIEWER = {Charles\ Favre},
       DOI = {10.1090/surv/159},
       URL = {https://doi.org/10.1090/surv/159},
}

@article {frl.2,
    AUTHOR = {Favre, Charles and Rivera-Letelier, Juan},
     TITLE = {\'Equidistribution quantitative des points de petite hauteur
              sur la droite projective},
   JOURNAL = {Math. Ann.},
  FJOURNAL = {Mathematische Annalen},
    VOLUME = {335},
      YEAR = {2006},
    NUMBER = {2},
     PAGES = {311--361},
      ISSN = {0025-5831,1432-1807},
   MRCLASS = {11G50 (37F10)},
  MRNUMBER = {2221116},
MRREVIEWER = {Matthew\ H.\ Baker},
       DOI = {10.1007/s00208-006-0751-x},
       URL = {https://doi.org/10.1007/s00208-006-0751-x},
}

@article{lehmer,
 ISSN = {0003486X, 19398980},
 URL = {http://www.jstor.org/stable/1968172},
 author = {D. H. Lehmer},
 journal = {Annals of Mathematics},
 number = {3},
 pages = {461--479},
 publisher = {[Annals of Mathematics, Trustees of Princeton University on Behalf of the Annals of Mathematics, Mathematics Department, Princeton University]},
 title = {Factorization of Certain Cyclotomic Functions},
 urldate = {2025-03-25},
 volume = {34},
 year = {1933}
}

@article {kronecker,
    AUTHOR = {Kronecker, L.},
     TITLE = {Zwei {S}\"atze \"uber {G}leichungen mit ganzzahligen
              {C}oefficienten},
   JOURNAL = {J. Reine Angew. Math.},
  FJOURNAL = {Journal f\"ur die Reine und Angewandte Mathematik. [Crelle's
              Journal]},
    VOLUME = {53},
      YEAR = {1857},
     PAGES = {173--175},
      ISSN = {0075-4102,1435-5345},
   MRCLASS = {99-04},
  MRNUMBER = {1578994},
       DOI = {10.1515/crll.1857.53.173},
       URL = {https://doi.org/10.1515/crll.1857.53.173},
}

@incollection {smyth-survey,
    AUTHOR = {Smyth, Chris},
     TITLE = {The {M}ahler measure of algebraic numbers: a survey},
 BOOKTITLE = {Number theory and polynomials},
    SERIES = {London Math. Soc. Lecture Note Ser.},
    VOLUME = {352},
     PAGES = {322--349},
 PUBLISHER = {Cambridge Univ. Press, Cambridge},
      YEAR = {2008},
      ISBN = {978-0-521-71467-9},
   MRCLASS = {11R06},
  MRNUMBER = {2428530},
       DOI = {10.1017/CBO9780511721274.021},
       URL = {https://doi.org/10.1017/CBO9780511721274.021},
}

@book {around-the-unit-circle,
    AUTHOR = {McKee, James and Smyth, Chris},
     TITLE = {Around the unit circle---{M}ahler measure, integer matrices
              and roots of unity},
    SERIES = {Universitext},
 PUBLISHER = {Springer, Cham},
      YEAR = {[2021] \copyright 2021},
     PAGES = {xx+438},
      ISBN = {978-3-030-80030-7; 978-3-030-80031-4},
   MRCLASS = {11R06 (05A05 05C20 05C22 11-02 11C20 15B36)},
  MRNUMBER = {4393584},
MRREVIEWER = {Art\=uras\ Dubickas},
       DOI = {10.1007/978-3-030-80031-4},
       URL = {https://doi.org/10.1007/978-3-030-80031-4},
}

@article {bilu,
    AUTHOR = {Bilu, Yuri},
     TITLE = {Limit distribution of small points on algebraic tori},
   JOURNAL = {Duke Math. J.},
  FJOURNAL = {Duke Mathematical Journal},
    VOLUME = {89},
      YEAR = {1997},
    NUMBER = {3},
     PAGES = {465--476},
      ISSN = {0012-7094,1547-7398},
   MRCLASS = {11G35 (11G25 11J68 14G05 14G25)},
  MRNUMBER = {1470340},
MRREVIEWER = {Dan\ Abramovich},
       DOI = {10.1215/S0012-7094-97-08921-3},
       URL = {https://doi.org/10.1215/S0012-7094-97-08921-3},
}

@misc{fili_ui,
      title={A metric of mutual energy and unlikely intersections for dynamical systems}, 
      author={Paul Fili},
      year={2017},
      eprint={1708.08403},
      archivePrefix={arXiv},
      primaryClass={math.NT},
      url={https://arxiv.org/abs/1708.08403}, 
}

@misc{orloski-sardari,
      title={Limiting distributions of conjugate algebraic integers}, 
      author={Bryce Joseph Orloski and Naser Talebizadeh Sardari},
      year={2024},
      eprint={2302.02872},
      archivePrefix={arXiv},
      primaryClass={math.NT},
      url={https://arxiv.org/abs/2302.02872}, 
}

@article {choi-samuels,
    AUTHOR = {Choi, Kwok-Kwong Stephen and Samuels, Charles L.},
     TITLE = {Two inequalities on the areal {M}ahler measure},
   JOURNAL = {Illinois J. Math.},
  FJOURNAL = {Illinois Journal of Mathematics},
    VOLUME = {56},
      YEAR = {2012},
    NUMBER = {3},
     PAGES = {825--834},
      ISSN = {0019-2082,1945-6581},
   MRCLASS = {11C08 (11G50)},
  MRNUMBER = {3161353},
MRREVIEWER = {Rupam\ Barman},
       URL = {http://projecteuclid.org/euclid.ijm/1391178550},
}

@article {flammang,
    AUTHOR = {Flammang, V.},
     TITLE = {The {M}ahler measure and its areal analog for totally positive
              algebraic integers},
   JOURNAL = {J. Number Theory},
  FJOURNAL = {Journal of Number Theory},
    VOLUME = {151},
      YEAR = {2015},
     PAGES = {211--222},
      ISSN = {0022-314X,1096-1658},
   MRCLASS = {11R06},
  MRNUMBER = {3314210},
MRREVIEWER = {Detchat\ Samart},
       DOI = {10.1016/j.jnt.2014.12.023},
       URL = {https://doi.org/10.1016/j.jnt.2014.12.023},
}

@article {lalin-roy-1,
    AUTHOR = {Lalin, Matilde N. and Roy, Subham},
     TITLE = {Evaluations of the areal {M}ahler measure of multivariable
              polynomials},
   JOURNAL = {J. Number Theory},
  FJOURNAL = {Journal of Number Theory},
    VOLUME = {254},
      YEAR = {2024},
     PAGES = {103--145},
      ISSN = {0022-314X,1096-1658},
   MRCLASS = {11R06 (11M06 11R42 30C10)},
  MRNUMBER = {4636754},
MRREVIEWER = {James\ McKee},
       DOI = {10.1016/j.jnt.2023.07.006},
       URL = {https://doi.org/10.1016/j.jnt.2023.07.006},
}

@article {lalin-roy-2,
    AUTHOR = {Lalin, Matilde N. and Roy, Subham},
     TITLE = {The areal {M}ahler measure under a power change of variables},
   JOURNAL = {Illinois J. Math.},
  FJOURNAL = {Illinois Journal of Mathematics},
    VOLUME = {68},
      YEAR = {2024},
    NUMBER = {2},
     PAGES = {309--330},
      ISSN = {0019-2082,1945-6581},
   MRCLASS = {11R06 (11M06 11R42 32A99)},
  MRNUMBER = {4760277},
       DOI = {10.1215/00192082-11161282},
       URL = {https://doi.org/10.1215/00192082-11161282},
}

@book {benedetto,
    AUTHOR = {Benedetto, Robert L.},
     TITLE = {Dynamics in one non-archimedean variable},
    SERIES = {Graduate Studies in Mathematics},
    VOLUME = {198},
 PUBLISHER = {American Mathematical Society, Providence, RI},
      YEAR = {2019},
     PAGES = {xviii+463},
      ISBN = {978-1-4704-4688-8},
   MRCLASS = {37-01 (11S82 37Pxx)},
  MRNUMBER = {3890051},
MRREVIEWER = {Michael\ Louis\ Tepper},
       DOI = {10.1090/gsm/198},
       URL = {https://doi.org/10.1090/gsm/198},
}

@book {berkovich,
    AUTHOR = {Berkovich, Vladimir G.},
     TITLE = {Spectral theory and analytic geometry over non-{A}rchimedean
              fields},
    SERIES = {Mathematical Surveys and Monographs},
    VOLUME = {33},
 PUBLISHER = {American Mathematical Society, Providence, RI},
      YEAR = {1990},
     PAGES = {x+169},
      ISBN = {0-8218-1534-2},
   MRCLASS = {32P05 (32C15 32C37 46S10 47S10)},
  MRNUMBER = {1070709},
MRREVIEWER = {W.\ Bartenwerfer},
       DOI = {10.1090/surv/033},
       URL = {https://doi.org/10.1090/surv/033},
}

@article {PST,
    AUTHOR = {Petsche, Clayton and Szpiro, Lucien and Tucker, Thomas J.},
     TITLE = {A dynamical pairing between two rational maps},
   JOURNAL = {Trans. Amer. Math. Soc.},
  FJOURNAL = {Transactions of the American Mathematical Society},
    VOLUME = {364},
      YEAR = {2012},
    NUMBER = {4},
     PAGES = {1687--1710},
      ISSN = {0002-9947,1088-6850},
   MRCLASS = {37P30 (11G50 14G40 37P05)},
  MRNUMBER = {2869188},
MRREVIEWER = {Kevin\ Doerksen},
       DOI = {10.1090/S0002-9947-2011-05350-X},
       URL = {https://doi-org.argo.library.okstate.edu/10.1090/S0002-9947-2011-05350-X},
}

@article{lalin-nair-ringeling-roy,
    author = {Lalín, Matilde N and Nair, Siva Sankar and Ringeling, Berend and Roy, Subham},
    title = {Random Walks Through the Areal Mahler Measure: Steps in the Complex Plane},
    journal = {The Quarterly Journal of Mathematics},
    pages = {haaf043},
    year = {2025},
    month = {11},
    issn = {0033-5606},
    doi = {10.1093/qmath/haaf043},
    url = {https://doi.org/10.1093/qmath/haaf043},
    eprint = {https://academic.oup.com/qjmath/advance-article-pdf/doi/10.1093/qmath/haaf043/65403933/haaf043.pdf},
}

@book {cassels,
    AUTHOR = {Cassels, J. W. S.},
     TITLE = {Local fields},
    SERIES = {London Mathematical Society Student Texts},
    VOLUME = {3},
 PUBLISHER = {Cambridge University Press, Cambridge},
      YEAR = {1986},
     PAGES = {xiv+360},
      ISBN = {0-521-30484-9; 0-521-31525-5},
   MRCLASS = {11Sxx (11D88 11E95 11F85 14G20)},
  MRNUMBER = {861410},
MRREVIEWER = {R.\ A.\ Mollin},
       DOI = {10.1017/CBO9781139171885},
       URL = {https://doi-org.argo.library.okstate.edu/10.1017/CBO9781139171885},
}

@article {bridy-larson,
    AUTHOR = {Bridy, Andrew and Larson, Matt},
     TITLE = {The {A}rakelov-{Z}hang pairing and {J}ulia sets},
   JOURNAL = {Proc. Amer. Math. Soc.},
  FJOURNAL = {Proceedings of the American Mathematical Society},
    VOLUME = {149},
      YEAR = {2021},
    NUMBER = {9},
     PAGES = {3699--3713},
      ISSN = {0002-9939,1088-6826},
   MRCLASS = {37P15 (11G50 14G40)},
  MRNUMBER = {4291571},
MRREVIEWER = {Hongming\ Nie},
       DOI = {10.1090/proc/15518},
       URL = {https://doi.org/10.1090/proc/15518},
}

@article {smyth_on_mahler_measure,
    AUTHOR = {Smyth, C. J.},
     TITLE = {On measures of polynomials in several variables},
   JOURNAL = {Bull. Austral. Math. Soc.},
  FJOURNAL = {Bulletin of the Australian Mathematical Society},
    VOLUME = {23},
      YEAR = {1981},
    NUMBER = {1},
     PAGES = {49--63},
      ISSN = {0004-9727},
   MRCLASS = {10K50 (12A15)},
  MRNUMBER = {615132},
MRREVIEWER = {G\'erard\ Rauzy},
       DOI = {10.1017/S0004972700006894},
       URL = {https://doi.org/10.1017/S0004972700006894},
}

@book {folland,
    AUTHOR = {Folland, Gerald B.},
     TITLE = {Real analysis},
    SERIES = {Pure and Applied Mathematics (New York)},
   EDITION = {Second},
      NOTE = {Modern techniques and their applications,
              A Wiley-Interscience Publication},
 PUBLISHER = {John Wiley \& Sons, Inc., New York},
      YEAR = {1999},
     PAGES = {xvi+386},
      ISBN = {0-471-31716-0},
   MRCLASS = {00A05 (26-01 28-01 46-01)},
  MRNUMBER = {1681462},
}

\end{document}